\documentclass[11 pt]{amsart}
\usepackage{amscd,amsfonts,amssymb,amsmath}
\usepackage{hyperref}

\usepackage{tikz}
\usepackage[margin=3.7  cm]{geometry}

\newtheorem{theorem}{Theorem}[section]
\newtheorem{cor}[theorem]{Corollary}
\newtheorem{lemma}[theorem]{Lemma}
\newtheorem{prop}[theorem]{Proposition}

\theoremstyle{definition}

\numberwithin{equation}{subsection}
\theoremstyle{plain}

\newtheorem{problem}{Problem}

\def \Z{\mathbb Z}

\newcommand{\secref}[1]{Section~\ref{#1}}
\newcommand{\thmref}[1]{Theorem~\ref{#1}}
\newcommand{\lemref}[1]{Lemma~\ref{#1}}

\newcommand{\corref}[1]{Corollary~\ref{#1}}

\newcommand{\eqnref}[1]{~{\textrm(\ref{#1})}}
\numberwithin{equation}{section}

\begin{document}
\title[Commutator Subgroups of Virtual and Welded Braid Groups]{Commutator Subgroups of Virtual and Welded Braid Groups}
\author[V. G. Bardakov]{Valeriy G. Bardakov}
\author[K. Gongopadhyay]{Krishnendu Gongopadhyay}
\author[M. V. Neshchadim]{ Mikhail V. Neshchadim}
\address{Sobolev Institute of Mathematics and Novosibirsk State University, Novosibirsk 630090, Russia.}
\address{Department of AOI, Novosibirsk State Agrarian University, Dobrolyubova street, 160, Novosibirsk, 630039, Russia.}

\email{bardakov@math.nsc.ru}
\address{Indian Institute of Science Education and Research (IISER) Mohali, Sector 81,  S. A. S. Nagar, P. O. Manauli, Punjab 140306, India.}
\email{krishnendu@iisermohali.ac.in}
\address{Sobolev Institute of Mathematics and Novosibirsk State University, Novosibirsk 630090, Russia.}
\email{neshch@math.nsc.ru}
\subjclass[2010]{Primary 20F36; Secondary 20F05, 20F14}
\keywords{virtual braid, welded braid, commutator subgroup, perfect group}
\thanks{The authors  acknowledge partial support from grant of Russian Science Foundation, project N~16-41-02006 (from the DST project INT/RUS/RSF/P-2).}
\date{\today}

\begin{abstract}
Let $VB_n$, resp. $WB_n$ denote the virtual, resp. welded, braid group on $n$ strands. We study their commutator subgroups $VB_n' = [VB_n, VB_n]$ and, $WB_n' = [WB_n, WB_n]$ respectively.  We obtain a set of generators and defining relations for these commutator subgroups. In particular, we prove that $VB_n'$ is finitely generated if and only if $n \geq 4$, and  $WB_n'$ is finitely generated for $n \geq 3$.  Also we prove that $VB_3'/VB_3'' =\mathbb{Z}_3 \oplus \mathbb{Z}_3 \oplus\mathbb{Z}_3 \oplus \mathbb{Z}^{\infty}$,
$VB_4' / VB_4'' = \mathbb{Z}_3 \oplus \mathbb{Z}_3 \oplus \mathbb{Z}_3$,  $WB_3'/WB_3'' =
\mathbb{Z}_3 \oplus \mathbb{Z}_3 \oplus\mathbb{Z}_3 \oplus \mathbb{Z},$  $WB_4'/WB_4'' =
\mathbb{Z}_3,$ and for $n \geq 5$ the commutator subgroups $VB_n'$ and $WB_n'$ are perfect, i.~e. the commutator subgroup is equal to the second commutator subgroup.
\end{abstract}
\maketitle

\section{Introduction}

Virtual braid groups $VB_n$ on $n$ strands are certain extensions of the classical braid groups. It was introduced by L.~Kauffman \cite{lk1} (see also \cite{ve}). Virtual braids play the same role in the virtual knot theory that classical braids played in the classical knot theory. In particular, like closures of classical braids represent classical knots and links, the closure of virtual braids represent the virtual knots and links (see  \cite{kamada}, \cite{lk1}). On connections of virtual braids with the virtual knot theory, see \cite{BMN,BMN-1}. For a structure of the virtual braid groups, see \cite{B}.

The welded braid group $WB_n$ is a quotient of $VB_n$. This group is called the group of conjugating automorphisms \cite{Sav, Bar}, the braid-permutation group \cite{FRR} and so on. For several notions of this group and their equivalence, see \cite{dam}.

  The commutator subgroup $B_n'$ of the classical braid group $B_n$ is studied in the paper \cite{gl} (see also \cite{sa}). The following facts follow from these papers:

\medskip

--- $B_n'$ is finitely presented for all $n \geq 2$;

--- $B_3'$ is a free group of rank two;

--- $B_4'$ is a semi-direct product of two free groups of rank two;

--- for $n>4$ the second commutator subgroup $B_n''$ of $B_n$
coincides with the first commutator subgroup  $B_n'$, i.e.  $B_n'$ is perfect.

\medskip

In the present paper we investigate the commutator subgroups $VB_n'$  and $WB_n'$.  Our main result is the following.

\begin{theorem}\label{mainth}
The commutator subgroup $VB_3'$ is infinitely generated.
For $n \geq 4$ the commutator subgroup $VB_n'$ can be generated by $2n - 3$ elements.
\end{theorem}

\medskip
To prove \thmref{mainth} we obtain a presentation of $VB_n'$ using the classical method of Reidemeister-Schreier,  and then remove certain generators and relations using Tietze transformations. As a consequence of Theorem \ref{mainth}, we further have the following corollaries.

\begin{cor}\label{cor1}
\begin{enumerate} \item
	The quotient  $VB_3'/VB_3''$ is isomorphic to the direct product
$
\mathbb{Z}_3 \oplus \mathbb{Z}_3 \oplus\mathbb{Z}_3 \oplus \mathbb{Z}^{\infty},
$
where $\mathbb{Z}^{\infty}$ is the direct product of counting number of $\mathbb{Z}$.

 \item
	The quotient  $VB_4'/VB_4''$ is isomorphic to the direct product
	$\mathbb{Z}_3 \oplus  \mathbb{Z}_3 \oplus\mathbb{Z}_3$.

 \item	For $n\geq 5$, $VB_n'$ is perfect, that is  $VB_n'=VB_n''$.\end{enumerate}
\end{cor}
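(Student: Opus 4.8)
The plan is to observe that each of the three assertions is a statement about the abelian group $VB_n'/VB_n''$, which is simply the abelianisation of $VB_n'$. So I would start from the presentation of $VB_n'$ obtained in the proof of \thmref{mainth} — the generators and defining relators produced by Reidemeister--Schreier rewriting and then cleaned up by Tietze transformations — impose commutativity of the generators, and rewrite every defining relator additively. For $n\ge 4$ this reduces the problem to computing the Smith normal form of a finite integer matrix on $2n-3$ generators; for $n=3$ it is the same computation for a countably infinite presentation.

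For $n\ge 5$ the goal is to show that the abelianised relation matrix is unimodular, i.e.\ that the additive relators allow one to eliminate the $2n-3$ generators one after another until none remain, whence $VB_n'/VB_n''=0$ and $VB_n'$ is perfect. Morally this works because for $n\ge 5$ there are enough consecutive ``triangle'' relators — the images of $\sigma_i\sigma_{i+1}\sigma_i=\sigma_{i+1}\sigma_i\sigma_{i+1}$ and of the mixed relation $\rho_i\rho_{i+1}\sigma_i=\sigma_{i+1}\rho_i\rho_{i+1}$ — for their abelianisations to chain together and annihilate every generator, the same phenomenon that makes the symmetric-group quotient contribute nothing once $A_n$ is perfect.

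For $n=4$ the same elimination should stall before finishing: after row reduction two of the five generators get killed outright, while the remaining three satisfy a relation of the form $3x=0$, the ``$3$'' arising from the abelianised triangle relators (reflecting $A_4/A_4'\cong\mathbb Z_3$ together with two further $\mathbb Z_3$'s contributed by the mixed relations). Verifying that the Smith normal form is exactly $\mathrm{diag}(1,1,3,3,3)$ then gives $VB_4'/VB_4''\cong\mathbb Z_3\oplus\mathbb Z_3\oplus\mathbb Z_3$. For $n=3$ I would show that abelianising the infinite presentation leaves precisely three independent relations of type $3x=0$, while all the other Reidemeister--Schreier generators — indexed by the infinite transversal of $VB_3'$ in $VB_3$, in bijection with $VB_3/VB_3'\cong\mathbb Z\oplus\mathbb Z_2$ — span a free abelian group of countable rank modulo those three relations; a direct-limit / basis-chasing argument then identifies $VB_3'/VB_3''$ with $\mathbb Z_3\oplus\mathbb Z_3\oplus\mathbb Z_3\oplus\mathbb Z^{\infty}$.

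The hard part will be the bookkeeping around the torsion. One must check that abelianising the defining relators produces \emph{exactly} three independent relations of type $3x=0$ — no fourth one and nothing of type $9x=0$ — and, in the case $n=3$, organise the countably many surviving generators into a genuine free basis of the $\mathbb Z^{\infty}$ summand rather than a mere generating set. For $n\ge 4$ this is a finite linear-algebra computation over $\mathbb Z$; for $n=3$ it requires a limiting argument.
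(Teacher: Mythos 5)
Your overall strategy coincides with the paper's: both reduce each claim to abelianising the presentation of $VB_n'$ obtained from Reidemeister--Schreier plus Tietze reductions and then doing integral linear algebra. However, as written your proposal defers exactly the step on which everything turns, and the heuristic you offer for the critical case distinction points at the wrong relators. The collapse for $n\ge 5$ is \emph{not} driven by the chained ``triangle'' relators coming from $\sigma_i\sigma_{i+1}\sigma_i=\sigma_{i+1}\sigma_i\sigma_{i+1}$ and $\rho_i\rho_{i+1}\sigma_i=\sigma_{i+1}\rho_i\rho_{i+1}$ (those abelianise to relations that merely propagate generators along the index $m$, and they are equally present for $n=4$). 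What actually happens in the abelianisation is: the torsion relations $g_{m,3}^2=1$, $f_m^3=1$, $(f_mg_{m,3})^3=1$ force $g_{m,3}=1$; then $g_{m+1,i}a_m^{-1}g_{m,i}=1$ forces $a_m=1$; and the decisive relation is $(f_mg_{m,k})^2=1$ for $k>3$, which comes from the commuting relation $\rho_2\rho_k=\rho_k\rho_2$ and \emph{exists only when $n\ge 5$}. Combined with $f_m^3=1$ it gives $f_m=1$, after which the remaining relations kill the $c_i$. For $n=4$ there is no such $k$, the $f_m$ survive subject only to $f_m^3=1$ and $f_mf_{m+1}=f_{m+2}f_{m+3}$, and one gets exactly $\mathbb{Z}_3\oplus\mathbb{Z}_3\oplus\mathbb{Z}_3$. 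If you run your elimination guided by the $A_n$-perfectness analogy you will not find the reason the $n=4$ and $n\ge5$ answers differ.

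For $n=3$ there is a second point you should make precise: the $\mathbb{Z}^{\infty}$ summand is carried specifically by the generators $a_m$, and this requires checking that the one family of relators involving the $a_m$ (relation \eqref{17}) abelianises to a word in the $f_m$ alone (indeed to a consequence of the abelianised relation \eqref{16}), so that the $a_m$ are genuinely free in the quotient; no limiting or direct-limit argument is needed beyond this observation, since the abelianised presentation splits visibly as $\langle f_0,f_1,f_2\mid f_i^3=1\rangle\oplus\bigoplus_{m\in\mathbb{Z}}\langle a_m\rangle$. With these two corrections your plan becomes the paper's proof.
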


\medskip Recently the commutator subgroup $WB_n'$ of the welded braid group has been investigated by Zaremsky in \cite{mz}, who proved that $WB_n'$ is finitely presented if and only if $n \geq 4$.  Zaremsky proved this result using discrete Morse theory,  without constructing explicit finite presentation. Dey and Gongopadhyay \cite{dg} also proved that $WB_n'$ is finitely generated for all $n \geq 3$.  In the present paper we have found a better bound on the number of generators than  in \cite{dg}.  We prove the following result.

 \begin{theorem}\label{wbth}
\begin{enumerate}

\item
	The commutator subgroup $WB_n'$ can be generated by $n$ elements  for all $n \geq 4$, and $WB_3'$ can be generated by $4$ elements.

\item
The quotient  $WB_3'/WB_3''$ is isomorphic to the direct product
	$\mathbb{Z}_3 \oplus  \mathbb{Z}_3 \oplus\mathbb{Z}_3 \oplus \mathbb{Z}$.

\item
The quotient  $WB_4'/WB_4''$ is isomorphic to
	$\mathbb{Z}_3$.

\item	For $n\geq 5$, $WB_n'$ is perfect.
\end{enumerate}
\end{theorem}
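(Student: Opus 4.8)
The plan is to derive an explicit finite presentation of $WB_n'$ by the Reidemeister--Schreier method, exactly as in the proof of \thmref{mainth} for $VB_n'$, and then to obtain parts (2)--(4) by abelianising that presentation. First I would record the standard presentation of $WB_n$ on the braid generators $\sigma_1,\dots,\sigma_{n-1}$ and the permutation generators $\rho_1,\dots,\rho_{n-1}$, and check that in the abelianisation all $\sigma_i$ map to one generator $a$ of infinite order while all $\rho_i$ map to a single element $b$ with $2b=0$, so that $WB_n^{\mathrm{ab}}\cong\mathbb{Z}\oplus\mathbb{Z}_2$ and $WB_n' = \ker\bigl(WB_n\to\mathbb{Z}\oplus\mathbb{Z}_2\bigr)$. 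I would then fix the Schreier transversal $\{\sigma_1^{k}\rho_1^{\varepsilon}: k\in\mathbb{Z},\ \varepsilon\in\{0,1\}\}$, noting it is prefix-closed with respect to the generating set.

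Running the Reidemeister--Schreier rewriting on this transversal produces an (infinite) presentation of $WB_n'$: the generators are the nontrivial coset-generators, essentially the conjugates $\sigma_1^{k}\sigma_i\sigma_1^{-k-1}$, $\sigma_1^{k}\rho_i\rho_1\sigma_1^{-k}$, $\sigma_1^{k}(\rho_1\sigma_1\rho_1)\sigma_1^{-k-1}$ and so on, indexed by $k\in\mathbb{Z}$, and the relators are the transversal-conjugates of the braid relations, the symmetric-group relations, the mixed relations and the welded relation, rewritten in terms of these generators. The coset-generators coming from $\rho_1$ and $\sigma_1$ themselves are trivial by the choice of transversal, which already trims the list.

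The technical heart is a sequence of Tietze transformations eliminating all but finitely many of these generators and relators. As in the virtual case, the rewritten braid relation $\sigma_i\sigma_{i+1}\sigma_i=\sigma_{i+1}\sigma_i\sigma_{i+1}$ lets one express every generator of index $k$ through those of index $0$ and $1$, and then through a fixed finite list; the crucial new input is the welded relation, whose rewrites are absent for $VB_n$ and which is precisely what collapses the count from $2n-3$ down to $n$ (and down to $4$ for $n=3$, where fewer index-shifting relations are available). I expect this bookkeeping --- verifying that each eliminated generator is genuinely redundant, that no relation is lost, and that the surviving elements really generate --- to be the main obstacle, and the small cases $n=3,4$ will have to be treated separately because the pattern of index shifts degenerates there.

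Once the finite presentation $\langle g_1,\dots,g_m\mid R\rangle$ of $WB_n'$ with $m=n$ (or $m=4$ for $n=3$) is in hand, parts (2)--(4) reduce to a finite computation: abelianise, form the integer relation matrix, and compute its Smith normal form. For $n=3$ this yields $\mathbb{Z}_3\oplus\mathbb{Z}_3\oplus\mathbb{Z}_3\oplus\mathbb{Z}$; for $n=4$ it collapses to $\mathbb{Z}_3$; and for $n\geq 5$ the matrix has trivial cokernel, so $WB_n'/WB_n''=0$ and $WB_n'$ is perfect. A useful sanity check throughout is comparison with \corref{cor1}: passing from $VB_n$ to the welded quotient must kill the extra $\mathbb{Z}^{\infty}$ appearing for $n=3$ and two of the three $\mathbb{Z}_3$ summands appearing for $n=4$, which pins down how many independent relations the welded rewrites must contribute.
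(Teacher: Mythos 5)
Your proposal follows essentially the same route as the paper: Reidemeister--Schreier on the transversal $\{\sigma_1^{k}\rho_1^{\varepsilon}\}$, building the presentation of $WB_n'$ by adjoining the rewrites of the extra welded relation to the already-obtained presentation of $VB_n'$, eliminating generators by Tietze transformations, and abelianising for parts (2)--(3). The only notable divergence is part (4), where the paper avoids any Smith-normal-form computation by observing that $WB_n'$ is a quotient of $VB_n'$, which is already known to be perfect for $n\geq 5$ by \corref{cor1}; this shortcut is worth adopting, but your computational route would also succeed.
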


\medskip The presentation of $WB_n'$  obtained in this paper is  slightly different from the one obtained in \cite{dg}.  We obtain this presentation using the presentation of $VB_n$, while computing that we use successive conjugation rule in the rewritting process, see \lemref{l1}. This simple conjugation tirck has given an alternative presentation of $WB_n'$ where the elimination of generators become simpler, and consequently we get a better bound on the number of generators.

\bigskip

We now briefly describe the structure of the paper. We recall the necessary preliminaries in \secref{prel}.   Using Reidemeister-Schreier method, we first obtain a general presentation of $VB_n'$, see \thmref{nsp} in \secref{comv}.  We prove  \thmref{mainth} in \secref{pf}.  Because of the differences of the nature of the proofs for  $n\geq 4$ and $n=3$,   As the cases $n=3$ and $n \geq 4$ are different, accordingly, the proof of \thmref{mainth} is divided over two subsections. In \secref{vb4}, first, we apply Tietze transformations to remove certain generators from the presentation in \thmref{nsp}. This gives a  finite generating set for $VB_n'$ for $n \geq 4$.  In \secref{vb3},  we show that $VB_3'$ is infinitely generated. Combining these results, \thmref{mainth} is obtained. We prove \thmref{wbth} in \secref{wel}.

\bigskip Finally, we note  the following problems that remain to be answered.

\begin{problem}
Is it true that the commutator subgroup $VB_n'$ is not finitely presented for  $n \geq 4$.
\end{problem}

We expect the answer to be yes, but it is not clear how.

\begin{problem}
Construct explicit finite presentation of $WB_n'$ for $n \geq 3$.
\end{problem}

\begin{problem}
Let $G$ is a group from the set $\{ VB_3, VB_4, WB_3, WB_4 \}$. Find the quotients $G^{(i)} / G^{(i+1)}$, $i = 2, 3, \ldots$, where $G^{(k)}$ is the $k$-th commutator subgroup:
$$
G^{(1)} = G',~~G^{(k+1)} = [G^{(k)}, G^{(k)}], ~~~k = 1, 2, \ldots.
$$
\end{problem}

\section{Preliminaries} \label{prel}
\subsection{Group of Virtual Braids}
The virtual braid group of $n$ strands $VB_{n}$  is generated by the classical braid group
$B_n = \langle \sigma_1,$ $\ldots,$ $\sigma_{n-1} \rangle$
and the symmetric group $S_n = \langle\rho_1, \ldots,\rho_{n-1}\rangle$.
The generators $\sigma_{i}, i = 1, \ldots, n-1$
satisfy the relations
$$
\begin{array}{ccc}
  \sigma_i \sigma_j = \sigma_j \sigma_i                             & \,\, \mbox{for} \,\, & |i-j|\geq 2, \\
  \sigma_i \sigma_{i+1} \sigma_i= \sigma_{i+1} \sigma_i \sigma_{i+1}& \,\, \mbox{for} \,\, & i=1,...,n-2. \\

\end{array}
$$
The generators $\rho_{i}, i = 1, \ldots, n-1$  satisfy the relations of symmetric group
 $S_n$:
$$
\begin{array}{ccc}
 \rho_{i}^{2} = 1 & \,\, \mbox{for} \,\, & i = 1, 2, \ldots,n-1,\\
 \rho_{i}\rho_{j} = \rho_{j}\rho_{i} & \,\, \mbox{for} \,\, & |i-j|\geq 2,\\
 \rho_{i}\rho_{i+1}\rho_{i} = \rho_{i+1}\rho_{i}\rho_{i+1}& \,\, \mbox{for} \,\, & i = 1, 2 \ldots,n-2.\\
 \end{array}
$$
Other defining relations of $VB_{n}$ are mixed and have the form
$$
\begin{array}{ccc}
\sigma_{i}\rho_{j}= \rho_{j}\sigma_{i} & \,\, \mbox{for} \,\, & |i-j|\geq 2,\\
\rho_{i}\rho_{i+1}\sigma_{i} = \sigma_{i+1}\rho_{i}\rho_{i+1}& \,\, \mbox{for} \,\, & i = 1,2,\ldots,n-2.
\end{array}
$$

\medskip
\subsection{Reidemeister-Schreier Algorithm} Given a presentation of a group $G$, this algorithm allows one to find a presentation of a subgroup $H\subset G$. To obtain the presentation of $H$,   it is necessary to find a  Schreier's set of right coset of the group $G$ over the subgroup $H$. We give a formal description of this process, for more details see \cite{mks}.

\medskip
Let $a_1,\ldots,a_n$ be the generators of the group $G$ and $R_1,\ldots,R_m$ be
the set of defining relations for the given set of generators.
System of words $N=\left\{ K_\alpha,\, \alpha\in A  \right\}$
on generators $a_1,\ldots,a_n$  defines a Schreier's system for the subgroup $H\subset G$
relative to the system of generators $a_1,\ldots,a_n$
if the next conditions are satisfied:

\medskip 1) in every right coset of the group $G$ over $H$
there is only one word from the system $N$;

\medskip 2) if the word
$K_\alpha=a_{i_1}^{\varepsilon_1}\ldots a_{i_{p-1}}^{\varepsilon_{p-1}}a_{i_p}^{\varepsilon_p}$,
$(\varepsilon_j=\pm 1)$ lies in $N$,
then the word $a_{i_1}^{\varepsilon_1}\ldots a_{i_{p-1}}^{\varepsilon_{p-1}}$
also lies in $N$.

\medskip Suppose that some
Schreier's system $N$ is chosen for the subgroup $H\subset G$
relative to the system generators $a_1,\ldots,a_n$ of $G$.
For every word $Q$ on $a_1,\ldots,a_n$, we denote by $\overline{Q}$
the only word from $N$ which lies in the same right coset of $G$ over the subgroup $H$.
Denote
$$
S_{K_\alpha, a_\nu}=K_\alpha a_\nu \cdot (\overline{K_\alpha a_\nu})^{-1},
\quad \alpha\in A,\,\,\nu=1,\ldots,n.
$$
Theorem of Reidemeister-Schreier states that the elements $S_{K_\alpha, a_\nu}$
generate subgroup $H$ and
the set of defining relations for this set of generators
is divided in two parts.
First part consists of trivial relations $S_{K_\alpha, a_\nu}=1$,
where the pair $K_\alpha$, $a_\nu$ is such that the word
$K_\alpha a_\nu \cdot (\overline{K_\alpha a_\nu})^{-1}$
is freely equivalent to the word 1.
Second part consists of all relations of the form $\tau(K_\alpha R_\mu K_\alpha^{-1})$,
where $\alpha\in A$, $\mu=1,\ldots,m$, and $\tau$ is Reidemeister's transformation,
which  maps every nonempty word
$a_{i_1}^{\varepsilon_1}\ldots a_{i_p}^{\varepsilon_p}$, $(\varepsilon_j=\pm 1)$
from symbols $a_1,\ldots,a_n$ to the word from symbols $S_{K_\alpha, a_\nu}$
by the rule:
$$
\tau (a_{i_1}^{\varepsilon_1}\ldots a_{i_p}^{\varepsilon_p})=
 S_{K_{i_1}, a_{i_1}}^{\varepsilon_1} \ldots S_{K_{i_p}, a_{i_p}}^{\varepsilon_p},
$$
where $K_{i_j}=\overline{a_{i_1}^{\varepsilon_1}\ldots a_{i_{j-1}}^{\varepsilon_{j-1}}}$,
if $\varepsilon_j=1$, and
$K_{i_j}=\overline{a_{i_1}^{\varepsilon_1}\ldots a_{i_{j}}^{\varepsilon_{j}}}$,
if $\varepsilon_j=-1$.

\bigskip

\section{Commutator subgroup $VB_n'$} \label{comv}

\subsection{Generating set of $VB_n'$} \label{gen}
From the above relations it follows that the quotient  $VB_n/VB_n'$ is isomorphic to the direct product
$\mathbb{Z}\times \mathbb{Z}_2$. One can define the map $\varphi$ from the following short exact sequence:
\begin{equation*}\label{se1}1 \xrightarrow {} VB_n' \xrightarrow{} VB_n \xrightarrow{\varphi} \Z \times \Z_2 \xrightarrow{} 1\end{equation*}
where, for $i=1, \ldots, n-1$, $\varphi(\sigma_i)$ is the generator of $\Z$ and  $\phi(\rho_i)$ is the generator of  $\Z_2$ respectively when viewing it as $VB_n/VB_n'$.  The map $\varphi$ does have a section in the above short exact sequence for $n \geq 3$,  and  $\ker \varphi = VB_n'$.

\medskip
As a Schreier set of coset representatives of  $VB_n$ by $VB_n'$ take the words
$$
\Lambda=\left\{\,  \sigma_1^i\rho_1^\varepsilon \,|\, i\in
\mathbb{Z},\,\, \varepsilon=0,1 \,\right\}.
$$

\medskip The commutator subgroup $VB_n'$ is generated by the words
$$
S_{\lambda,a}=\lambda a (\overline{\lambda a})^{-1},\quad \lambda
\in \Lambda,\quad
 a \in \{ \sigma_1, \ldots, \sigma_{n-1}, \rho_1, \ldots, \rho_{n-1} \}.
$$
Here  $\overline{w}$ is a coset representative in $\Lambda$ of
$wVB_n'$. Find the elements $S_{\lambda,a}$. For this put $\lambda = \sigma_1^i\rho_1^\varepsilon$ and considering different $a$ we will get the following cases:

1)  If $a=\sigma_1$, then
$$
S_{\lambda,\sigma_1}=\sigma_1^i\rho_1^\varepsilon \sigma_1 (\sigma_1^{i+1}\rho_1^\varepsilon)^{-1}.
$$
For  $\varepsilon=0$ we have $S_{\lambda,\sigma_1}=1$
and for $\varepsilon=1$ we have
$S_{\lambda,\sigma_1}=\sigma_1^i (\rho_1 \sigma_1 \rho_1 \sigma_1^{-1} ) \sigma_1^{-i}$, which we will denote by $a_i$.

\medskip 2) If $a=\sigma_2$, then
$$
S_{\lambda,\sigma_2}=
\sigma_1^i (\rho_1^\varepsilon\sigma_2\rho_1^\varepsilon\sigma_1^{-1})\sigma_1^{-i},
$$
and we will denote this element by $b_{i,\varepsilon}$.

\medskip  3)  If $a=\sigma_l$, $l>2$, then
$$
S_{\lambda,\sigma_l}=\sigma_l\sigma_1^{-1},
$$
and we will denote this element by $c_l$.

\medskip  4) If $a=\rho_1$, then
$$
S_{\lambda,\rho_1}=1.
$$

\medskip  5) If  $a=\rho_2$, then
$$
S_{\lambda,\rho_2}=
\sigma_1^i (\rho_1^\varepsilon\rho_2\rho_1^{\varepsilon+1})\sigma_1^{-i},
$$
and we will denote this element by $f_{i,\varepsilon}$.

\medskip  6) If  $a=\rho_l$, $l>2$, then
$$
S_{\lambda,\rho_l}=\sigma_1^i (\rho_l\rho_1)\sigma_1^{-i},
$$
and we will denote this element by $g_{i,l}$.

To find defining relations of $VB_n'$ we will use the following  conjugation rules by elements $\rho_1$ and $\sigma_1^{-m}$.


\begin{lemma} \label{l1}
The following formulas hold
\begin{itemize}
\item[(1)] $a_i^{\sigma_1^{-m}}=a_{i+m}, \quad
b_{i,\varepsilon}^{\sigma_1^{-m}}=b_{i+m,\varepsilon}, \quad
c_l^{\sigma_1^{-m}}=c_l, \quad
f_{i,\varepsilon}^{\sigma_1^{-m}}=f_{i+m,\varepsilon}, \quad
g_{i,\varepsilon}^{\sigma_1^{-m}}=g_{i+m,\varepsilon};$

\item[(2)] $a_0^{\rho_1}=a_0^{-1}, \quad b_{0,0}^{\rho_1}=b_{0,1} a_0^{-1}, \quad
b_{0,1}^{\rho_1}=b_{0,0}a_0^{-1}, \quad b_{1,0}^{\rho_1}=a_0 b_{1,1} a_1^{-1} a_0^{-1}, \quad b_{2,0}^{\rho_1}=a_0 a_1 (b_{2,1} a_2^{-1}) a_1^{-1} a_0^{-1};$

\item[(3)] $c_l^{\rho_1}=c_l a_0^{-1}, \quad f_{0,0}^{\rho_1}=f_{0,1}, \quad f_{0,1}^{\rho_1}=f_{0,0}, \quad f_{1,0}^{\rho_1}=a_{0}f_{1,1}a_0^{-1}, \quad
f_{1,1}^{\rho_1}=a_{0}f_{1,0}a_0^{-1};$

\item[(4)] $g_{0,i}^{\rho_1}=g_{0,i}, \quad g_{1,i}^{\rho_1}=a_{0}g_{1,i}a_0^{-1}, \quad i > 2.$
\end{itemize}
\end{lemma}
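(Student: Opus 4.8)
\textbf{Proof plan for \lemref{l1}.}
The plan is to verify each of the four families of conjugation formulas directly from the definitions of the generators $a_i$, $b_{i,\varepsilon}$, $c_l$, $f_{i,\varepsilon}$, $g_{i,l}$ given in the six cases above, using only the defining relations of $VB_n$. Throughout I will write $x^y = y^{-1}xy$ and use repeatedly that $\sigma_1^i \rho_1^\varepsilon$ are the chosen coset representatives, so that each generator has the normal form $\sigma_1^i(\cdots)\sigma_1^{-i}$ with the bracketed factor a word in $\sigma_1,\sigma_2,\rho_1,\rho_2$ (and for $c_l,g_{i,l}$ also $\sigma_l,\rho_l$) lying in $VB_n'$.

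Part (1) is essentially formal: each generator is of the shape $\sigma_1^i w \sigma_1^{-i}$ for a fixed word $w$ independent of $i$ (for $a_i$ one has $w = \rho_1\sigma_1\rho_1\sigma_1^{-1}$, for $b_{i,\varepsilon}$ one has $w = \rho_1^\varepsilon \sigma_2 \rho_1^\varepsilon \sigma_1^{-1}$, etc.), so conjugating by $\sigma_1^{-m}$ — i.e. multiplying by $\sigma_1^m$ on the left and $\sigma_1^{-m}$ on the right — simply shifts the index $i \mapsto i+m$; for $c_l = \sigma_l\sigma_1^{-1}$ the shift is trivial since $l>2$ makes $\sigma_l$ commute with $\sigma_1$, giving $c_l^{\sigma_1^{-m}} = \sigma_1^m \sigma_l \sigma_1^{-1}\sigma_1^{-m} = \sigma_l\sigma_1^{-1} = c_l$. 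So Part (1) reduces to unwinding the normal forms, with no braid relations needed beyond the commuting relation for $c_l$.

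Parts (2), (3), (4) are the substantive computations. Here one conjugates the normal form $\sigma_1^i(\cdots)\sigma_1^{-i}$ by $\rho_1$; since $\rho_1$ does not commute with $\sigma_1$, the key step is to push $\rho_1$ through $\sigma_1^{\pm 1}$ using the relation $\rho_1\sigma_1\rho_1 = $ (a word one solves from $a_0 = \rho_1\sigma_1\rho_1\sigma_1^{-1}$, namely $\rho_1\sigma_1\rho_1 = a_0\sigma_1$), together with $\rho_1^2 = 1$, the commuting relations $\sigma_1\rho_2$-type relations are not available but $\rho_1$ commutes with $\sigma_l,\rho_l$ for $l>2$, and the mixed braid relation $\rho_1\rho_2\sigma_1 = \sigma_2\rho_1\rho_2$. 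For instance, for $b_{0,0}^{\rho_1}$ one computes $\rho_1(\rho_1^0\sigma_2\rho_1^0\sigma_1^{-1})\rho_1 = \rho_1\sigma_2\sigma_1^{-1}\rho_1$; inserting $\rho_1\rho_1 = 1$ and rewriting $\rho_1\sigma_1^{-1}\rho_1 = \sigma_1^{-1}a_0^{-1}$ (the inverse of the relation above, suitably conjugated) yields $(\rho_1\sigma_2\rho_1)(\rho_1\sigma_1^{-1}\rho_1)$; then $\rho_1\sigma_2\rho_1$ is by definition $b_{0,1}\sigma_1$ after comparing normal forms, and collecting terms produces $b_{0,1}a_0^{-1}$ as claimed. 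The cases $b_{1,0},b_{2,0}$ involve first applying the $i\mapsto$ appropriate shift from Part (1) to reduce to $b_{0,0}$-type manipulations but with extra $\sigma_1$-conjugations surviving, which is exactly where the $a_0$, $a_0a_1$ prefactors come from; the $f$- and $g$-cases are analogous but easier, since $f_{i,\varepsilon}$ only involves $\rho$'s and $\rho_1$ commutes through much of $g_{i,l}$ for $l>2$.

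The main obstacle is purely bookkeeping: keeping the normal forms straight while pushing $\rho_1$ across powers of $\sigma_1$, and correctly identifying the resulting words with products of the named generators $a_i,b_{i,\varepsilon},f_{i,\varepsilon}$ — in particular making sure the index shifts and the $a_0^{\pm1}$ "correction terms" come out with the right exponents. I expect no conceptual difficulty; the only care needed is to consistently use $\rho_1\sigma_1\rho_1 = a_0\sigma_1$ and its conjugates $\rho_1\sigma_1^{-k}\rho_1 = \sigma_1^{-k}(a_0 a_1\cdots a_{k-1})^{-1}$ (which itself follows by induction from Part (1)), and for the $b_{i,0}^{\rho_1}$ line the mixed relation expressing $\rho_1\sigma_2\rho_1$ in terms of $\sigma_1$, $\sigma_2$ and $\rho$'s. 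Each of the roughly fifteen identities is then a short rewrite, and I would simply present them grouped as in the statement.
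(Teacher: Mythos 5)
Your plan is correct and matches the paper's proof in substance: both verify each identity by expanding the Schreier generators as explicit words in the $\sigma_i,\rho_i$ and rewriting the conjugate $\rho_1 x\rho_1$ (resp.\ $\sigma_1^{m} x\sigma_1^{-m}$) back into the named generators. The only cosmetic difference is that the paper performs the rewriting via the formal Reidemeister function $\tau$, whereas you do it by hand by inserting $\rho_1^2=1$ and using $\rho_1\sigma_1\rho_1=a_0\sigma_1$ together with its powers $\rho_1\sigma_1^{k}\rho_1=a_0\cdots a_{k-1}\sigma_1^{k}$; the resulting identities (e.g.\ $b_{0,0}^{\rho_1}=b_{0,1}a_0^{-1}$, $b_{2,0}^{\rho_1}=a_0a_1 b_{2,1}a_2^{-1}a_1^{-1}a_0^{-1}$) come out identically.
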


\begin{proof} (1) follow from the definition.

For proving (2) note that:
$$
\rho_1 a_0 \rho_1 = \rho_1 \rho_1 \sigma_1 \rho_1 \sigma_1^{-1} \rho_1=
S_{1,\sigma_1}S_{\sigma_1,\rho_1}S_{\rho_1,\sigma_1}^{-1}S_{\rho_1,\rho_1}=a_0^{-1},
$$
$$
\rho_1 b_{0,0} \rho_1 =\rho_1 \sigma_2 \sigma_1^{-1} \rho_1=
S_{1,\rho_1} S_{\rho_1,\sigma_2} S_{\rho_1,\sigma_1}^{-1}S_{\rho_1,\rho_1}=b_{0,1} a_0^{-1},
$$
$$
b_{0,1}^{\rho_1}=\rho_1 \rho_1 \sigma_2 \rho_1 \sigma_1^{-1} \rho_1=
S_{1,\sigma_2}S_{\sigma_1,\rho_1}S_{\rho_1,\sigma_1}^{-1}S_{\rho_1,\rho_1}
=b_{0,0}a_0^{-1},
$$
$$
\rho_1 b_{1,0} \rho_1 =\rho_1 \sigma_1\sigma_2 \sigma_1^{-1}\sigma_1^{-1} \rho_1=
S_{1,\rho_1} S_{\rho_1,\sigma_1} S_{\sigma_1\rho_1,\sigma_2}
S_{\sigma_1\rho_1,\sigma_1}^{-1}S_{\rho_1,\sigma_1}^{-1}=a_0 b_{1,1} a_1^{-1} a_0^{-1}.
$$
$$
\rho_1 b_{2,0} \rho_1 =\rho_1  \sigma_1^{2} \sigma_2 (\sigma_1^{-1})^3 \rho_1=
$$
$$
=S_{1,\rho_1} S_{\rho_1,\sigma_1} S_{\sigma_1\rho_1,\sigma_1}
S_{\sigma_1^2\rho_1,\sigma_2}S_{\sigma_1^2\rho_1,\sigma_1}^{-1} S_{\sigma_1\rho_1,\sigma_1}^{-1}
S_{\rho_1,\sigma_1}^{-1}S_{\rho_1,\rho_1}=a_0 a_1 (b_{2,1} a_2^{-1}) a_1^{-1} a_0^{-1}.
$$

For (3):
$$
\rho_1 c_l \rho_1 =\rho_1 \sigma_l \sigma_1^{-1} \rho_1=
S_{1,\rho_1} S_{\rho_1,\sigma_l} S_{\rho_1,\sigma_1}^{-1}S_{\rho_1,\rho_1}=c_l a_0^{-1},
$$
$$
f_{0,0}^{\rho_1}=\rho_1 \rho_2\rho_1\rho_1=\rho_1 \rho_2=f_{0,1},
$$
$$
f_{0,1}^{\rho_1}= \rho_1\rho_1 \rho_2\rho_1= \rho_2 \rho_1= f_{0,0},
$$
$$
f_{1,0}^{\rho_1}=\rho_1  \sigma_1 \rho_2 \rho_1 \sigma_1^{-1} \rho_1=
S_{1,\rho_1}S_{\rho_1,\sigma_1}S_{\sigma_1\rho_1,\rho_2}S_{\sigma_1,\rho_1}
S_{\rho_1,\sigma_1}^{-1}S_{\rho_1,\rho_1}
=a_{0}f_{1,1}a_0^{-1}.
$$
$$
f_{1,1}^{\rho_1}=
\rho_1  \sigma_1 \rho_1 \rho_{2} \sigma_{1}^{-1}\rho_1 =
S_{1,\rho_1} S_{\rho_1,\sigma_1}S_{\sigma_1\rho_1,\rho_1} S_{\sigma_1,\rho_2}
S_{\rho_1,\sigma_1}^{-1}S_{\rho_1,\rho_1}
=a_{0}f_{1,0}a_0^{-1}.
$$

(4):
$$
g_{0,i}^{\rho_1}=  \rho_1 \rho_i\rho_1\rho_1 =\rho_1 \rho_i=g_{0,i},
$$
$$
g_{1,i}^{\rho_1}=\rho_1 \sigma_1 \rho_i \rho_1 \sigma_1^{-1} \rho_1=
S_{1,\rho_1} S_{\rho_1,\sigma_1} S_{\sigma_1\rho_1,\rho_i}S_{\sigma_1,\rho_1}
S_{\rho_1,\sigma_1}^{-1}S_{\rho_1,\rho_1}=a_{0}g_{1,i}a_0^{-1},  \quad i > 2.
$$
This proves the lemma.
\end{proof}


\subsection{Defining Relations in $VB_n'$} In this subsection we will consider the defining relations of $VB_n$, rewrite them in the generators of $VB_n'$, and conjugating by elements $\lambda \in \Lambda$, we get the defining relations of $VB_n'$.

\subsubsection{Defining relation of $VB_n'$ that follow from the relation $\sigma_i \sigma_j  = \sigma_j \sigma_i$} Rewrite this relation in the form
$$
r_1=\sigma_i \sigma_j  \sigma_i^{-1} \sigma_j^{-1}, ~1\leq i< j\leq n-1, ~i+1<j.
$$
Using the rewritting process we get:

for $i=1$:
$r_1=\sigma_1 \sigma_j  \sigma_1^{-1} \sigma_j^{-1}=
S_{1,\sigma_1} S_{\sigma_1,\sigma_j} S_{\sigma_1,\sigma_1}^{-1}S_{\sigma_1,\sigma_j}^{-1}=c_j c_j^{-1}=1;$

\medskip for $i=2$:
$r_1=\sigma_2 \sigma_j  \sigma_2^{-1} \sigma_j^{-1}=
S_{1,\sigma_2} S_{\sigma_1,\sigma_j} S_{\sigma_1,\sigma_2}^{-1}S_{\sigma_1,\sigma_j}^{-1}=
b_{0,0}c_j b_{1,0}^{-1}c_j^{-1};$

\medskip for  $i>2$:
$r_1=\sigma_i \sigma_j  \sigma_i^{-1} \sigma_j^{-1}=
S_{1,\sigma_i} S_{\sigma_1,\sigma_j} S_{\sigma_1,\sigma_i}^{-1}S_{\sigma_1,\sigma_j}^{-1}=
c_ic_j c_i^{-1}c_j^{-1}$.


\begin{lemma} \label{l3}
The following four types of relations in $VB_n'$ follow from the relation $r_1$ of $VB_n$:
$$
b_{m,0} c_j b_{m+1,0}^{-1} c_j^{-1} = 1, \quad j \geq 4,
$$
$$
c_i c_j c_i^{-1} c_j^{-1} = 1, \quad i \geq 3,  \quad j > i+1
$$
$$
b_{m,1} a_m^{-1} c_j a_{m+1} b_{m+1,1}^{-1} c_j^{-1} = 1, \quad j \geq 4,
$$
$$
c_i a_m^{-1} c_j c_i^{-1} a_m c_j^{-1} = 1, \quad i \geq 3, \quad j > i+1.
$$
\end{lemma}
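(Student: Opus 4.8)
The plan is to extract the four families straight from the Reidemeister-Schreier data. The relations of $VB_n'$ produced by the relator family $r_1=\sigma_i\sigma_j\sigma_i^{-1}\sigma_j^{-1}$ ($1\le i<j\le n-1$, $i+1<j$) are the words $\tau(\lambda r_1\lambda^{-1})$ with $\lambda$ ranging over the transversal $\Lambda=\{\sigma_1^m\rho_1^{\varepsilon}\}$. The case $\lambda=1$ is already carried out above: $\tau(r_1)$ is freely trivial for $i=1$, it equals $b_{0,0}c_jb_{1,0}^{-1}c_j^{-1}$ for $i=2$, and it equals $c_ic_jc_i^{-1}c_j^{-1}$ for $i\ge 3$. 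So there are two non-trivial base relators, and the job is to propagate them over all of $\Lambda$.

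I would then handle a general $\lambda\in\Lambda$ by pushing the conjugation by $\lambda$ through the base relator --- the ``successive conjugation'' device of \lemref{l1} advertised in the introduction. For $\lambda=\sigma_1^m$ this is part (1) of \lemref{l1}, which simply adds $m$ to every subscript and fixes each $c_l$; applied to the two base relators it gives the first family $b_{m,0}c_jb_{m+1,0}^{-1}c_j^{-1}=1$ and the second family $c_ic_jc_i^{-1}c_j^{-1}=1$. For $\lambda=\sigma_1^m\rho_1$ one first conjugates by $\rho_1$ using parts (2)--(4) of \lemref{l1} and then shifts by $\sigma_1^{-m}$ using part (1). Substituting $b_{0,0}^{\rho_1}=b_{0,1}a_0^{-1}$, $c_l^{\rho_1}=c_la_0^{-1}$, $b_{1,0}^{\rho_1}=a_0b_{1,1}a_1^{-1}a_0^{-1}$ into the four syllables of the $i=2$ base relator and cancelling the two interior $a_0^{-1}a_0$ pairs leaves $b_{0,1}a_0^{-1}c_ja_1b_{1,1}^{-1}c_j^{-1}$, hence the third family after the shift; the same substitution applied to $c_ic_jc_i^{-1}c_j^{-1}$, using only $c_l^{\rho_1}=c_la_0^{-1}$, leaves $c_ia_0^{-1}c_jc_i^{-1}a_0c_j^{-1}$, hence the fourth family. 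The $i=1$ base relator being trivial, it contributes nothing; and the index restrictions $j\ge 4$ (resp. $i\ge 3$, $j>i+1$) in the four families are exactly the defining inequality $i+1<j$ of the family $r_1$ specialised to $i=2$ (resp. $i\ge 3$), together with $l>2$, which is what is needed for $c_l$ to be defined.

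The only delicate point, and the one I expect to be the real obstacle, is the bookkeeping in the $\rho_1$-conjugation step: one must remember that $x^{-1}$ conjugates to $(x^{\rho_1})^{-1}$, so the auxiliary $a_k^{\pm1}$ factors coming from \lemref{l1} reappear in reversed order at the opposite end of the relator, and one then has to check syllable by syllable that exactly the right neighbouring pairs telescope and nothing else survives. Everything past that is a routine substitution.
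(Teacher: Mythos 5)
Your proposal is correct and follows essentially the same route as the paper: take the two nontrivial base relators produced by the rewriting process, conjugate them by $\rho_1$ via \lemref{l1}, and then shift indices by conjugating with $\sigma_1^{-m}$. Your syllable-by-syllable bookkeeping is right; in fact your result $c_i a_0^{-1} c_j c_i^{-1} a_0 c_j^{-1}$ matches the lemma's statement, whereas the paper's displayed intermediate expression contains a sign typo ($a_0^{-1}$ where $a_0$ should appear).
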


\begin{proof} Conjugating of $r_1$ by $\rho_1$ and using  Lemma \ref{l1} we get
$$
\rho_1 (b_{0,0}c_j b_{1,0}^{-1}c_j^{-1}) \rho_1 = b_{0,1}a_0^{-1}c_j a_{1}b_{1,1}^{-1}c_j^{-1},
$$
$$
\rho_1 (c_ic_j c_i^{-1}c_j^{-1}) \rho_1=c_ia_0^{-1}c_j c_i^{-1}a_0^{-1}c_j^{-1}.
$$
Conjugating $r_1$ and $\rho_1 r_1 \rho_1$ by $\sigma_1^{-m}$, we get the need relations.
\end{proof}


\subsubsection{Defining relations of $VB_n'$ that follow from the relation $\sigma_i \sigma_{i+1} \sigma_i = \sigma_{i+1} \sigma_i \sigma_{i+1}$} $\;$
Rewrite this relation in the form
$$
r_2=\sigma_i \sigma_{i+1} \sigma_i \sigma_{i+1}^{-1} \sigma_i^{-1} \sigma_{i+1}^{-1}.
$$

Then

for $i=1$:
$r_2=\sigma_1 \sigma_{2} \sigma_1 \sigma_{2}^{-1} \sigma_1^{-1} \sigma_{2}^{-1}=
S_{1,\sigma_1} S_{\sigma_1,\sigma_2} S_{\sigma_1^2,\sigma_1} S_{\sigma_1^2,\sigma_1}^{-1}
S_{\sigma_1,\sigma_1}^{-1}S_{1,\sigma_2}^{-1}=b_{1,0}b_{2,0}^{-1}b_{0,0}^{-1}$.

\medskip for $i=2$:
$r_2=\sigma_2 \sigma_{3} \sigma_2 \sigma_{3}^{-1} \sigma_2^{-1} \sigma_{3}^{-1}=
S_{1,\sigma_2} S_{\sigma_1,\sigma_3} S_{\sigma_1^2,\sigma_2} S_{\sigma_1^2,\sigma_3}^{-1}
S_{\sigma_1,\sigma_2}^{-1}S_{1,\sigma_3}^{-1}=b_{0,0}c_3b_{2,0}c_3^{-1}b_{1,0}^{-1}c_3^{-1}$.

\medskip
for $i>2$:
$r_2=\sigma_i \sigma_{i+1} \sigma_i \sigma_{i+1}^{-1} \sigma_i^{-1} \sigma_{i+1}^{-1}=
S_{1,\sigma_i} S_{\sigma_1,\sigma_{i+1}} S_{\sigma_1^2,\sigma_i} S_{\sigma_1^2,\sigma_{i+1}}^{-1}
S_{\sigma_1,\sigma_i}^{-1}S_{1,\sigma_{i+1}}^{-1}=c_i c_{i+1} c_i c_{i+1}^{-1} c_i^{-1} c_{i+1}^{-1}$.

\begin{lemma} \label{l5}
The following six types of relations in $VB_n'$ follow from the relation $r_2$ of $VB_n$:
$$
b_{m+1,0}b_{m+2,0}^{-1}b_{m,0}^{-1}=1,
$$
$$
b_{m,0}c_3b_{m+2,0}c_3^{-1}b_{m+1,0}^{-1}c_3^{-1}=1,
$$
$$
c_i c_{i+1} c_i c_{i+1}^{-1} c_i^{-1} c_{i+1}^{-1}=1, \quad i \geq 3,
$$
$$
a_mb_{m+1,1}a_{m+2}b_{m+2,1}^{-1}a_{m+1}^{-1}b_{m,1}^{-1}=1,
$$
$$
b_{m,1}a_m^{-1}c_3 a_{m+1}b_{m+2,1}a_{m+2}^{-1}a_{m+1}^{-1} c_3^{-1}a_m a_{m+1} b_{m+1,1}^{-1}c_3^{-1}=1,
$$
$$
c_i a_m^{-1}c_{i+1} a_m^{-1}c_i c_{i+1}^{-1} a_m c_i^{-1} a_m c_{i+1}^{-1}=1,\quad i \geq 3.
$$
\end{lemma}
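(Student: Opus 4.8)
The plan is to follow verbatim the scheme used for \lemref{l3}. The Reidemeister--Schreier rewriting of the braid relation $r_2$ has already been carried out in the three cases $i=1$, $i=2$, $i>2$, yielding the words
$$
b_{1,0}b_{2,0}^{-1}b_{0,0}^{-1},\qquad b_{0,0}c_3b_{2,0}c_3^{-1}b_{1,0}^{-1}c_3^{-1},\qquad c_ic_{i+1}c_ic_{i+1}^{-1}c_i^{-1}c_{i+1}^{-1}\ (i>2),
$$
each of which equals $1$ in $VB_n'$. Setting them equal to $1$ gives, respectively, the first, second and third relations of the lemma at $m=0$; conjugating by $\sigma_1^{-m}$ and invoking \lemref{l1}(1) — which shifts the indices of the $a$'s, $b$'s, $f$'s, $g$'s by $m$ and fixes the $c$'s — produces these three relations for every $m\in\Z$ (the third being already independent of $m$).

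For the remaining three relations I would conjugate each of the three rewritten words by $\rho_1$, substituting the formulas of \lemref{l1}(2)--(4) and cancelling freely. For the $i=1$ word, using $b_{1,0}^{\rho_1}=a_0b_{1,1}a_1^{-1}a_0^{-1}$, $b_{2,0}^{\rho_1}=a_0a_1(b_{2,1}a_2^{-1})a_1^{-1}a_0^{-1}$ and $b_{0,0}^{\rho_1}=b_{0,1}a_0^{-1}$ collapses the conjugate to $a_0b_{1,1}a_2b_{2,1}^{-1}a_1^{-1}b_{0,1}^{-1}$, i.e. the fourth relation at $m=0$. For the $i=2$ word, using in addition $c_3^{\rho_1}=c_3a_0^{-1}$, one obtains after cancellation $b_{0,1}a_0^{-1}c_3a_1b_{2,1}a_2^{-1}a_1^{-1}c_3^{-1}a_0a_1b_{1,1}^{-1}c_3^{-1}$, the fifth relation at $m=0$. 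For the $i>2$ word, using only $c_l^{\rho_1}=c_la_0^{-1}$, the conjugate collapses to $c_ia_0^{-1}c_{i+1}a_0^{-1}c_ic_{i+1}^{-1}a_0c_i^{-1}a_0c_{i+1}^{-1}$, the sixth relation at $m=0$. A final conjugation by $\sigma_1^{-m}$, again via \lemref{l1}(1), upgrades these three to arbitrary $m$, completing the list.

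All the manipulations are elementary cancellations in the free group on $a_i,b_{i,\varepsilon},c_l,f_{i,\varepsilon},g_{i,l}$, so there is no conceptual difficulty. The only point demanding care is the telescoping of the blocks $\cdots a_0^{-1}$ and $a_0\cdots$ that the conjugation rules of \lemref{l1}(2)--(3) insert in the interior of a word — most acutely in the $i=2$ case, where the three distinct $\rho_1$-images of $b_{0,0}$, $b_{2,0}$, $b_{1,0}$ and the image of $c_3$ all have to mesh. I therefore expect the bookkeeping of these $a_0^{\pm1}$ cancellations to be the main (indeed the only) obstacle, exactly as in the proof of \lemref{l3}.
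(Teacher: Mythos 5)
Your proposal is correct and follows essentially the same route as the paper: set the three rewritten words for $r_2$ equal to $1$, conjugate by $\rho_1$ using the rules of \lemref{l1} (the stated $\rho_1$-images and the resulting $a_0^{\pm1}$-cancellations all check out, e.g. $a_0b_{1,1}a_2b_{2,1}^{-1}a_1^{-1}b_{0,1}^{-1}$ for the $i=1$ case), and then shift by $\sigma_1^{-m}$ via \lemref{l1}(1). No gaps.
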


\begin{proof}
Conjugating  $r_2$ by $\rho_1$
and using Lemma \ref{l1}, we get
$$
(b_{1,0}b_{2,0}^{-1}b_{0,0}^{-1})^{\rho_1}=
a_0b_{1,1}a_{2}b_{2,1}^{-1}a_{1}^{-1}b_{0,1}^{-1},
$$
$$
(b_{0,0}c_3b_{2,0}c_3^{-1}b_{1,0}^{-1}c_3^{-1})^{\rho_1}=
b_{0,1}a_0^{-1}c_3 a_{1}b_{2,1}a_{2}^{-1}a_{1}^{-1} c_3^{-1}a_0 a_{1} b_{1,1}^{-1}c_3^{-1},
$$
$$
(c_i c_{i+1} c_i c_{i+1}^{-1} c_i^{-1} c_{i+1}^{-1})^{\rho_1}=
c_i a_0^{-1}c_{i+1} a_0^{-1}c_i c_{i+1}^{-1} a_0 c_i^{-1} a_0 c_{i+1}^{-1}.
$$
Conjugating $r_2$ and $\rho_1 r_2 \rho_1$ by $\sigma_1^{-m}$, we get the need relations.
\end{proof}

\subsubsection{Defining relation that follow from the relation $\rho_i^2 = 1$}

Rewrite this relation in the form
$$ r_3=\rho_i^2.$$

Then

for  $i=1$: $r_3=\rho_1 \rho_1 =S_{1,\rho_1} S_{\rho_1,\rho_1}=1$.

\medskip for $i=2$: $r_3=\rho_3 \rho_3 =S_{1,\rho_2} S_{\rho_1,\rho_2}=f_{0,0}f_{0,1}$.

\medskip for $i>2$:
$r_3=\rho_i \rho_i =S_{1,\rho_i} S_{\rho_1,\rho_i}=g_{0,i}^2$.

The following lemma holds

\begin{lemma} \label{l7}
From  $r_3$ follow two types of relations in $VB_n'$:
$$
f_{m,0}f_{m,1}=g_{m,i}^2=1,  \quad i>2.
$$
\end{lemma}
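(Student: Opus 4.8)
The plan is to start from the three rewritten forms of $r_3 = \rho_i^2$ computed just above the statement — namely $\tau(r_3) = 1$ for $i = 1$, $\tau(r_3) = f_{0,0}f_{0,1}$ for $i = 2$, and $\tau(r_3) = g_{0,i}^2$ for $i > 2$ — and then generate the whole family of defining relations by letting $\lambda$ range over the Schreier transversal $\Lambda$. By the Reidemeister--Schreier theorem the second block of relations of $VB_n'$ consists of the words $\tau(\lambda\, r_3\, \lambda^{-1})$ with $\lambda = \sigma_1^m\rho_1^\varepsilon$; since $\tau$ turns conjugation in $VB_n$ by $\lambda$ into conjugation in $VB_n'$ by $\lambda$ of the already-rewritten word (the same device used in \lemref{l3} and \lemref{l5}), it suffices to compute $(f_{0,0}f_{0,1})^{\lambda}$ and $(g_{0,i}^2)^{\lambda}$ from \lemref{l1}.

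For $i = 1$ there is nothing to do. For $i = 2$, conjugating by $\rho_1$ and using \lemref{l1}(3) ($f_{0,0}^{\rho_1} = f_{0,1}$, $f_{0,1}^{\rho_1} = f_{0,0}$) gives $(f_{0,0}f_{0,1})^{\rho_1} = f_{0,1}f_{0,0}$, which as a relation is merely the inverse of $f_{0,0}f_{0,1} = 1$ and hence contributes nothing new. Conjugating by $\sigma_1^{-m}$ and using \lemref{l1}(1) ($f_{0,\varepsilon}^{\sigma_1^{-m}} = f_{m,\varepsilon}$) yields $f_{m,0}f_{m,1} = 1$ for every $m \in \mathbb{Z}$, and conjugating the $\rho_1$-image by $\sigma_1^{-m}$ again only reproduces the inverses of these. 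For $i > 2$, \lemref{l1}(4) gives $g_{0,i}^{\rho_1} = g_{0,i}$, so $\rho_1$-conjugation fixes $g_{0,i}^2$, while \lemref{l1}(1) gives $g_{0,i}^{\sigma_1^{-m}} = g_{m,i}$, producing $g_{m,i}^2 = 1$ for all $m \in \mathbb{Z}$ and $i > 2$. Collecting these gives exactly the two asserted families.

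There is essentially no serious obstacle; the only point meriting a word of care is checking that the $\varepsilon = 1$ coset representatives do not enlarge the relation set, which is immediate since $\rho_1$-conjugation sends each relation to its own inverse (the $f$'s) or fixes it (the $g$'s). The verification that $i = 1$, $i = 2$, $i > 2$ exhaust the cases for $\rho_i^2$ is already contained in the case analysis preceding the lemma.
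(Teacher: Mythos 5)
Your proposal is correct and follows essentially the same route as the paper: rewrite $\rho_i^2$ via the Reidemeister--Schreier process for $i=1,2,>2$, then conjugate by $\rho_1$ and by $\sigma_1^{-m}$ using the rules of \lemref{l1} to sweep out the whole family of relations. Your observation that the $\rho_1$-conjugate of $f_{0,0}f_{0,1}$ is just the inverse relation (and that the $g$-relations are fixed) matches the paper's computation, and your citation of \lemref{l1}(3) for the $f$'s is in fact more precise than the paper's own reference.
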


\begin{proof} Conjugating  $r_3$ by $\rho_1$
and using Lemma \ref{l1} 4, we get
$$
(f_{0,0}f_{0,1})^{\rho_1}=f_{0,1}f_{0,0},
$$
$$
(g_{0,i}^2)^{\rho_1}=g_{0,i}^2, \quad i>2.
$$
Conjugating $r_3$ and $\rho_1 r_3 \rho_1$ by $\sigma_1^{-m}$, we get the need relations.
\end{proof}

\subsubsection{Defining relations of $VB_n'$  that follow from the relation $\rho_i \rho_j = \rho_j \rho_i$}

Rewrite this relation in the form
$$r_4=\rho_i \rho_j  \rho_i \rho_j ,~ 1\leq i< j\leq n-1, ~i+1<j.$$

Then

for  $i=1$:
$r_4=\rho_1 \rho_j  \rho_1 \rho_j=
S_{1,\rho_1} S_{\rho_1,\rho_j} S_{1,\rho_1} S_{\rho_1,\rho_j} =g_{0,j}^{2}$,  $j>2$.

We have got this relation when we considered the relation  $r_3$.

\medskip for $i=2$:
$r_4=\rho_2 \rho_k  \rho_2 \rho_k=
S_{1,\rho_2} S_{\rho_1,\rho_k} S_{1,\rho_2} S_{\rho_1,\rho_k} =(f_{0,0}g_{0,k})^{2}$,  $k>3$.

\medskip for $i>2$:
$r_4=\rho_i \rho_j  \rho_i \rho_j=
S_{1,\rho_i} S_{\rho_1,\rho_j} S_{1,\rho_i} S_{\rho_1,\rho_j} =(g_{0,i}g_{0,j})^{2}$, $3\leq i< j\leq n-1$, $i+1<j$.

\medskip The following lemma holds

\begin{lemma} \label{l8}
From the relation  $r_4$  of $VB_n$, the following three types of relations of $VB_n'$ follow:
$$
(f_{m,0}g_{m,k})^2=(f_{m,1}g_{m,k})^{2}=1, \quad k > 3,
$$
$$
(g_{m,i}g_{m,j})^{2}=1, \quad 3\leq i< j\leq n-1, \quad i+1<j.
$$
\end{lemma}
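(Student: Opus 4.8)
The plan is to mimic verbatim the mechanism already used in the proofs of Lemmas~\ref{l3}, \ref{l5} and \ref{l7}: take the words that the Reidemeister rewriting of $r_4$ has just produced, conjugate them once by $\rho_1$ to split off the $\varepsilon=1$ variant, and then conjugate everything by $\sigma_1^{-m}$ to install the shift parameter $m$.

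Concretely, the rewriting displayed just above $r_4$ gives three outputs. The subcase $i=1$ returns $g_{0,j}^{2}$, which is not new: it already appeared in the analysis of $r_3$ and is handled by Lemma~\ref{l7}, so it may be discarded. The subcase $i=2$ returns the relator $(f_{0,0}g_{0,k})^{2}$ for $k>3$, and the subcase $i>2$ returns $(g_{0,i}g_{0,j})^{2}$ for $3\le i<j\le n-1$ with $i+1<j$. These two words are the ones to be conjugated.

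First I would conjugate each of them by $\rho_1$, using only Lemma~\ref{l1}: from part~(4), $g_{0,i}^{\rho_1}=g_{0,i}$, and from part~(3), $f_{0,0}^{\rho_1}=f_{0,1}$. Since conjugation is an automorphism, $(f_{0,0}g_{0,k})^{2}\mapsto(f_{0,1}g_{0,k})^{2}$, while $(g_{0,i}g_{0,j})^{2}$ is left fixed; this already produces the $\varepsilon=1$ relation $(f_{m,1}g_{m,k})^{2}$ at $m=0$. Then I would conjugate $r_4$, its $\rho_1$-conjugate, and $(g_{0,i}g_{0,j})^{2}$ by $\sigma_1^{-m}$ and apply part~(1) of Lemma~\ref{l1}, namely $f_{i,\varepsilon}^{\sigma_1^{-m}}=f_{i+m,\varepsilon}$ and $g_{i,\varepsilon}^{\sigma_1^{-m}}=g_{i+m,\varepsilon}$, which promotes the leading index from $0$ to $m$ and delivers exactly the three families $(f_{m,0}g_{m,k})^{2}=(f_{m,1}g_{m,k})^{2}=1$ and $(g_{m,i}g_{m,j})^{2}=1$ asserted in the lemma.

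I do not expect a genuine obstacle; the content is purely formal. The only points requiring a moment of care are bookkeeping ones: verifying that the $i=1$ subcase contributes nothing beyond Lemma~\ref{l7}, and observing that, because $\Lambda=\{\sigma_1^{i}\rho_1^{\varepsilon}\}$ is a Schreier transversal, the conjugates by $\sigma_1^{-m}$ $(m\in\mathbb{Z})$ and by $\rho_1$ exhaust, up to the trivial relations $S_{K_\alpha,a_\nu}=1$, all the relators $\tau(\lambda r_4\lambda^{-1})$, $\lambda\in\Lambda$, demanded by the Reidemeister--Schreier theorem. Once this is noted, the listed relations are precisely those coming from $r_4$.
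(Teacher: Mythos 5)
Your proposal is correct and follows essentially the same route as the paper: rewrite the three subcases of $r_4$, discard the $i=1$ output as already covered by Lemma~\ref{l7}, conjugate by $\rho_1$ via Lemma~\ref{l1} (parts (3) and (4)) to obtain the $\varepsilon=1$ variant, and then conjugate by $\sigma_1^{-m}$ to introduce the index $m$. Your explicit remark that both $r_4$ and $\rho_1 r_4 \rho_1$ must be shifted by $\sigma_1^{-m}$ to exhaust the transversal $\Lambda$ is exactly the bookkeeping the paper performs.
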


\begin{proof} Conjugated  $r_4$ by $\rho_1$
and using Lemma \ref{l1} 4), we get
$$
\rho_1 (f_{0,0}g_{0,k})^{2} \rho_1=(f_{0,1}g_{0,k})^{2}, \quad k>3,
$$
$$
\rho_1 (g_{0,i}g_{0,j})^{2} \rho_1=(g_{0,i}g_{0,j})^{2}, \quad 3\leq i< j\leq n-1, \quad i+1<j.
$$
Conjugating $r_4$ and $\rho_1 r_4 \rho_1$ by $\sigma_1^{-m}$, we get the need relations.
\end{proof}

\subsubsection{Defining relations of $VB_n'$  that follow from the relation $\rho_i \rho_{i+1} \rho_i = \rho_{i+1} \rho_i \rho_{i+1}$}

Rewrite this relation in the form
$$r_5=\rho_i \rho_{i+1} \rho_i \rho_{i+1} \rho_i \rho_{i+1}.$$

Then

for  $i=1$:
$r_5=\rho_1 \rho_{2} \rho_1 \rho_{2} \rho_1 \rho_{2}=
S_{1,\rho_1} S_{\rho_1,\rho_2} S_{1,\rho_1} S_{\rho_1,\rho_2}S_{1,\rho_1} S_{\rho_1,\rho_2}=
f_{0,1}^{3}$.

\medskip
for  $i=2$:
$r_5=\rho_2 \rho_3 \rho_2 \rho_3 \rho_2 \rho_3=
S_{1,\rho_2} S_{\rho_1,\rho_3} S_{1,\rho_2} S_{\rho_1,\rho_3}S_{1,\rho_2} S_{\rho_1,\rho_3}=
(f_{0,0}g_{0,3})^{3}$.

\medskip
for $i>2$:
$r_5=\rho_i \rho_{i+1} \rho_i \rho_{i+1} \rho_i \rho_{i+1}=
S_{1,\rho_i} S_{\rho_1,\rho_{i+1}} S_{1,\rho_i} S_{\rho_1,\rho_{i+1}}S_{1,\rho_i} S_{\rho_1,\rho_{i+1}}=
(g_{0,i}g_{0,i+1})^{3}$.

The following lemma holds

\begin{lemma} \label{l8-1}
From the relation  $r_5$ of $VB_n$, we have the following five types of relations of $VB_n'$:
$$
f_{m,1}^{3}=1,
$$
$$
(f_{m,0}g_{m,3})^{3}=1,
$$
$$
(g_{m,i}g_{m,i+1})^{3}=1,
$$
$$
f_{m,0}^{3}=1,
$$
$$
(f_{m,1}g_{m,3})^{3}=1.
$$
\end{lemma}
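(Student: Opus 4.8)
The plan is to mimic exactly the pattern already established in the proofs of Lemmas~\ref{l3}, \ref{l5}, \ref{l7}, and~\ref{l8}: take the rewritten forms of $r_5$ for $i=1,2$ and $i>2$ computed just above the statement, conjugate each by $\rho_1$ using \lemref{l1}(4) to obtain the "twisted" companion relations, and then conjugate both the original and the twisted relations by $\sigma_1^{-m}$, invoking \lemref{l1}(1) to shift all indices from $0$ to $m$.

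First I would record the three base relations: for $i=1$ we have $f_{0,1}^3=1$; for $i=2$ we have $(f_{0,0}g_{0,3})^3=1$; and for $i>2$ we have $(g_{0,i}g_{0,i+1})^3=1$. Next I conjugate each by $\rho_1$. Using \lemref{l1}(4), $g_{0,i}^{\rho_1}=g_{0,i}$ for $i>2$ and $f_{0,1}^{\rho_1}=f_{0,0}$, $f_{0,0}^{\rho_1}=f_{0,1}$ (from \lemref{l1}(3)). Hence $(f_{0,1}^3)^{\rho_1}=f_{0,0}^3$, giving $f_{0,0}^3=1$; $((f_{0,0}g_{0,3})^3)^{\rho_1}=(f_{0,1}g_{0,3})^3$, giving $(f_{0,1}g_{0,3})^3=1$; and $((g_{0,i}g_{0,i+1})^3)^{\rho_1}=(g_{0,i}g_{0,i+1})^3$, which yields nothing new. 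That accounts for all five relation types claimed: $f_{m,1}^3=1$, $(f_{m,0}g_{m,3})^3=1$, $(g_{m,i}g_{m,i+1})^3=1$, $f_{m,0}^3=1$, and $(f_{m,1}g_{m,3})^3=1$.

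Finally, conjugating $r_5$ and $\rho_1 r_5 \rho_1$ by $\sigma_1^{-m}$ and applying \lemref{l1}(1) (which shifts the first subscript of each of $f$ and $g$ by $m$ and fixes the letter index and the $\varepsilon$-subscript) promotes each index-$0$ relation to the stated index-$m$ family. Concretely one writes, for instance,
\[
(f_{0,1}^{3})^{\sigma_1^{-m}} = f_{m,1}^{3} = 1, \qquad
\bigl((f_{0,0}g_{0,3})^{3}\bigr)^{\sigma_1^{-m}} = (f_{m,0}g_{m,3})^{3} = 1,
\]
and similarly for the remaining three. I do not anticipate a genuine obstacle here; the only point requiring a little care is making sure the $\rho_1$-conjugation of $(f_{0,0}g_{0,3})^3$ is carried out as a product of conjugates, $(f_{0,0}^{\rho_1}g_{0,3}^{\rho_1})^3$, so that the cube structure is preserved and the $g$-factor is genuinely fixed — this is exactly the "successive conjugation" bookkeeping used throughout \secref{comv}, and it reduces the proof to a one-line citation of \lemref{l1} followed by the $\sigma_1^{-m}$ shift.
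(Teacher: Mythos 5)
Your proposal is correct and follows the same route as the paper: record the three rewritten forms of $r_5$ for $i=1,2,i>2$, conjugate by $\rho_1$ via \lemref{l1} to get $f_{0,0}^3=1$ and $(f_{0,1}g_{0,3})^3=1$ (with the $g$-only relation fixed), then conjugate everything by $\sigma_1^{-m}$ to shift the index $0$ to $m$. Your citation of \lemref{l1}(3) for the $f$-conjugation is in fact slightly more precise than the paper's own reference to part (4).
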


\begin{proof} Conjugating  $r_5$ by $\rho_1$
and using Lemma \ref{l1} 4), we get
$$
(f_{0,1}^{3})^{\rho_1}=f_{0,0}^{3},
$$
$$
((f_{0,0}g_{0,3})^{3})^{\rho_1}=(f_{0,1}g_{0,3})^{3},
$$
$$
((g_{0,i}g_{0,i+1})^{3})^{\rho_1}=(g_{0,i}g_{0,i+1})^{3}, \quad i>2.
$$
Conjugating $r_5$ and $\rho_1 r_5 \rho_1$ by $\sigma_1^{-m}$, we get the need relations.
\end{proof}


\subsubsection{Defining relations of $VB_n'$  that follow from the relation $\sigma_i \rho_j = \rho_j \sigma_i$}
Rewrite this relation in the form
$$r_6=\sigma_i \rho_j  \sigma_i^{-1} \rho_j, ~|i - j| >1.$$

In dependence of $i$ and $j$ we will consider the next cases

\medskip
a) $r_6=\sigma_1 \rho_i  \sigma_1^{-1} \rho_i=
S_{1,\sigma_1} S_{\sigma_1,\rho_i} S_{\rho_1,\sigma_1}^{-1}S_{\rho_1,\rho_i}=g_{1,i}a_0^{-1}g_{0,i}$, for $i>2$.

\medskip
b) $r_6=\sigma_2 \rho_j  \sigma_2^{-1} \rho_j=
S_{1,\sigma_2} S_{\sigma_1,\rho_j} S_{\rho_1,\sigma_2}^{-1}S_{\rho_1,\rho_j}=b_{0,0}g_{1,j}b_{0,1}^{-1}g_{0,j}$,
for $j>3$.

\medskip
c) $r_6=\sigma_k \rho_l  \sigma_k^{-1} \rho_l=
S_{1,\sigma_k} S_{\sigma_1,\rho_l} S_{\rho_1,\sigma_k}^{-1}S_{\rho_1,\rho_l}=
c_kg_{1,l}c_k^{-1}g_{0,l}$, $k, l\geq 3$, $|l-k|>1$.

\medskip
d) $r_6=\sigma_i \rho_1  \sigma_i^{-1} \rho_1=
S_{1,\sigma_i} S_{\sigma_1,\rho_1} S_{\rho_1,\sigma_i}^{-1}S_{\rho_1,\rho_1}=c_i c_i^{-1}=1$, for $i>2$.

\medskip
e) $r_6=\sigma_j \rho_2  \sigma_j^{-1} \rho_2=
S_{1,\sigma_j} S_{\sigma_1,\rho_2} S_{\rho_1,\sigma_j}^{-1}S_{\rho_1,\rho_2}=c_jf_{1,0}c_j^{-1}f_{0,1}$, for $j>3$.

Now, the following lemma holds.

\begin{lemma} \label{l10}
From the relation  $r_6$ of $VB_n$, the  following seven types of relations of $VB_n'$ follow:
$$
g_{m+1,i}a_{m}^{-1}g_{m,i}=1, \quad i \geq 3,
$$
$$
a_m g_{m+1,i}g_{m,i}=1, \quad i \geq 3,
$$
$$
b_{m,0}g_{m+1,j}b_{m,1}^{-1}g_{m,j}=1, \quad j \geq 4,
$$
$$
b_{m,1}g_{m+1,j}b_{m,0}^{-1}g_{m,j}=1, \quad j \geq 4,
$$
$$
c_k g_{m+1,l}c_k^{-1}g_{m,l}=1, \quad k, l\geq 3, \quad |l-k|>1,
$$
$$
c_j f_{m+1,0}c_j^{-1}f_{m,1}=1, \quad j \geq 4,
$$
$$
c_j f_{m+1,1}c_j^{-1}f_{m,0}=1, \quad j \geq 4.
$$
\end{lemma}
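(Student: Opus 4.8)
The plan is to follow the same template as the proofs of Lemmas \ref{l3}, \ref{l5}, \ref{l7}, \ref{l8} and \ref{l8-1}. The rewriting of $r_6$ carried out just above already records, for the cases a)--e), four nontrivial words in the generators of $VB_n'$ that must equal $1$ (case d) being trivial); after translating the index conditions $i>2$ as $i\geq 3$ and $j>3$ as $j\geq 4$, these four words are precisely the $m=0$ instances of four of the seven asserted relation types, namely the first, third, fifth and sixth. What remains is to produce the three ``mirror'' types and then to spread every relation over the family indexed by $m\in\mathbb{Z}$.

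First I would conjugate the words from cases a), b) and e) by $\rho_1$, using the formulas of \lemref{l1}(2)--(4). In case a), since $g_{1,i}^{\rho_1}=a_0g_{1,i}a_0^{-1}$, $a_0^{\rho_1}=a_0^{-1}$ and $g_{0,i}^{\rho_1}=g_{0,i}$, the word $g_{1,i}a_0^{-1}g_{0,i}$ conjugates to $a_0g_{1,i}g_{0,i}$, which is the second type. In case b), using $b_{0,0}^{\rho_1}=b_{0,1}a_0^{-1}$, $b_{0,1}^{\rho_1}=b_{0,0}a_0^{-1}$ and $g_{1,j}^{\rho_1}=a_0g_{1,j}a_0^{-1}$, the word $b_{0,0}g_{1,j}b_{0,1}^{-1}g_{0,j}$ conjugates to $b_{0,1}g_{1,j}b_{0,0}^{-1}g_{0,j}$, the fourth type. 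In case e), using $c_j^{\rho_1}=c_ja_0^{-1}$, $f_{1,0}^{\rho_1}=a_0f_{1,1}a_0^{-1}$ and $f_{0,1}^{\rho_1}=f_{0,0}$, the word $c_jf_{1,0}c_j^{-1}f_{0,1}$ conjugates to $c_jf_{1,1}c_j^{-1}f_{0,0}$, the seventh type. Case c) deserves a remark: since $c_k^{\rho_1}=c_ka_0^{-1}$ and $g_{0,l}^{\rho_1}=g_{0,l}$, the word $c_kg_{1,l}c_k^{-1}g_{0,l}$ is fixed by conjugation with $\rho_1$, so it produces nothing new; this is why the list has seven entries, not eight.

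Finally I would conjugate each of the four original nontrivial words together with the three $\rho_1$-conjugates by $\sigma_1^{-m}$ for arbitrary $m\in\mathbb{Z}$ and apply \lemref{l1}(1), which shifts every subscript index by $m$ and fixes each $c_l$. This turns the $m=0$ relations into the stated $m$-indexed families and yields all seven types. The computation is entirely routine bookkeeping; the only points that need attention are the conversion of the index ranges and the $\rho_1$-invariance of case c), so I anticipate no genuine obstacle.
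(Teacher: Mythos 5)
Your proposal is correct and follows exactly the paper's method: rewrite $r_6$ via the Reidemeister--Schreier process for each case of $i,j$, conjugate the resulting words by $\rho_1$ using \lemref{l1}, and then conjugate everything by $\sigma_1^{-m}$ to generate the $m$-indexed families. In fact your computation of case e) is the right one — the conjugate is $c_jf_{1,1}c_j^{-1}f_{0,0}$, which is needed to produce the seventh relation type; the paper's own displayed formula $(c_jf_{1,0}c_j^{-1}f_{0,1})^{\rho_1}=c_jf_{1,0}c_j^{-1}f_{0,1}$ is a typo that your argument silently corrects.
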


\begin{proof}
Conjugating $r_6$ by $\rho_1$
and using Lemma \ref{l1}, we get
$$
(g_{1,i}a_0^{-1}g_{0,i})^{\rho_1}=a_0 g_{1,i}g_{0,i}, \quad i \geq 3,
$$
$$
(b_{0,0}g_{1,j}b_{0,1}^{-1}g_{0,j})^{\rho_1}=b_{0,1}g_{1,j}b_{0,0}^{-1}g_{0,j}, \quad j \geq 4,
$$
$$
(c_kg_{1,l}c_k^{-1}g_{0,l})^{\rho_1}=c_kg_{1,l}c_k^{-1}g_{0,l}, \quad k, l\geq 3, \quad |l-k|>1,
$$
$$
(c_jf_{1,0}c_j^{-1}f_{0,1})^{\rho_1}=c_jf_{1,0}c_j^{-1}f_{0,1}, \quad j \geq 4.
$$
Conjugating $r_6$ and $\rho_1 r_6 \rho_1$ by $\sigma_1^{-m}$, we get the need relations.
\end{proof}



\subsubsection{Defining relations of $VB_n'$  that follow from the relation $\rho_i \rho_{i+1} \sigma_i = \sigma_{i+1} \rho_{i} \rho_{i+1}$}
Rewrite this relation in the form
$$r_7=\rho_i \rho_{i+1} \sigma_i \rho_{i+1} \rho_i \sigma_{i+1}^{-1}.$$

Then

\medskip for  $i=1$:
$r_7=\rho_1 \rho_{2} \sigma_1 \rho_{2} \rho_1 \sigma_{2}^{-1}=
S_{1,\rho_1} S_{\rho_1,\rho_2} S_{1,\sigma_1} S_{\sigma_1,\rho_2}S_{\sigma_1\rho_1,\rho_1} S_{1,\sigma_2}^{-1}=
f_{0,1}f_{1,0}b_{0,0}^{-1}$.

\medskip
for  $i=2$:
$r_7=\rho_2 \rho_3 \sigma_2 \rho_3 \rho_2 \sigma_3^{-1}=
S_{1,\rho_2} S_{\rho_1,\rho_3} S_{1,\sigma_2} S_{\sigma_1,\rho_3}S_{\sigma_1\rho_1,\rho_2} S_{1,\sigma_3}^{-1}=
f_{0,0}g_{0,3}b_{0,0}g_{1,3}f_{1,1}c_3^{-1}$.

\medskip
for  $i>2$:
$r_7=\rho_i \rho_{i+1} \sigma_i \rho_{i+1} \rho_i \sigma_{i+1}^{-1}=
S_{1,\rho_i} S_{\rho_1,\rho_{i+1}} S_{1,\sigma_i}
S_{\sigma_1,\rho_{i+1}}S_{\sigma_1\rho_1,\rho_i} S_{1,\sigma_{i+1}}^{-1}=\\
g_{0,i}g_{0,i+1}c_i g_{1,i+1}g_{1,i}c_{i+1}^{-1}$.

Now the following defining relations of  $VB_n'$ follow from the relation
$r_7$.
\begin{lemma} \label{l12}
The following five types of relations in $VB_n'$ follow from the relation $r_7$ of $VB_n$:
$$
f_{m,1}f_{m+1,0}b_{m,0}^{-1}=1,
$$
$$
f_{m,0}a_m f_{m+1,1}b_{m,1}^{-1}=1,
$$
$$
f_{m,0}g_{m,3}b_{m,0}g_{m+1,3}f_{m+1,1}c_3^{-1}=1,
$$
$$
f_{m,1}g_{m,3}b_{m,1}g_{m+1,3}f_{m+1,0}c_3^{-1}=1,
$$
$$
g_{m,i}g_{m,i+1}c_i g_{m+1,i+1}g_{m+1,i}c_{i+1}^{-1}=1, \quad i>2.
$$
\end{lemma}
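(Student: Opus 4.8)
The plan is to follow verbatim the template already used for Lemmas \ref{l3}, \ref{l5}, \ref{l7}, \ref{l8}, \ref{l8-1} and \ref{l10}: take the rewritten forms of $r_7$ computed just above, conjugate them by $\rho_1$ to produce the ``mirrored'' relations, and then conjugate everything by $\sigma_1^{-m}$ to spread each relation over all shifts. We already have the three rewritten words: for $i=1$ it is $f_{0,1}f_{1,0}b_{0,0}^{-1}$, for $i=2$ it is $f_{0,0}g_{0,3}b_{0,0}g_{1,3}f_{1,1}c_3^{-1}$, and for $i>2$ it is $g_{0,i}g_{0,i+1}c_i g_{1,i+1}g_{1,i}c_{i+1}^{-1}$. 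Setting each equal to $1$ already yields the $m=0$ instances of items (1), (3) and (5) of the statement.

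First I would conjugate each of the three words by $\rho_1$ and simplify with Lemma \ref{l1}. For $i=1$, using $f_{0,1}^{\rho_1}=f_{0,0}$, $f_{1,0}^{\rho_1}=a_0 f_{1,1}a_0^{-1}$ and $(b_{0,0}^{-1})^{\rho_1}=a_0 b_{0,1}^{-1}$, the interior $a_0$'s telescope and one is left with $f_{0,0}a_0 f_{1,1}b_{0,1}^{-1}=1$, the $m=0$ case of (2). For $i=2$, using $f_{0,0}^{\rho_1}=f_{0,1}$, $g_{0,3}^{\rho_1}=g_{0,3}$, $b_{0,0}^{\rho_1}=b_{0,1}a_0^{-1}$, $g_{1,3}^{\rho_1}=a_0 g_{1,3}a_0^{-1}$, $f_{1,1}^{\rho_1}=a_0 f_{1,0}a_0^{-1}$ and $(c_3^{-1})^{\rho_1}=a_0 c_3^{-1}$, every $a_0$-factor cancels in succession and the relation becomes $f_{0,1}g_{0,3}b_{0,1}g_{1,3}f_{1,0}c_3^{-1}=1$, the $m=0$ case of (4). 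For $i>2$, the conjugates $g_{0,i}^{\rho_1}=g_{0,i}$, $g_{0,i+1}^{\rho_1}=g_{0,i+1}$, $c_i^{\rho_1}=c_i a_0^{-1}$, $g_{1,i+1}^{\rho_1}=a_0 g_{1,i+1}a_0^{-1}$, $g_{1,i}^{\rho_1}=a_0 g_{1,i}a_0^{-1}$, $(c_{i+1}^{-1})^{\rho_1}=a_0 c_{i+1}^{-1}$ again make all $a_0$'s cancel, so this word is fixed by conjugation by $\rho_1$ and contributes nothing new.

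Finally I would conjugate all six resulting relations (the three original rewritten words and their three $\rho_1$-conjugates) by $\sigma_1^{-m}$ for arbitrary $m\in\mathbb{Z}$. By Lemma \ref{l1}(1) this replaces the subscripts $0$ and $1$ on the letters $a,b,f,g$ by $m$ and $m+1$ respectively while fixing each $c_l$, which upgrades the $m=0$ relations above into the five stated families, and shows these are all the relations coming from $r_7$. This is the same closing step used in the previous lemmas.

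The argument involves no new idea; the only thing requiring care is the bookkeeping of the $a_0^{\pm1}$ factors introduced by $b_{0,0}^{\rho_1}$, $c_l^{\rho_1}$, $f_{1,\varepsilon}^{\rho_1}$ and $g_{1,l}^{\rho_1}$ (and by their inverses, via $(x a_0^{-1})^{-1}=a_0 x^{-1}$), namely checking in each of the three words that these factors genuinely cancel in consecutive pairs and leave exactly the asserted mirrored word. That cancellation check is the ``hard'' part — in practice just the attention-demanding one.
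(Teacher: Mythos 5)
Your proposal is correct and follows exactly the paper's own argument: rewrite $r_7$ via the Reidemeister--Schreier process, conjugate the three resulting words by $\rho_1$ using Lemma~\ref{l1} (obtaining $f_{0,0}a_0f_{1,1}b_{0,1}^{-1}$, $f_{0,1}g_{0,3}b_{0,1}g_{1,3}f_{1,0}c_3^{-1}$, and the unchanged $i>2$ word), and then shift by $\sigma_1^{-m}$. Your explicit tracking of the telescoping $a_0^{\pm1}$ factors matches the computations the paper leaves implicit.
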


\begin{proof} Conjugating  $r_7$ by  $\rho_1$
and using Lemma \ref{l1} 2), 4) 5) and 6), we get
$$
(f_{0,1}f_{1,0}b_{0,0}^{-1})^{\rho_1}=f_{0,0}a_0f_{1,1}b_{0,1}^{-1},
$$
$$
(f_{0,0}g_{0,3}b_{0,0}g_{1,3}f_{1,1}c_3^{-1})^{\rho_1}=f_{0,1}g_{0,3}b_{0,1}g_{1,3}f_{1,0}c_3^{-1},
$$
$$
(g_{0,i}g_{0,i+1}c_i g_{1,i+1}g_{1,i}c_{i+1}^{-1})^{\rho_1}=
g_{0,i}g_{0,i+1}c_i g_{1,i+1}g_{1,i}c_{i+1}^{-1}, \quad i>2.
$$
Conjugating $r_7$ and $\rho_1 r_7 \rho_1$ by $\sigma_1^{-m}$, we get the need relations.
\end{proof}

\medskip


Using the relations
$$
f_{m,0} f_{m,1}=1, ~~~\mbox{(see Lemma \ref{l7})},
$$
we can remove elements  $f_{m,1}$, $m\in \mathbb{Z}$, from the generating set and keep only elements

$$
f_{m,0}=f_m,  \quad m\in \mathbb{Z}.
$$

\bigskip

 The following result gives a presentation of $VB_n'$.

\medskip

\begin{theorem}\label{nsp}
The commutator subgroup  $VB_n'$ is generated by elements
$$
a_m, \quad b_{m,\varepsilon},  \quad c_l, \quad f_m, \quad g_{m,l},
$$
where  $m\in \mathbb{Z}$, $\varepsilon=0,1$, $2 < l < n$
and is defined by the relations

$$
b_{m,0} c_j b_{m+1,0}^{-1} c_j^{-1} = 1, \quad j \geq 4,
$$
$$
c_i c_j c_i^{-1} c_j^{-1} = 1, \quad i \geq 3, \quad j > i+1,
$$
$$
b_{m,1} a_m^{-1} c_j a_{m+1} b_{m+1,1}^{-1} c_j^{-1} = 1, \quad j \geq 4,
$$
$$
c_i a_m^{-1} c_j c_i^{-1} a_m c_j^{-1} = 1, \quad i \geq 3, \quad j > i+1.
$$


$$
b_{m+1,0}b_{m+2,0}^{-1}b_{m,0}^{-1}=1,
$$
$$
b_{m,0}c_3b_{m+2,0}c_3^{-1}b_{m+1,0}^{-1}c_3^{-1}=1,
$$
$$
c_i c_{i+1} c_i c_{i+1}^{-1} c_i^{-1} c_{i+1}^{-1}=1, \quad i \geq 3,
$$
$$
a_m b_{m+1,1}a_{m+2}b_{m+2,1}^{-1}a_{m+1}^{-1}b_{m,1}^{-1}=1,
$$
$$
b_{m,1}a_m^{-1}c_3 a_{m+1}b_{m+2,1}a_{m+2}^{-1}a_{m+1}^{-1} c_3^{-1}a_m a_{m+1} b_{m+1,1}^{-1}c_3^{-1}=1,
$$
$$
c_i a_m^{-1}c_{i+1} a_m^{-1}c_i c_{i+1}^{-1} a_m c_i^{-1} a_m c_{i+1}^{-1}=1, \quad i \geq 3.
$$


$$
g_{m,i}^2=1,  \quad i>2.
$$


$$
(f_m g_{m,k})^2=1, \quad k > 3,
$$
$$
(g_{m,i}g_{m,j})^{2}=1, \quad 3\leq i< j\leq n-1, \quad i+1<j.
$$


$$
f_m^{3}=1,
$$
$$
(f_m g_{m,3})^{3}=1,
$$
$$
(g_{m,i}g_{m,i+1})^{3}=1,  \quad i>2,
$$


$$
g_{m+1,i}a_{m}^{-1}g_{m,i}=1, \quad i \geq 3,
$$
$$
b_{m,1}g_{m+1,j}b_{m,0}^{-1}g_{m,j}=1, \quad j \geq 4,
$$
$$
c_k g_{m+1,l}c_k^{-1}g_{m,l}=1, \quad k, l\geq 3, \quad |l-k|>1,
$$
$$
c_j f_{m+1}c_j^{-1}f_{m}^{-1}=1, \quad j \geq 4,
$$


$$
f_{m}^{-1}f_{m+1}b_{m,0}^{-1}=1,
$$
$$
f_{m}a_m f_{m+1}^{-1}b_{m,1}^{-1}=1,
$$
$$
f_{m}g_{m,3}b_{m,0}g_{m+1,3}f_{m+1}^{-1}c_3^{-1}=1,
$$
$$
f_{m}^{-1}g_{m,3}b_{m,1}g_{m+1,3}f_{m+1}c_3^{-1}=1,
$$
$$
g_{m,i}g_{m,i+1}c_i g_{m+1,i+1}g_{m+1,i}c_{i+1}^{-1}=1, \quad i>2.
$$

\end{theorem}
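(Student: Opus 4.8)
The plan is to run the Reidemeister--Schreier procedure on the presentation of $VB_n$ from \secref{prel}, using the Schreier transversal $\Lambda=\{\sigma_1^i\rho_1^\varepsilon : i\in\Z,\ \varepsilon=0,1\}$ chosen in \secref{gen}, and then to simplify the resulting presentation by Tietze transformations, the crucial simplification being the elimination of the generators $f_{m,1}$.

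First I would invoke the Reidemeister--Schreier theorem: $VB_n'$ is generated by the elements $S_{\lambda,a}$ ($\lambda\in\Lambda$, $a$ a generator of $VB_n$), subject to the trivial relations $S_{\lambda,a}=1$ coming from freely trivial pairs, together with the relations $\tau(\lambda r_\mu\lambda^{-1})=1$ for $\lambda\in\Lambda$ and $r_1,\dots,r_7$ the defining relators of $VB_n$. The computation already carried out in \secref{gen} shows that, after dropping the trivial generators $S_{\lambda,\sigma_1}$ (with $\varepsilon=0$) and $S_{\lambda,\rho_1}$, the surviving generators are precisely $a_m,\ b_{m,\varepsilon},\ c_l,\ f_{m,\varepsilon},\ g_{m,l}$ with $m\in\Z$, $\varepsilon=0,1$, $2<l<n$. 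For the relators I would use the following reduction: since $\tau(\sigma_1^{m}w\sigma_1^{-m})$ is obtained from $\tau(w)$ by the index shift $i\mapsto i+m$ on the first subscript of each generator (consistently with the group-level conjugation formulas of \lemref{l1}(1)), and $\tau(\rho_1 w\rho_1)$ is obtained from $\tau(w)$ via the formulas of \lemref{l1}(2)--(4), it is enough to rewrite $r_\mu$ and $\rho_1 r_\mu\rho_1$ in the Schreier generators for each $\mu$ and then close under the index shift. This is exactly what Lemmas~\ref{l3}, \ref{l5}, \ref{l7}, \ref{l8}, \ref{l8-1}, \ref{l10} and~\ref{l12} accomplish, so together they provide a complete (though redundant) set of defining relations in the generators $a_m,b_{m,\varepsilon},c_l,f_{m,\varepsilon},g_{m,l}$.

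It then remains to tidy up. The relations $f_{m,0}f_{m,1}=1$ of \lemref{l7} express $f_{m,1}=f_{m,0}^{-1}$, so I would put $f_m:=f_{m,0}$, delete every generator $f_{m,1}$, and substitute $f_{m,1}\mapsto f_m^{-1}$ in all remaining relations. After this substitution a number of relators become consequences of the retained ones and may be deleted: $(f_m^{-1}g_{m,k})^2=1$, $f_m^{-3}=1$ and $(f_m^{-1}g_{m,3})^3=1$ follow from $(f_mg_{m,k})^2=1$, $f_m^3=1$, $(f_mg_{m,3})^3=1$ together with the involution relations $g_{m,i}^2=1$; the relations $a_mg_{m+1,i}g_{m,i}=1$ and $b_{m,0}g_{m+1,j}b_{m,1}^{-1}g_{m,j}=1$ follow, after inverting and again using $g^2=1$, from $g_{m+1,i}a_m^{-1}g_{m,i}=1$ and $b_{m,1}g_{m+1,j}b_{m,0}^{-1}g_{m,j}=1$; and $c_jf_{m+1}^{-1}c_j^{-1}f_m=1$ is just the inverse of $c_jf_{m+1}c_j^{-1}f_m^{-1}=1$. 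Discarding these redundancies leaves precisely the presentation in the statement.

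The only part requiring genuine care is the rewriting step: for each of the seven families of defining relators of $VB_n$ one must follow how the index $l$ of a generator $\sigma_l$ or $\rho_l$ falls into the cases $l=1$, $l=2$, $l>2$ --- and, when a $\rho_l$ with $l>2$ interacts with $\sigma_2$ or $\rho_2$, the further split $l=3$ versus $l>3$ --- because $\sigma_1,\rho_1$ are absorbed into the transversal while $\sigma_2,\rho_2$ braid with them non-trivially. Keeping the resulting index bounds straight (the thresholds $j\ge4$, the distinguished role of the index $3$, the constraints $i+1<j$, and so on) is where the bookkeeping is delicate; once the rewriting in Lemmas~\ref{l3}--\ref{l12} is in hand, assembling the presentation and verifying the above redundancies is routine.
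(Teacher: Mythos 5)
Your proposal follows essentially the same route as the paper: the Reidemeister--Schreier computation with the transversal $\Lambda$, the reduction of all conjugates $\tau(\lambda r_\mu\lambda^{-1})$ to the base cases $\tau(r_\mu)$ and $\tau(\rho_1 r_\mu\rho_1)$ via the conjugation rules of \lemref{l1}, the assembly of Lemmas~\ref{l3}--\ref{l12}, and the Tietze elimination of $f_{m,1}$ using $f_{m,0}f_{m,1}=1$. In fact you go slightly beyond the paper's one-line proof by explicitly justifying (correctly, using $g_{m,i}^2=1$) why the redundant relators from Lemmas~\ref{l8}, \ref{l8-1} and~\ref{l10} may be discarded, a step the paper performs silently.
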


\begin{proof}
The theorem is obtained by combining the  set of relations we have obtained in \lemref{l3}--\lemref{l12}.
\end{proof}
\vspace{0.5cm}

\section{Proof of \thmref{mainth}}\label{pf}
\subsection{Finite generation of $VB_n'$, $n \geq 4$} \label{vb4}
For the next calculations we remove the generators
$b_{m,1}$ and $b_{m,0}$ from the presentation in \thmref{nsp}.

\medskip Using the relations
$$
b_{m,1}=f_{m}a_m f_{m+1}^{-1},
$$
we can remove generators  $b_{m,1}$ from the set of generators.

We have


$$
b_{m,0} c_j b_{m+1,0}^{-1} c_j^{-1} = 1, \quad j > 4,
$$
$$
c_i c_j c_i^{-1} c_j^{-1} = 1, \quad i \geq 3, \quad j \geq i+1,
$$
$$
f_{m} a_m f_{m+1}^{-1} a_m^{-1} c_j a_{m+1} f_{m+2} a_{m+1}^{-1} f_{m+1}^{-1} c_j^{-1} = 1, \quad j \geq 4,
$$
$$
c_i a_m^{-1} c_j c_i^{-1}a_m c_j^{-1} = 1, \quad i \geq 3, \quad j > i+1.
$$


$$
b_{m+1,0}b_{m+2,0}^{-1}b_{m,0}^{-1}=1,
$$
$$
b_{m,0}c_3b_{m+2,0}c_3^{-1}b_{m+1,0}^{-1}c_3^{-1}=1,
$$
$$
c_i c_{i+1} c_i c_{i+1}^{-1} c_i^{-1} c_{i+1}^{-1}=1, \quad i \geq 3,
$$
$$
a_m f_{m+1}a_{m+1} f_{m+2}^{-1}a_{m+2}=f_{m}a_m f_{m+1}^{-1}a_{m+1}f_{m+2}a_{m+2} f_{m+3}^{-1},
$$
$$
f_{m}a_m f_{m+1}^{-1}a_m^{-1}c_3 a_{m+1}f_{m+2}a_{m+2} f_{m+3}^{-1}a_{m+2}^{-1}=
    c_3 f_{m+1}a_{m+1} f_{m+2}^{-1} a_{m+1}^{-1} a_m^{-1} c_3 a_{m+1},
$$
$$
c_i a_m^{-1}c_{i+1} a_m^{-1}c_i c_{i+1}^{-1} a_m c_i^{-1} a_m c_{i+1}^{-1}=1, \quad i \geq 3.
$$


$$
g_{m,i}^2=1,  \quad i>2.
$$


$$
(f_m g_{m,k})^2=1, \quad k > 3,
$$
$$
(g_{m,i}g_{m,j})^{2}=1, \quad 3\leq i< j\leq n-1, \quad i+1<j.
$$


$$
f_m^{3}=1,
$$
$$
(f_m g_{m,3})^{3}=1,
$$
$$
(g_{m,i}g_{m,i+1})^{3}=1,  \quad i>2,
$$


$$
g_{m+1,i}a_{m}^{-1}g_{m,i}=1, \quad i \geq 3,
$$
$$
f_{m}a_m f_{m+1}^{-1}g_{m+1,j}b_{m,0}^{-1}g_{m,j}=1, \quad j \geq 4,
$$
$$
c_k g_{m+1,l}c_k^{-1}g_{m,l}=1, \quad k, l\geq 3, \quad |l-k|>1,
$$
$$
c_j f_{m+1}c_j^{-1}f_{m}^{-1}=1, \quad j \geq 4,
$$


$$
f_{m}^{-1}f_{m+1}b_{m,0}^{-1}=1,
$$
$$
f_{m}g_{m,3}b_{m,0}g_{m+1,3}f_{m+1}^{-1}c_3^{-1}=1,
$$
$$
f_{m}^{-1}g_{m,3}f_{m}a_m f_{m+1}^{-1}g_{m+1,3}f_{m+1}c_3^{-1}=1,
$$
$$
g_{m,i}g_{m,i+1}c_i g_{m+1,i+1}g_{m+1,i}c_{i+1}^{-1}=1, \quad i>2.
$$

Using the relations
$$
b_{m,0}=f_{m}^{-1} f_{m+1},
$$
we can remove the generators  $b_{m,0}$ from the generating set.

\bigskip After removing $b_{m, 0}$ and $b_{m,1}$ we have following set of defining relations of $VB_n'$:


$$
f_{m}^{-1} f_{m+1} c_j = c_j f_{m+1}^{-1} f_{m+2}, \quad j \geq 4,
$$
$$
c_i c_j c_i^{-1} c_j^{-1} = 1, \quad i \geq 3, \quad j > i+1,
$$
$$
f_{m} a_m f_{m+1}^{-1} a_m^{-1} c_j a_{m+1} f_{m+2} a_{m+1}^{-1} f_{m+1}^{-1} c_j^{-1} = 1, \quad j \geq 4,
$$
$$
c_i a_m^{-1} c_j c_i^{-1} a_m c_j^{-1} = 1, \quad i \geq 3, \quad j > i+1.
$$


$$
f_{m} f_{m+1}^{-1} f_{m+2}= f_{m+1} f_{m+2}^{-1} f_{m+3},
$$
$$
f_{m}^{-1} f_{m+1}c_3f_{m+2}^{-1} f_{m+3}=c_3 f_{m+1}^{-1} f_{m+2} c_3,
$$
$$
c_i c_{i+1} c_i c_{i+1}^{-1} c_i^{-1} c_{i+1}^{-1}=1, \quad i \geq 3,
$$
$$
a_m f_{m+1}a_{m+1} f_{m+2}^{-1}a_{m+2}=f_{m}a_m f_{m+1}^{-1}a_{m+1}f_{m+2}a_{m+2} f_{m+3}^{-1},
$$
$$
f_{m}a_m f_{m+1}^{-1}a_m^{-1}c_3 a_{m+1}f_{m+2}a_{m+2} f_{m+3}^{-1}a_{m+2}^{-1}=
    c_3 f_{m+1}a_{m+1} f_{m+2}^{-1} a_{m+1}^{-1} a_m^{-1} c_3 a_{m+1},
$$
$$
c_i a_m^{-1}c_{i+1} a_m^{-1}c_i c_{i+1}^{-1} a_m c_i^{-1} a_m c_{i+1}^{-1}=1, \quad i \geq 3.
$$


$$
g_{m,i}^2=1,  \quad i>2.
$$


$$
(f_m g_{m,k})^2=1, \quad k > 3,
$$
$$
(g_{m,i}g_{m,j})^{2}=1, \quad 3\leq i< j\leq n-1, \quad i+1<j.
$$


$$
f_m^{3}=1,
$$
$$
(f_m g_{m,3})^{3}=1,
$$
$$
(g_{m,i}g_{m,i+1})^{3}=1,  \quad i>2,
$$


$$
g_{m+1,i}a_{m}^{-1}g_{m,i}=1, \quad i \geq 3,
$$
$$
g_{m,j}f_{m}a_m f_{m+1}^{-1}g_{m+1,j}=f_{m}^{-1} f_{m+1}, \quad j \geq 4,
$$
$$
c_k g_{m+1,l}=g_{m,l} c_k, \quad k, l\geq 3, \quad |l-k|>1,
$$
$$
c_j f_{m+1}= f_{m} c_j, \quad j \geq 4,
$$


$$
f_{m}g_{m,3}f_{m}^{-1} f_{m+1}g_{m+1,3}f_{m+1}^{-1}=c_3^{-1},
$$
$$
f_{m}^{-1}g_{m,3}f_{m}a_m f_{m+1}^{-1}g_{m+1,3}f_{m+1}=c_3,
$$
$$
g_{m,i}g_{m,i+1}c_i =c_{i+1} g_{m+1,i} g_{m+1,i+1}, \quad i>2.
$$

\vspace{1cm}

We will use this set of relations to prove that
 $VB_n'$ is finitely generated for all  $n\geq 4$.

\begin{lemma} \label{c1}
The commutator subgroup  $VB_n'$ is finitely generated for all  $n\geq 4$. In particular, $VB_4'$ is generated by 5 elements: $c_3$, $f_0$, $f_1$, $f_2$, $g_{0,3}$, and $VB_n'$, $n \geq 5$, is generated by $2n -3$ elements: $c_3, \ldots, c_{n-1}$, $f_0$, $f_1$, $f_2$, $g_{0,3}, \ldots, g_{0,n-1}$.
\end{lemma}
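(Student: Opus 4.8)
The plan is to exhibit, for each $n\ge 4$, a finite subset of the generating set in \thmref{nsp} (after the elimination of the $b_{m,\varepsilon}$ carried out above) and to show that every remaining generator $a_m$, $f_m$, $g_{m,l}$ with $m\in\mathbb Z$ can be expressed as a word in that finite subset, using the defining relations just displayed. Concretely, for $n=4$ I would take the set $\{c_3,f_0,f_1,f_2,g_{0,3}\}$, and for $n\ge 5$ the set $\{c_3,\dots,c_{n-1},f_0,f_1,f_2,g_{0,3},\dots,g_{0,n-1}\}$; the claim is that these generate $VB_n'$.

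First I would use the two relations $f_{m}^{-1}f_{m+1}f_{m+2}=f_{m+1}f_{m+2}^{-1}f_{m+3}$ (which after elimination of $b_{m,0}=f_m^{-1}f_{m+1}$ is the old Artin relation $b_{m+1,0}b_{m+2,0}^{-1}b_{m,0}^{-1}=1$) together with $f_m^3=1$ to express $f_m$ for all $m\in\mathbb Z$ in terms of $f_0,f_1,f_2$: the recursion $f_{m+3}=f_{m+2}^{-1}f_{m+1}^{-1}f_m f_{m+1} f_{m+2}$ (and its inverse for $m<0$) shows each $f_m$ is a word in $f_0,f_1,f_2$, so also every $b_{m,0}=f_m^{-1}f_{m+1}$ and $b_{m,1}=f_m a_m f_{m+1}^{-1}$ is handled once the $a_m$ are. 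Next I would handle $a_m$: from \lemref{l12}, the relation $f_m a_m f_{m+1}^{-1} b_{m,1}^{-1}=1$ was used to set $b_{m,1}=f_m a_m f_{m+1}^{-1}$, but I instead want to \emph{read it the other way}, getting $a_m$ from the $g$'s. The relation $g_{m+1,i}a_m^{-1}g_{m,i}=1$, i.e. $a_m=g_{m,i}g_{m+1,i}$ (using $g_{m,i}^2=1$), together with the conjugation relation $c_j f_{m+1}=f_m c_j$ for $j\ge 4$ — which gives $c_j = f_{m}c_jf_{m+1}^{-1}$ and, combined with $c_3 f_{m+1}=f_m c_3$ coming from $c_j f_{m+1}c_j^{-1}f_m^{-1}=1$ — and the relation $g_{m,j}f_m a_m f_{m+1}^{-1}g_{m+1,j}=f_m^{-1}f_{m+1}$ will let me climb from $g_{0,l}$ to $g_{m,l}$ for all $m$. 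The key leverage is the relation $g_{m+1,i}=a_m^{-1}g_{m,i}a_m\cdot(\text{something})$ — more precisely, rewriting $g_{m+1,i}a_m^{-1}g_{m,i}=1$ and $g_{m,i}g_{m,i+1}c_i=c_{i+1}g_{m+1,i}g_{m+1,i+1}$ to solve for $g_{m+1,l}$ in terms of $g_{m,l}$, $c_l$, and $a_m$, while the $c_l$ themselves are constants and the $a_m$ are by then words in $f_0,f_1,f_2,g_{0,\ast}$.

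The scheme is therefore: (i) $f_m$ for all $m$ from $f_0,f_1,f_2$; (ii) using the relations $f_m g_{m,3}f_m^{-1}f_{m+1}g_{m+1,3}f_{m+1}^{-1}=c_3^{-1}$ and $c_jf_{m+1}=f_mc_j$, express $g_{m+1,3}$ in terms of $g_{m,3}$, $f$'s and $c_3$, inductively pinning down $g_{m,3}$ for all $m$ from $g_{0,3}$; (iii) for $n\ge 5$, use $g_{m,i}g_{m,i+1}c_i=c_{i+1}g_{m+1,i}g_{m+1,i+1}$ together with $c_k g_{m+1,l}=g_{m,l}c_k$ ($|l-k|>1$) to propagate $g_{m,l}$ for $l\ge 4$ from $g_{0,l}$; (iv) finally recover $a_m=g_{m,i}g_{m+1,i}$ for any fixed $i\ge 3$ and check this is consistent with all occurrences of $a_m$. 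For $n=4$ there is only $g_{m,3}$, and $a_m$ must be extracted from the relation $g_{m,j}f_m a_m f_{m+1}^{-1}g_{m+1,j}=f_m^{-1}f_{m+1}$ with $j\ge 4$ absent, so instead $a_m$ comes from $f_m a_m f_{m+1}^{-1}=b_{m,1}$ combined with $a_m f_{m+1}a_{m+1}f_{m+2}^{-1}a_{m+2}=f_m a_m f_{m+1}^{-1}a_{m+1}f_{m+2}a_{m+2}f_{m+3}^{-1}$, the "mixed braid" relation, which in $VB_4'$ lets one solve for $a_{m+2}$ from $a_m,a_{m+1}$ and the $f$'s — a three-term recursion whose base data $a_0,a_1$ must in turn be shown expressible via $g_{0,3}$ and the $f$'s using $a_0 = g_{0,3}g_{1,3}$ once $g_{1,3}$ is known.

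The main obstacle is step (iv) / the $n=4$ extraction of $a_m$: one must verify that the expression obtained for $a_m$ is independent of the auxiliary choices (which $i$, which $j$) and genuinely lies in the chosen finite set, and for $n=4$ that the three-term recursion for $a_m$ does not introduce new generators — this is exactly where the specific count "$5$ for $n=4$, $2n-3$ for $n\ge 5$" is forced. I expect the bookkeeping to be a finite but somewhat delicate chain of substitutions into the displayed relations; no new idea beyond systematically solving each relation for the generator of largest index $m$ is needed.
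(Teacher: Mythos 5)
Your plan is essentially the paper's proof: you retain the same finite set and use the same four families of relations to eliminate the remaining generators --- the recursion $f_{m} f_{m+1}^{-1} f_{m+2}= f_{m+1} f_{m+2}^{-1} f_{m+3}$ for the $f_m$, the relation $f_{m}g_{m,3}f_{m}^{-1} f_{m+1}g_{m+1,3}f_{m+1}^{-1}=c_3^{-1}$ for the $g_{m,3}$, the relation $g_{m,i}g_{m,i+1}c_i =c_{i+1} g_{m+1,i} g_{m+1,i+1}$ for the $g_{m,l}$ with $l\geq 4$, and $a_m=g_{m,i}g_{m+1,i}$ for the $a_m$. The one place you go off course is the $n=4$ case, which you single out as the main obstacle: you claim $a_m$ cannot be recovered as $g_{m,3}g_{m+1,3}$ because the index bound $j\geq 4$ excludes it, and you substitute a three-term recursion through the mixed braid relation. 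This confuses two different relations. The restriction $j\geq 4$ belongs to $b_{m,1}g_{m+1,j}b_{m,0}^{-1}g_{m,j}=1$, which comes from $\sigma_2\rho_j=\rho_j\sigma_2$; the relation you actually need, $g_{m+1,i}a_{m}^{-1}g_{m,i}=1$, comes from $\sigma_1\rho_i=\rho_i\sigma_1$ and is stated for all $i\geq 3$, so $i=3$ is available in $VB_4'$ and $a_m=g_{m,3}g_{m+1,3}$ works uniformly for every $n\geq 4$; no recursion on the $a_m$ is needed. (Your own base case $a_0=g_{0,3}g_{1,3}$ is an instance of exactly this relation, so your fallback would be circular if the relation were genuinely unavailable.) A second, harmless slip: $c_3f_{m+1}=f_mc_3$ is not among the relations, since $c_jf_{m+1}=f_mc_j$ requires $j\geq 4$; but you do not need it, because $f_{m}g_{m,3}f_{m}^{-1} f_{m+1}g_{m+1,3}f_{m+1}^{-1}=c_3^{-1}$ alone determines $g_{m+1,3}$ from $g_{m,3}$, $c_3$ and the $f$'s.
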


\begin{proof}

1) Using the relations
$$
g_{m,i}g_{m+1,i}=a_{m}, \quad i \geq 3,
$$
we will remove the generators  $a_m$, $m\in \mathbb{Z}$,
and express them by  $g_{m,i}$, $m\in \mathbb{Z}$, $i\geq 3$.

2) Using the relations
$$
f_{m} f_{m+1}^{-1} f_{m+2}= f_{m+1} f_{m+2}^{-1} f_{m+3},
$$
we can remove the generators $f_m$, for $m\in \mathbb{Z}$, and keep only $f_0$, $f_1$, $f_2$.

3) Using the relations
$$
f_{m}g_{m,3}f_{m}^{-1} f_{m+1}g_{m+1,3}f_{m+1}^{-1}=c_3^{-1},
$$
we can remove the generators $g_{m,3}$, for  $m\in \mathbb{Z}$, and keep only $g_{0,3}$.

If  $n=4$, then we have only generators $g_{0,3}$, $f_0$, $f_1$, $f_2$, $c_3$. Hence,  $VB_4'$ is finitely generated.

If  $n>4$, then

4) Using the relations
$$
g_{m,i}g_{m,i+1}c_i =c_{i+1} g_{m+1,i} g_{m+1,i+1}, \quad i>2.
$$
we can remove the generators $g_{m,i}$, for $m\in \mathbb{Z}$, $i>3$, and keep only $g_{0,i}$.
\end{proof}

\subsection{Infinite generation of $VB_3'$}\label{vb3}

Consider the case  $n = 3$.
 From Theorem \ref{nsp} follows that $VB_3'$ is generated by elements
$$
a_m,\quad b_{m,\varepsilon},\quad f_m,\quad m \in \mathbb{Z},
$$
and is defined by the relations
\begin{equation}\label{5}
b_{m+1,0} b_{m+2,0}^{-1} b_{m,0}^{-1} = 1,
\end{equation}

\begin{equation}\label{6}
a_m b_{m+1,1} a_{m+2} b_{m+2,1}^{-1} a_{m+1}^{-1} b_{m,1}^{-1} = 1,
\end{equation}

\begin{equation}\label{7}
f_{m}^3 = 1,
\end{equation}

\begin{equation}\label{8}
f_{m}^{-1} f_{m+1} f_{m,0}^{-1} = 1,
\end{equation}

\begin{equation}\label{9}
f_{m} a_{m} f_{m+1}^{-1} b_{m,1}^{-1} = 1.
\end{equation}
\\

Now we apply Tietze transformations to the presentation of $VB_3'$. Using relations \eqnref{9}  we can remove the generator $b_{m,1} = f_m a_m f_{m+1}^{-1}$. Then the modified set of defining relations take the form:
\begin{equation}\label{10}
b_{m+1,0} b_{m+2,0}^{-1} b_{m,0}^{-1} = 1,
\end{equation}

\begin{equation}\label{11}
a_m f_{m+1} a_{m+1} f_{m+2}^{-1} a_{m+2} f_{m+3} a_{m+2}^{-1} f_{m+2}^{-1} a_{m+1}^{-1} f_{m+1} a_m^{-1} f_{m}^{-1}= 1,
\end{equation}

\begin{equation}\label{13}
f_{m}^3 = 1,
\end{equation}

\begin{equation}\label{14}
f_{m}^{-1} f_{m+1} b_{m,0}^{-1} = 1,
\end{equation}
\\

Using relations \eqnref{14}  we can remove the generator $b_{m,0} = f_m^{-1} f_{m+1}$. Then $VB_3'$ is generated by elements
$$
a_m,\quad  f_m,\quad m \in \mathbb{Z},
$$
and is defined by relation:
\begin{equation}\label{16}
f_{m+1}^{-1} f_{m+2} f_{m+3}^{-1} f_{m+2} f_{m+1}^{-1} f_m = 1,
\end{equation}

\begin{equation}\label{17}
a_m f_{m+1} a_{m+1} f_{m+2}^{-1} a_{m+2} f_{m+3} a_{m+2}^{-1} f_{m+2}^{-1} a_{m+1}^{-1} f_{m+1} a_m^{-1} f_{m}^{-1}= 1,
\end{equation}

\begin{equation}\label{18}
f_{m}^3 = 1,
\end{equation}
\\

So, we have the following lemma.

\begin{lemma}
The group $VB_3'$ has a presentation with $ \{ a_m$, $f_m$, $m \in \mathbb{Z} \} $ as the generating set, and the relations \eqnref{16}--\eqnref{18} as the defining relations.
\end{lemma}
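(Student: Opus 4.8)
The plan is to start from the presentation of $VB_n'$ given in \thmref{nsp}, specialize it to $n=3$, and then perform two successive Tietze transformations to eliminate the $b$-generators.

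First I would specialize \thmref{nsp} to $n=3$. There the generators $c_l$ and $g_{m,l}$ are indexed only by $l$ with $2<l<n$, so they are absent when $n=3$, and every defining relation of \thmref{nsp} that mentions some $c_l$ or $g_{m,l}$ becomes vacuous. Going down the list of relations in \thmref{nsp}, the only surviving generators are $a_m,b_{m,0},b_{m,1},f_m$ ($m\in\mathbb{Z}$), and the only surviving defining relations are \eqnref{5}--\eqnref{9}. Thus the task reduces to showing that the presentation with these generators and relations \eqnref{5}--\eqnref{9} is Tietze-equivalent to the claimed one.

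Next I would apply the Tietze transformation coming from \eqnref{9}, which rewrites as $b_{m,1}=f_m a_m f_{m+1}^{-1}$: delete each generator $b_{m,1}$ and substitute this word for it wherever it occurs. Among \eqnref{5}--\eqnref{8} the symbol $b_{m,1}$ appears only in \eqnref{6}; substituting and freely reducing turns \eqnref{6} into \eqnref{11}, while \eqnref{5}, \eqnref{7}, \eqnref{8} carry over as \eqnref{10}, \eqnref{13}, \eqnref{14}. Then I would apply the Tietze transformation coming from \eqnref{14}, which rewrites as $b_{m,0}=f_m^{-1}f_{m+1}$: delete each $b_{m,0}$ and substitute. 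Now $b_{m,0}$ occurs only in \eqnref{10}, and the substitution sends $b_{m+1,0}b_{m+2,0}^{-1}b_{m,0}^{-1}$ to $f_{m+1}^{-1}f_{m+2}f_{m+3}^{-1}f_{m+2}f_{m+1}^{-1}f_m$, i.e.\ to the relator \eqnref{16}; relations \eqnref{11} and \eqnref{13} are untouched and become \eqnref{17} and \eqnref{18}. What remains is precisely the presentation on $\{a_m,f_m:m\in\mathbb{Z}\}$ with defining relations \eqnref{16}--\eqnref{18}.

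There is no genuine obstacle here; the argument is pure bookkeeping via Tietze transformations, which preserve the isomorphism type of the group. The only points demanding care are correctly identifying which relations of \thmref{nsp} survive the specialization $n=3$, and carrying out the two substitutions together with the accompanying free reductions without error.
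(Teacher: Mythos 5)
Your proposal is correct and follows exactly the paper's own argument: specialize Theorem \ref{nsp} to $n=3$ (where the $c_l$ and $g_{m,l}$ and all relations involving them vanish, leaving \eqnref{5}--\eqnref{9}), then eliminate $b_{m,1}=f_ma_mf_{m+1}^{-1}$ via \eqnref{9} and $b_{m,0}=f_m^{-1}f_{m+1}$ via \eqnref{14} by Tietze transformations. No differences worth noting.
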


\begin{lemma}\label{lem200}
$VB_3'$ is not finitely generated.
\end{lemma}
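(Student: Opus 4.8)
The plan is to build an explicit surjection of $VB_3'$ onto a group that is manifestly not finitely generated; since a homomorphic image of a finitely generated group is finitely generated, this is enough. The natural target is the free abelian group $A=\bigoplus_{m\in\mathbb Z}\mathbb Z$ on a basis $\{e_m\}_{m\in\mathbb Z}$, and the homomorphism will simply record the ``$a$-content'' of an element while forgetting the $f$'s.

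Using the presentation of $VB_3'$ from the preceding lemma (generators $a_m,f_m$ for $m\in\mathbb Z$ and defining relators \eqnref{16}--\eqnref{18}), I define $\psi\colon VB_3'\to A$ on generators by $\psi(a_m)=e_m$ and $\psi(f_m)=0$. To see that $\psi$ is well defined it suffices (von Dyck's theorem) to check that each defining relator is mapped to $0$. Relator \eqnref{18}, which is $f_m^3$, and relator \eqnref{16}, which is a word in the $f$'s alone, are killed at once. In relator \eqnref{17} every $f$-letter again goes to $0$, so the point is just that the total exponent of each generator $a_i$ occurring in the word $a_m f_{m+1} a_{m+1} f_{m+2}^{-1} a_{m+2} f_{m+3} a_{m+2}^{-1} f_{m+2}^{-1} a_{m+1}^{-1} f_{m+1} a_m^{-1} f_m^{-1}$ is zero; indeed $a_m$, $a_{m+1}$ and $a_{m+2}$ each occur once with exponent $+1$ and once with exponent $-1$. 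Hence $\psi$ is a homomorphism, and it is onto because $e_m=\psi(a_m)$ for every $m$.

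Now $A=\bigoplus_{m\in\mathbb Z}\mathbb Z$ is not finitely generated: any finite subset of it involves only finitely many of the $e_m$, hence cannot generate. Therefore $VB_3'$ is not finitely generated, which is the assertion of the lemma. (One can push the same computation a little further: passing to the full abelianization, \eqnref{18} gives $3f_m=0$, \eqnref{16} becomes the three-term recursion $f_{m+3}=f_m+f_{m+1}-f_{m+2}$ with coefficients mod $3$, and \eqnref{17} contributes nothing new about the $a_m$; this yields $VB_3'/VB_3''\cong\mathbb Z_3\oplus\mathbb Z_3\oplus\mathbb Z_3\oplus\bigoplus_{m\in\mathbb Z}\mathbb Z$ and in particular reproves the lemma, while also giving \corref{cor1}(1).)

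I expect no genuine obstacle here. The single place that merits a careful look is the exponent-sum check for relator \eqnref{17}: it is precisely the fact that \eqnref{17} is balanced in each $a$-letter that makes the ``forget the $f$'s'' projection a legitimate homomorphism. Everything else is routine bookkeeping.
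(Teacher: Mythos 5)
Your proof is correct and follows essentially the same route as the paper: both arguments kill the generators $f_m$ and observe that the resulting quotient is an infinitely generated group on the images of the $a_m$, so $VB_3'$ cannot be finitely generated. The only difference is that the paper maps onto the free group $F_\infty=\langle a_m,\ m\in\mathbb Z\rangle$ (the normal closure of the $f_m$'s kills all three relators outright), whereas you further abelianize to $\bigoplus_{m\in\mathbb Z}\mathbb Z$, reducing the verification to the exponent-sum check on relator \eqnref{17} — a harmless simplification that changes nothing essential.
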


\begin{proof}If we put $f_m=1$, for all $m\in \mathbb{Z}$, then all the relations \eqnref{16}--\eqnref{18} will vanish, i.~e. the subgroup $\left\langle a_m\, | \, m\in \mathbb{Z}  \right\rangle$
is infinitely generated free group with the set of free generators $a_m$, $m\in \mathbb{Z}$
and we have an epimorphism
$$
VB_3' \longrightarrow F_{\infty}=\left\langle a_m,\, \, m\in \mathbb{Z}  \right\rangle
$$
with kernel ${\left\langle f_m,\, \, m\in \mathbb{Z}  \right\rangle}^{VB_3'}$.
\end{proof}

\bigskip

\subsection{Proof of \thmref{mainth}}
Note that $VB_2= F_2 \rtimes S_2$ and hence $VB_2'$ is infinitely generated.
Then the \thmref{mainth} follows by combining \lemref{c1} and \lemref{lem200}.

\bigskip

\subsection{Proof of \corref{cor1}}\label{pfcor1}
In the quotient  $VB_3'/VB_3''$ relations have the form
$$
f_m f_{m+1} = f_{m+2} f_{m+3},
$$
$$
f_{m}^3 = 1.
$$
In the generators  $ f_0, f_1, f_2$, $a_m$, $m\in \mathbb{Z}$, we have relations
$$
f_{0}^3 = f_{1}^3 =f_{2}^3 =1.
$$
Hence,  $VB_3'/VB_3''$ is isomorphic to the direct sum
$$
\mathbb{Z}_3 \oplus \mathbb{Z}_3 \oplus\mathbb{Z}_3 \oplus \mathbb{Z}^{\infty}.
$$


\medskip (2) Consider the case  $n = 4$.
Then $VB_4'$ is generated by elements
$$
a_m, \quad c_3, \quad f_m, \quad g_{m,3},
$$
where  $m\in \mathbb{Z}$, and the defining relations have the form

$$
f_{m} f_{m+1}^{-1} f_{m+2}= f_{m+1} f_{m+2}^{-1} f_{m+3},
$$
$$
f_{m}^{-1} f_{m+1}c_3f_{m+2}^{-1} f_{m+3}=c_3 f_{m+1}^{-1} f_{m+2} c_3,
$$
$$
a_m f_{m+1}a_{m+1} f_{m+2}^{-1}a_{m+2}=f_{m}a_m f_{m+1}^{-1}a_{m+1}f_{m+2}a_{m+2} f_{m+3}^{-1},
$$
$$
f_{m}a_m f_{m+1}^{-1}a_m^{-1}c_3 a_{m+1}f_{m+2}a_{m+2} f_{m+3}^{-1}a_{m+2}^{-1}=
    c_3 f_{m+1}a_{m+1} f_{m+2}^{-1} a_{m+1}^{-1} a_m^{-1} c_3 a_{m+1},
$$
$$
g_{m,3}^2=1,
$$
$$
f_m^{3}=1,
$$
$$
(f_m g_{m,3})^{3}=1,
$$
$$
g_{m+1,3}a_{m}^{-1}g_{m,3}=1,
$$
$$
f_{m}g_{m,3}f_{m}^{-1} f_{m+1}g_{m+1,3}f_{m+1}^{-1}=c_3^{-1},
$$
$$
f_{m}^{-1}g_{m,3}f_{m}a_m f_{m+1}^{-1}g_{m+1,3}f_{m+1}=c_3,
$$

\vspace{1cm}

Consider these relations in the quotient  $VB_4'/VB_4''$ and for the images of the generators
$$
c_3, \quad a_m, \quad f_m, \quad g_{m,3}, \quad m\in \mathbb{Z},
$$
we will use the same symbols.

From relation  $ g_{m,3}^2=f_m^{3}=(f_m g_{m,3})^{3}=1 $ we get $g_{m,3}=1$.

Then from the relation  $g_{m+1,3}a_{m}^{-1}g_{m,3}=1$ follows that  $a_{m}=1$.

The other relations have the form
$$
c_3=f_m^{3}=1, \quad f_{m} f_{m+1} =  f_{m+2} f_{m+3}.
$$
Hence, we can keep only generators  $f_0$, $f_1$, $f_2$ and defining relations
 $f_0^3=f_1^3=f_2^3=1$.

Hence the quotient  $VB_4'/VB_4''$ is isomorphic to the direct product
$\mathbb{Z}_3 \oplus \mathbb{Z}_3 \oplus\mathbb{Z}_3 $
of three cyclic groups  $\mathbb{Z}_3  $
of order 3.


\medskip Consider the case  $n > 4$.  We will consider relations of  $VB_n'$ in the quotient  $VB_n'/VB_n''$
and will denote the images of the generators
$$
a_m, \quad b_{m,\varepsilon},  \quad c_l, \quad f_m, \quad g_{m,l},
$$
where  $m\in \mathbb{Z}$, $\varepsilon=0,1$, $2<l<n$
by the same symbols.

As in the case  $n=4$ we get $ g_{m,3}=a_{m}=1$.

Then from the relations
$$
g_{m,i}^2=1,  \quad  (g_{m,i}g_{m,i+1})^{3}=1,  \quad i>2,
$$
follows that  $ g_{m,i}=1$, $i>2$.

From the relations
$$
f_m^{3}=1, \quad (f_m g_{m,k})^2=1, \quad k > 3,
$$
follows that  $ f_m=1$.

Remaining relations have the form
$$
c_i=1, \quad i \geq 3.
$$
This completes the proof.

\section{Commutator subgroup of the welded braid group} \label{wel}

The welded braid group $WB_n$, $n \geq 2$, is the quotient of $VB_n$ by the relations
$$
\rho_i \sigma_{i+1} \sigma_i = \sigma_{i+1} \sigma_i \rho_{i+1},~~i = 1, 2, \ldots, n-2.
$$
In this section we will find a presentation of $WB_n'$. We will use the same set of generators that we used for $VB_n$ and $VB_n'$. Hence to find defining relations for $WB_n'$ we need to add relations that follow from the relation
$$
r_8 = \rho_i \sigma_{i+1} \sigma_i \rho_{i+1} \sigma_{i}^{-1} \sigma_{i+1}^{-1}.
$$
Depending on $i$ we will consider 3 cases:

if $i = 1$, then
$$
r_8 = \rho_1 \sigma_{2} \sigma_1 \rho_{2} \sigma_{1}^{-1} \sigma_{2}^{-1} = S_{1,\rho_1} S_{\rho_1,\sigma_2} S_{\sigma_1 \rho_1,\sigma_1} S_{\sigma_1^2 \rho_1,\rho_2} S_{\sigma_1,\sigma_1}^{-1} S_{1,\sigma_2}^{-1} = b_{0,1} a_1 f_{2,1} b_{0,0}^{-1};
$$

if $i = 2$, then
$$
r_8 = \rho_2 \sigma_{3} \sigma_2 \rho_{3} \sigma_{2}^{-1} \sigma_{3}^{-1} = S_{1,\rho_2} S_{\rho_1,\sigma_3} S_{\sigma_1 \rho_1,\sigma_2} S_{\sigma_1^2 \rho_1,\rho_3} S_{\sigma_1,\sigma_2}^{-1} S_{1,\sigma_3}^{-1} = f_{0,0} c_3 b_{1,1} g_{2,3} b_{1,0}^{-1} c_{3}^{-1};
$$

if $i > 2$, then
\begin{eqnarray*}
r_8 &=& \rho_i \sigma_{i+1} \sigma_i \rho_{i+1} \sigma_{i}^{-1} \sigma_{i+1}^{-1} \\
&=& S_{1,\rho_i} S_{\rho_1,\sigma_{i+1}} S_{\sigma_1 \rho_1,\sigma_i} S_{\sigma_1^2 \rho_1,\rho_{i+1}} S_{\sigma_1,\sigma_i}^{-1} S_{1,\sigma_{i+1}}^{-1} \\&=& g_{0,i} c_{i+1} c_i g_{2,i+1} c_{i}^{-1} c_{i+1}^{-1}.
\end{eqnarray*}

We will use the following conjugation rules

\begin{lemma} \label{con}
In $WB_n$ the following conjugation rules hold:
\begin{itemize}
\item[(1)] $a_1^{\rho_1} = a_0 a_1^{-1} a_0^{-1},$

\item[(2)] $f_{2,1}^{\rho_1} = a_0 a_1 f_{2,0} a_1^{-1} a_0^{-1},$

\item[(3)] $g_{2,i}^{\rho_1} = a_0 a_1 g_{2,i} a_1^{-1} a_0^{-1}$ for $i > 2$.\end{itemize}
\end{lemma}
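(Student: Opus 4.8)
The plan is to verify the three conjugation formulas by direct computation, in the same style as \lemref{l1}, working entirely inside $WB_n$ where the extra relation $r_8$ (equivalently $\rho_i\sigma_{i+1}\sigma_i=\sigma_{i+1}\sigma_i\rho_{i+1}$) is now available. First I recall that $a_1 = \sigma_1(\rho_1\sigma_1\rho_1\sigma_1^{-1})\sigma_1^{-1}$, $f_{2,0}=\sigma_1^2(\rho_2\rho_1)\sigma_1^{-2}$ (using the convention $f_m=f_{m,0}$), and $g_{2,i}=\sigma_1^2(\rho_i\rho_1)\sigma_1^{-2}$. In $VB_n$ alone one cannot simplify $a_1^{\rho_1}$ past the expression in Lemma~\ref{l1}(2), namely $b_{1,0}^{\rho_1}=a_0b_{1,1}a_1^{-1}a_0^{-1}$; the point of the new relation is precisely that it gives additional identities among $a_i,b_{i,\varepsilon},f_{i,\varepsilon},g_{i,j}$ that allow the further reduction claimed here.

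The key step for (1) is to rewrite $\rho_1 a_1 \rho_1$ as a word in $\sigma_1,\rho_1,\rho_2,\ldots$ and apply the Reidemeister rewriting map $\tau$, exactly as done in the proofs of \lemref{l1} and \lemref{con}'s analogues. Concretely, $\rho_1 a_1 \rho_1 = \rho_1\sigma_1\rho_1\sigma_1\rho_1\sigma_1^{-1}\rho_1\sigma_1^{-1}\rho_1$, and one feeds this through $\tau$ using the coset representatives $\Lambda=\{\sigma_1^i\rho_1^\varepsilon\}$; the braid relation $\sigma_1\sigma_2\sigma_1=\sigma_2\sigma_1\sigma_2$ and the welded relation are then invoked to collapse the resulting product of $S_{\lambda,a}$'s into $a_0 a_1^{-1} a_0^{-1}$. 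For (2) and (3) one does the same with $\rho_1 f_{2,0}\rho_1 = \rho_1\sigma_1^2\rho_2\rho_1\sigma_1^{-2}\rho_1$ and $\rho_1 g_{2,i}\rho_1 = \rho_1\sigma_1^2\rho_i\rho_1\sigma_1^{-2}\rho_1$, again pushing the $\rho_1$'s inward using $r_8$ at the crucial spot where a $\rho$ meets the $\sigma_1\sigma_1$ prefix; the outcome is the stated conjugate $a_0a_1(\,\cdot\,)a_1^{-1}a_0^{-1}$. Alternatively, and perhaps more cleanly, one may derive (2) and (3) from (1) together with Lemma~\ref{l1}: since $b_{1,0}^{\rho_1}=a_0b_{1,1}a_1^{-1}a_0^{-1}$ and $b_{1,1}^{\rho_1}=b_{1,0}$ (the $\sigma_1^{-1}$-conjugate of Lemma~\ref{l1}(2)'s $b_{0,1}^{\rho_1}$-formula shifted), conjugating the $VB_n'$ relations that express $f_{2,\varepsilon}$ and $g_{2,i}$ in terms of the $b$'s by $\rho_1$ and substituting (1) yields the claimed formulas after cancellation.

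The main obstacle I anticipate is bookkeeping: the words involved have length up to nine or ten letters, each letter contributes an $S_{\lambda,a}^{\pm1}$ whose value depends on whether the partial-product coset representative has $\varepsilon=0$ or $\varepsilon=1$, and the simplifications rely on nonobvious applications of the braid and welded relations inside $WB_n$ rather than merely freely reducing. In particular one must be careful that the $S$-symbols are read off with the correct $K_{i_j}$ (the rule differs for $\varepsilon_j=+1$ versus $\varepsilon_j=-1$), and that after applying $r_8$ one re-expresses everything back in the chosen generators $a_i,b_{i,\varepsilon},f_i,g_{i,j}$; a sign or index slip here would propagate. Once these three formulas are in hand, they will play the same role for $WB_n'$ that \lemref{l1} played for $VB_n'$: they let us conjugate the new relations coming from $r_8$ by $\rho_1$ and by $\sigma_1^{-m}$ to produce the full list of defining relations of $WB_n'$, which is the next step of the argument.
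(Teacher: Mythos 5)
Your overall method -- write each element as a word in the $\sigma_i,\rho_j$, conjugate by $\rho_1$, and feed the result through the Reidemeister rewriting $\tau$ with respect to $\Lambda=\{\sigma_1^i\rho_1^\varepsilon\}$ -- is exactly what the paper does, and it works. But the proposal misidentifies where the content lies, and this is not harmless. You assert that the braid relation and the welded relation $r_8$ must be "invoked to collapse" the product of $S$-symbols, and that "in $VB_n$ alone one cannot simplify $a_1^{\rho_1}$" to the stated form. Neither is true: $\tau(\rho_1 a_1\rho_1)$ collapses to $a_0a_1^{-1}a_0^{-1}$ by free reduction alone, using only the trivial generators $S_{\lambda,\rho_1}=1$ and $S_{\sigma_1^i,\sigma_1}=1$; no braid, mixed, or welded relation enters, and all three formulas already hold in $VB_n'$ (the lemma is stated "in $WB_n$" only because that is where it is used). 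If you actually try to "push the $\rho_1$'s inward using $r_8$ at the crucial spot," you have left the rewriting algorithm: you would be rewriting a different word, and there is no guarantee the $S$-symbols you read off coincide with the ones the paper's relation list is phrased in.

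There are also two concrete slips that would derail the computation if carried out as written. First, your word for $\rho_1a_1\rho_1$ contains an extra $\rho_1$: with $a_1=\sigma_1\rho_1\sigma_1\rho_1\sigma_1^{-1}\sigma_1^{-1}$ the correct word is $\rho_1\sigma_1\rho_1\sigma_1\rho_1\sigma_1^{-1}\sigma_1^{-1}\rho_1$, whereas your nine-letter word has five $\rho_1$'s and so does not even lie in $VB_n'$ (its image in $\mathbb{Z}_2$ is nontrivial), so $\tau$ cannot be applied to it. Second, item (2) asserts a formula for $f_{2,1}^{\rho_1}$, so you must start from $f_{2,1}=\sigma_1^2\rho_1\rho_2\sigma_1^{-2}$ and rewrite $\rho_1\sigma_1^2\rho_1\rho_2\sigma_1^{-2}\rho_1$, not $\rho_1 f_{2,0}\rho_1$ as you propose; the two are related only after one already knows the conjugation rules for $a_0$ and $a_1$. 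Finally, the "cleaner" alternative of obtaining $b_{1,1}^{\rho_1}$ by shifting the formula for $b_{0,1}^{\rho_1}$ by $\sigma_1^{-1}$ is invalid, since conjugation by $\rho_1$ and by $\sigma_1^{-1}$ do not commute; this is precisely why \lemref{l1} records the $\rho_1$-conjugates of $b_{1,\varepsilon}$ and $b_{2,0}$ separately rather than deducing them from the index-$0$ cases. Fixing the starting words and simply executing $\tau$ letter by letter gives the three identities directly.
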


\begin{proof}
(1) Note that \begin{eqnarray*}\rho_1 a_1 \rho_1 &=&  \rho_1 \sigma_1 \rho_1 \sigma_1 \rho_1 \sigma_1^{-1} \sigma_1^{-1} \rho_1\\
&=& S_{1,\rho_1} S_{\rho_1,\sigma_{1}} S_{\sigma_1 \rho_1,\rho_1} S_{\sigma_1,\sigma_{1}} S_{\sigma_1^2,\rho_1} S_{\sigma_1 \rho_1,\sigma_1}^{-1} S_{\rho_1,\sigma_{1}}^{-1} S_{\rho_1,\rho_1} \\& = & a_0 a_1^{-1} a_0^{-1}; \end{eqnarray*}

(2) Next we have,  \begin{eqnarray*} \rho_1 f_{2,1} \rho_1 &=& \rho_1 \sigma_1 \sigma_1 \rho_1 \rho_2  \sigma_1^{-1} \sigma_1^{-1} \rho_1 \\&=&
S_{1,\rho_1} S_{\rho_1,\sigma_{1}} S_{\sigma_1 \rho_1, \sigma_1} S_{\sigma_1^2 \rho_1,\rho_{1}} S_{\sigma_1^2,\rho_2} S_{\sigma_1 \rho_1,\sigma_1}^{-1} S_{\rho_1,\sigma_{1}}^{-1} S_{\rho_1,\rho_1} \\&=&  a_0 a_1 f_{2,0} a_1^{-1} a_0^{-1};\end{eqnarray*}

(3) Finally, \begin{eqnarray*} \rho_1 g_{2,i} \rho_1 &=& \rho_1 \sigma_1 \sigma_1 \rho_i \rho_1  \sigma_1^{-1} \sigma_1^{-1} \rho_1 \\
&=& S_{1,\rho_1} S_{\rho_1,\sigma_{1}} S_{\sigma_1 \rho_1, \sigma_1} S_{\sigma_1^2 \rho_1,\rho_{i}} S_{\sigma_1^2,\rho_1} S_{\sigma_1 \rho_1,\sigma_1}^{-1} S_{\rho_1,\sigma_{1}}^{-1} S_{\rho_1,\rho_1} \\ &=& a_0 a_1 g_{2,i} a_1^{-1} a_0^{-1}.\end{eqnarray*}
This proves the lemma.
\end{proof}

\begin{lemma} \label{l5.2}
From the relation  $r_8$  of $WB_n$, the following six types of relations of $WB_n'$ follow:

$b_{m,1} a_{m+1} f_{m+2,1} b_{m,0}^{-1} = 1$,

$f_{m,0} c_3 b_{m+1,1} g_{m+2,3} b_{m+1,0}^{-1} c_3^{-1} = 1$,

$g_{m,i} c_{i+1} c_i g_{m+2,i+1} c_i^{-1} c_{i+1}^{-1} = 1$,

$b_{m,0}  f_{m+2,0} a_{m+1}^{-1}  b_{m,1}^{-1} = 1$,

$f_{m,1} c_3 f_{m+1,0} a_{m+1} g_{m+2,3} b_{m+1,1}^{-1} c_3^{-1} = 1$,

$g_{m,i} c_{i+1} a_m^{-1} c_i a_{m+1} g_{m+2,i+1} a_{m+1}^{-1} c_i^{-1} a_m c_{i+1}^{-1} = 1$.

\end{lemma}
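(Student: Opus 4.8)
\emph{Proof plan.} The argument follows the same template as the proofs of \lemref{l3}, \lemref{l5}, \lemref{l7}, \lemref{l8}, \lemref{l8-1}, \lemref{l10} and \lemref{l12}. Setting $r_8=1$, the three rewritings of $r_8$ displayed above are already relations of $WB_n'$, and conjugating each of them by $\sigma_1^{-m}$ --- which, by \lemref{l1}(1), simply shifts every subscript by $m$ --- produces the first, second and third families of relations in the statement. So the task reduces to producing the remaining three families, and for that I would conjugate each of the three base relations by $\rho_1$ and simplify.

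For the $\rho_1$-conjugations I would combine the rules of \lemref{l1} with the new rules of \lemref{con}. In the $i=1$ relation $b_{0,1}a_1f_{2,1}b_{0,0}^{-1}$ one substitutes $b_{0,1}^{\rho_1}=b_{0,0}a_0^{-1}$ and $b_{0,0}^{\rho_1}=b_{0,1}a_0^{-1}$ from \lemref{l1}, together with $a_1^{\rho_1}=a_0a_1^{-1}a_0^{-1}$ and $f_{2,1}^{\rho_1}=a_0a_1f_{2,0}a_1^{-1}a_0^{-1}$ from \lemref{con}; after the interior factors $a_0^{\pm1}$ and $a_1^{\pm1}$ cancel in pairs one is left with $b_{0,0}f_{2,0}a_1^{-1}b_{0,1}^{-1}$, which after the $\sigma_1^{-m}$-shift is the fourth family. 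The $i>2$ relation $g_{0,i}c_{i+1}c_ig_{2,i+1}c_i^{-1}c_{i+1}^{-1}$ is treated identically, using $g_{0,i}^{\rho_1}=g_{0,i}$, $c_l^{\rho_1}=c_la_0^{-1}$ and $g_{2,i+1}^{\rho_1}=a_0a_1g_{2,i+1}a_1^{-1}a_0^{-1}$; the cancellations leave $g_{0,i}c_{i+1}a_0^{-1}c_ia_1g_{2,i+1}a_1^{-1}c_i^{-1}a_0c_{i+1}^{-1}$, which is the sixth family after shifting. The $i=2$ relation $f_{0,0}c_3b_{1,1}g_{2,3}b_{1,0}^{-1}c_3^{-1}$ requires one extra ingredient: the $\rho_1$-conjugate of $b_{1,1}$, which is not listed in \lemref{l1}. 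This is obtained by conjugating the identity $b_{1,0}^{\rho_1}=a_0b_{1,1}a_1^{-1}a_0^{-1}$ of \lemref{l1}(2) by $\rho_1$ once more (recall $\rho_1^2=1$, so $\rho_1$-conjugation is an involution) and inserting $a_1^{\rho_1}=a_0a_1^{-1}a_0^{-1}$, which gives $b_{1,1}^{\rho_1}=a_0b_{1,0}a_1^{-1}a_0^{-1}$. Plugging this, together with $f_{0,0}^{\rho_1}=f_{0,1}$, $c_3^{\rho_1}=c_3a_0^{-1}$ and $g_{2,3}^{\rho_1}=a_0a_1g_{2,3}a_1^{-1}a_0^{-1}$, into the conjugation and simplifying (using the relations already obtained to bring the outcome into the displayed shape) yields the fifth family.

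I expect the only real difficulty to be the bookkeeping: one must keep careful track of the nested conjugating letters $a_0^{\pm1}$ and $a_1^{\pm1}$ so that the interior ones annihilate in pairs while the exposed ones survive in the correct positions, and one must first extract the conjugation rule for $b_{1,1}$ (and, if the reduction is organized differently, also for $b_{2,1}$) from \lemref{l1} and \lemref{con}. Once those formulas are recorded, the six displayed relations follow by direct substitution and free reduction, and conjugating by $\sigma_1^{-m}$ promotes each of them to the stated $m$-indexed family.
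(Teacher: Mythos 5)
Your plan is the paper's proof: take the three rewritings of $r_8$, conjugate each by $\rho_1$ using \lemref{l1} and \lemref{con}, and then shift indices by conjugating with $\sigma_1^{-m}$. Your computations for the first, third, fourth and sixth families, and the auxiliary rule $b_{1,1}^{\rho_1}=a_0b_{1,0}a_1^{-1}a_0^{-1}$, are correct and agree with what the paper records.

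The one place where you are not done is the fifth family, and you have in fact computed it more honestly than the paper does. Carrying out the substitutions you list, the $\rho_1$-conjugate of $f_{0,0}c_3b_{1,1}g_{2,3}b_{1,0}^{-1}c_3^{-1}$ freely reduces to $f_{0,1}c_3\,b_{1,0}\,g_{2,3}b_{1,1}^{-1}c_3^{-1}$, whereas the lemma asserts $f_{0,1}c_3\,f_{1,0}a_1\,g_{2,3}b_{1,1}^{-1}c_3^{-1}$. These coincide only if $b_{1,0}=f_{1,0}a_1$, which is \emph{not} one of ``the relations already obtained'': in $VB_n'$ one has $b_{m,0}=f_{m,0}^{-1}f_{m+1,0}$, so the needed identity is equivalent to $a_m=f_{m,0}f_{m+1,0}$ (using $f_{m,0}^3=1$), a relation the paper only extracts later, in the analysis of $WB_4'$, and does so by \emph{using} the fifth relation. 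So your closing phrase ``using the relations already obtained to bring the outcome into the displayed shape'' hides a step that is either circular or at least unproved at this stage; to close the gap you should either state the fifth relation in the form your computation actually yields, $f_{m,1}c_3b_{m+1,0}g_{m+2,3}b_{m+1,1}^{-1}c_3^{-1}=1$, or supply an independent derivation of $b_{m,0}=f_{m,0}a_m$ in $WB_n'$. Note that the paper's own proof simply displays the conjugate in the already-substituted form without comment, so this is a defect you have inherited from (and, to your credit, made visible in) the source.
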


\begin{proof}
Conjugating relations $r_8$ by $\rho_1$ and using Lemma \ref{con}, we get 3 relations:
$$
(b_{0,1} a_1 f_{2,1} b_{0,0}^{-1})^{\rho_1} = b_{0,0} f_{2,0} a_1^{-1}  b_{0,1}^{-1},
$$

$$
(f_{0,0} c_3 b_{1,1} g_{2,3} b_{1,0}^{-1} c_{3}^{-1})^{\rho_1} = f_{0,1} c_3 f_{1,0} a_1 g_{2,3} b_{1,1}^{-1} c_{3}^{-1},
$$

$$
(g_{0,i} c_{i+1} c_i g_{2,i+1} c_{i}^{-1} c_{i+1}^{-1})^{\rho_1} = g_{0,i} c_{i+1} a_0^{-1} c_i a_1 g_{2,i+1} a_1^{-1} c_{i}^{-1} a_0 c_{i+1}^{-1}.
$$
Conjugating relations $r_8$ and $\rho_1 r_8 \rho_1$ by $\sigma_1^{-m}$, we get the six  relations from the lemma.
\end{proof}

Thus we have the following.
\begin{cor}
The commutator subgroup  $WB_n'$ is generated by elements
$$
a_m, \quad b_{m,\varepsilon},  \quad c_l, \quad f_m, \quad g_{m,l},
$$
where  $m\in \mathbb{Z}$, $\varepsilon=0,1$, $2 < l < n$
and is defined by the relations in \thmref{nsp} and \lemref{l5.2}.
\end{cor}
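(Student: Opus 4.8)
The plan is to run the Reidemeister--Schreier procedure on $WB_n$ in exactly the way it was run on $VB_n$ in Section~\ref{comv}. Recall that $WB_n$ has the same generating set $\{\sigma_1,\dots,\sigma_{n-1},\rho_1,\dots,\rho_{n-1}\}$ as $VB_n$, and its defining relations are those of $VB_n$ together with the single new family $r_8=\rho_i\sigma_{i+1}\sigma_i\rho_{i+1}\sigma_i^{-1}\sigma_{i+1}^{-1}$, $i=1,\dots,n-2$. The first point to observe is that $r_8$ is killed by the abelianization map $\varphi\colon VB_n\to\mathbb{Z}\times\mathbb{Z}_2$ (both sides of $\rho_i\sigma_{i+1}\sigma_i=\sigma_{i+1}\sigma_i\rho_{i+1}$ have $\sigma$-content $2$ and $\rho$-content $1$), so $\varphi$ also realizes the abelianization of $WB_n$; hence $WB_n/WB_n'\cong\mathbb{Z}\times\mathbb{Z}_2$ and, by the same argument as in Section~\ref{gen}, $\Lambda=\{\sigma_1^i\rho_1^\varepsilon\mid i\in\mathbb{Z},\ \varepsilon=0,1\}$ is a Schreier transversal for $WB_n'$ in $WB_n$. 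The Schreier generators $S_{\lambda,a}$ are then literally the same words as in Section~\ref{gen}, so the generating set of $WB_n'$ is $\{a_m,b_{m,\varepsilon},c_l,f_{m,\varepsilon},g_{m,l}\}$, and, exactly as in the passage preceding \thmref{nsp}, the relation $f_{m,0}f_{m,1}=1$ of \lemref{l7} lets one drop $f_{m,1}$ and keep $f_m=f_{m,0}$.

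By the Reidemeister--Schreier theorem, the defining relations of $WB_n'$ are the Reidemeister rewrites $\tau(\lambda R\lambda^{-1})$ over all $\lambda\in\Lambda$ and all defining relators $R$ of $WB_n$. The relators inherited from $VB_n$ contribute precisely the relations already assembled in \thmref{nsp}, so it remains only to rewrite the $\Lambda$-conjugates of $r_8$ and to show they give the six families of \lemref{l5.2}. Since $\Lambda=\{\sigma_1^i\}\cup\{\sigma_1^i\rho_1\}$, it suffices to rewrite $r_8$ itself, to rewrite $\rho_1 r_8\rho_1$, and then to conjugate both by $\sigma_1^{-m}$. Rewriting $r_8$ in its three shapes ($i=1$, $i=2$, $i>2$) yields the three words $b_{0,1}a_1f_{2,1}b_{0,0}^{-1}$, $f_{0,0}c_3b_{1,1}g_{2,3}b_{1,0}^{-1}c_3^{-1}$, $g_{0,i}c_{i+1}c_ig_{2,i+1}c_i^{-1}c_{i+1}^{-1}$ displayed before \lemref{con}. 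To conjugate these by $\rho_1$ one uses the rules of \lemref{l1} together with the three new identities $a_1^{\rho_1}=a_0a_1^{-1}a_0^{-1}$, $f_{2,1}^{\rho_1}=a_0a_1f_{2,0}a_1^{-1}a_0^{-1}$, $g_{2,i}^{\rho_1}=a_0a_1g_{2,i}a_1^{-1}a_0^{-1}$ of \lemref{con} (each obtained by a direct Reidemeister rewrite of the corresponding word in $\sigma_1,\rho_1$ and the $\rho_j$), possibly after first deriving a couple of further conjugates such as $b_{1,1}^{\rho_1}$ from these; the outcome is the three ``mirror'' words. Conjugating all six words by $\sigma_1^{-m}$ via $x_{i,\varepsilon}^{\sigma_1^{-m}}=x_{i+m,\varepsilon}$ from \lemref{l1}(1) then produces exactly the six displayed relations of \lemref{l5.2}.

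Combining the two blocks, $WB_n'$ is presented on $a_m,b_{m,\varepsilon},c_l,f_m,g_{m,l}$ ($m\in\mathbb{Z}$, $\varepsilon=0,1$, $2<l<n$) by the relations of \thmref{nsp} together with those of \lemref{l5.2}, which is the assertion. The step demanding the most care is the $\rho_1$-conjugation bookkeeping behind \lemref{con} and \lemref{l5.2}: one must correctly identify the Schreier coset representatives $\overline{\sigma_1^{a}\rho_1^{b}}$ appearing in the Reidemeister transform of each syllable, so that the $S_{\lambda,a}$-letters collapse to the stated monomials in $a_0,a_1$. Everything else is formal --- the transversal and the Schreier generators are inherited unchanged from the $VB_n$ computation, and the only genuinely new input is the rewrite of the single extra relator $r_8$.
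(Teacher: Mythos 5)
Your proposal is correct and follows essentially the same route as the paper: keep the Schreier transversal $\Lambda$ and the generators from the $VB_n$ computation (justified because the new relator $r_8$ dies in $\mathbb{Z}\times\mathbb{Z}_2$, so $WB_n/WB_n'$ is unchanged), inherit the relations of \thmref{nsp}, and add only the Reidemeister rewrites of $r_8$ and of $\rho_1 r_8\rho_1$ conjugated by $\sigma_1^{-m}$, which are exactly the six families of \lemref{l5.2}. The paper treats the corollary as an immediate consequence of that computation, and your write-up makes explicit the one point it leaves implicit, namely that the transversal and generating set carry over verbatim to the quotient.
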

\subsection{Presentation of $WB_3'$}
We have found a presentation of $VB_3'$. To get a presentation of $WB_3'$ we need to add two series of relations:
\begin{equation}\label{19}
b_{m,1} a_{m+1} f_{m+2,1} b_{m,0}^{-1} = 1,
\end{equation}
\begin{equation}\label{20}
b_{m,0} f_{m+2,0} a_{m+1}^{-1} b_{m,1}^{-1} = 1,
\end{equation}
that follow from Lemma \ref{l5.2}.

As in the case of $VB_3'$ we can remove the generator $f_{m,1}$, using the relation $f_{m,0} f_{m,1} = 1$. Then the relations (\ref{19})--(\ref{20}) have the form
\begin{equation}\label{21}
b_{m,1} a_{m+1} f_{m+2}^{-1} b_{m,0}^{-1} = 1,
\end{equation}
\begin{equation}\label{22}
b_{m,0} f_{m+2} a_{m+1}^{-1} b_{m,1}^{-1} = 1,
\end{equation}
where we denote $f_m = f_{m,0}$.

Using the relations
$$
b_{m,1} = f_m a_m f_{m+1}^{-1},
$$
that hold in $VB_3'$, we can remove $b_{m,1}$. Then the relations (\ref{21})--(\ref{22}) have the form
\begin{equation}\label{23}
f_{m} a_{m} f_{m+1}^{-1} a_{m+1} f_{m+2}^{-1} b_{m,0}^{-1} = 1,
\end{equation}
\begin{equation}\label{24}
b_{m,0} f_{m+2} a_{m+1}^{-1} f_{m+1} a_m^{-1} f_{m}^{-1} = 1,
\end{equation}
We see that the second relation is inverse to the first one. Hence, we can remove the second relation.

Next, using the relations $f_m^{-1} f_{m+1} b_{m,0}^{-1} = 1$, which hold in $VB_3'$, we can remove the generator $b_{m,0}$. Then (\ref{23}) has the form
\begin{equation}\label{25}
f_{m} a_{m} f_{m+1}^{-1} a_{m+1} f_{m+2}^{-1} f_{m+1}^{-1} f_m = 1.
\end{equation}
Using the presentation of $VB_3'$ we get

\begin{prop}
The group $WB_3'$ is  generated by elements
$$
a_m,\quad  f_m,\quad m \in \mathbb{Z},
$$
and is defined by relation:
\begin{equation}\label{26}
f_{m+1}^{-1} f_{m+2} f_{m+3}^{-1} f_{m+2} f_{m+1}^{-1} f_m = 1,
\end{equation}

\begin{equation}\label{27}
a_m f_{m+1} a_{m+1} f_{m+2}^{-1} a_{m+2} f_{m+3} a_{m+2}^{-1} f_{m+2}^{-1} a_{m+1}^{-1} f_{m+1} a_m^{-1} f_{m}^{-1}= 1,
\end{equation}

\begin{equation}\label{28}
f_{m}^3 = 1,
\end{equation}

\begin{equation}\label{29}
a_{m} f_{m+1}^{-1} a_{m+1} f_{m+2}^{-1} f_{m+1}^{-1} f_m^{-1} = 1.
\end{equation}
\end{prop}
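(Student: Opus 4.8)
The proposition asserts that $WB_3'$ has the claimed presentation, so the task is to verify that adding the welded relations \eqnref{19}--\eqnref{20} to the presentation of $VB_3'$ obtained in the preceding subsection yields exactly the presentation \eqnref{26}--\eqnref{29}. I would proceed by Tietze transformations, following the same elimination strategy already used for $VB_3'$. First I would record that $WB_3'$ is generated by $a_m, b_{m,\varepsilon}, f_{m,\varepsilon}$ ($m\in\Z$, $\varepsilon=0,1$) subject to the relations of $VB_3'$ together with \eqnref{19} and \eqnref{20}, which come from \lemref{l5.2} specialized to $n=3$ (so the relations involving $c_l$ and $g_{m,l}$ for $l>2$ simply do not occur).

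The elimination proceeds in four steps, each a standard Tietze move. Step one: use $f_{m,0}f_{m,1}=1$ from \lemref{l7} to remove $f_{m,1}=f_m^{-1}$ (writing $f_m=f_{m,0}$), turning \eqnref{19}--\eqnref{20} into \eqnref{21}--\eqnref{22}. Step two: use the relation $b_{m,1}=f_m a_m f_{m+1}^{-1}$ (valid in $VB_3'$, coming from relation \eqnref{9}) to remove the generators $b_{m,1}$; substituting into \eqnref{21}--\eqnref{22} gives \eqnref{23}--\eqnref{24}. Here one checks the small algebraic fact that \eqnref{24} is the inverse word of \eqnref{23}, so \eqnref{24} is redundant and can be dropped. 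Step three: use $f_m^{-1}f_{m+1}b_{m,0}^{-1}=1$ (relation \eqnref{14}) to remove $b_{m,0}=f_m^{-1}f_{m+1}$; substituting into \eqnref{23} yields \eqnref{25}, which after cyclic/index shifting is exactly \eqnref{29}. Step four: import the already-simplified presentation of $VB_3'$ in generators $a_m,f_m$ with relations \eqnref{16}--\eqnref{18}, which become \eqnref{26}--\eqnref{28}; the new welded relation contributes \eqnref{29}. Collecting everything gives the stated presentation.

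The only genuinely computational points — and the place where an error could creep in — are the bookkeeping in Steps two and three: one must substitute $b_{m,1}=f_m a_m f_{m+1}^{-1}$ and $b_{m,0}=f_m^{-1}f_{m+1}$ consistently, track the index shifts correctly (the relation \eqnref{19} involves $f_{m+2,1}$, hence $f_{m+2}^{-1}$ after Step one), and verify the claimed inverse-word relationship between \eqnref{23} and \eqnref{24} so that one of them can legitimately be discarded. None of this is conceptually hard; it is careful word manipulation of the kind already carried out for $VB_3'$, and the main obstacle is simply ensuring that the two welded relations collapse to a \emph{single} new relation \eqnref{29} rather than two independent ones. Once that is checked, the proposition follows directly by combining the simplified $VB_3'$ presentation with \eqnref{29}.
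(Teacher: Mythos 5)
Your proposal follows the paper's proof essentially verbatim: adjoin the two welded relations from Lemma \ref{l5.2}, eliminate $f_{m,1}$, $b_{m,1}$ and $b_{m,0}$ by the same Tietze moves used for $VB_3'$, observe that the second welded relation is the inverse of the first, and combine the resulting single relation with the $VB_3'$ presentation \eqnref{16}--\eqnref{18}. The only tiny point to make explicit is that passing from \eqnref{25} to \eqnref{29} uses a cyclic permutation \emph{together with} $f_m^3=1$ (so $f_m^2=f_m^{-1}$), not index shifting alone.
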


\bigskip  As consequence we get

 \begin{cor}\label{wb1}
$WB_3'$ is generated by $a_0, f_0, f_1, f_2$.
 \end{cor}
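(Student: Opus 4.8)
We prove the corollary by reducing, via Tietze transformations, the presentation of $WB_3'$ obtained in the preceding proposition until only $a_0, f_0, f_1, f_2$ survive as generators. First I would use relation \eqnref{26} to eliminate every $f_m$ with $m\notin\{0,1,2\}$. Solving \eqnref{26} for the highest-index letter gives
$$
f_{m+3}=f_{m+2}\,f_{m+1}^{-1}\,f_m\,f_{m+1}^{-1}\,f_{m+2},
$$
so applying this in turn for $m=0,1,2,\dots$ removes the generators $f_3,f_4,\dots$; solving the same relation for the lowest-index letter gives $f_m=f_{m+1}\,f_{m+2}^{-1}\,f_{m+3}\,f_{m+2}^{-1}\,f_{m+1}$, and applying this for $m=-1,-2,\dots$ removes $f_{-1},f_{-2},\dots$. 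After all these substitutions every $f_m$ has become a word in $f_0,f_1,f_2$.

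Next I would use relation \eqnref{29} to eliminate every $a_m$ with $m\neq 0$. Rewriting \eqnref{29} as
$$
a_{m+1}=f_{m+1}\,a_m^{-1}\,f_m\,f_{m+1}\,f_{m+2},
$$
and applying it for $m=0,1,2,\dots$ expresses $a_1,a_2,\dots$ successively as words in $a_0$ and the $f$'s; rewriting it instead as $a_m=f_m\,f_{m+1}\,f_{m+2}\,a_{m+1}^{-1}\,f_{m+1}$ and applying it for $m=-1,-2,\dots$ removes $a_{-1},a_{-2},\dots$ in the same way. What remains is exactly the generating set $\{a_0,f_0,f_1,f_2\}$ (relations \eqnref{27} and \eqnref{28}, for all $m$, becoming relations among these four elements), which is the assertion of the corollary.

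The only point that needs care is the legitimacy of performing infinitely many successive eliminations: relation \eqnref{26} (resp. \eqnref{29}) at each index $m$ is used exactly once, to remove exactly one generator, and after the earlier substitutions it still has the form ``one generator equals a word in the already-reduced generators,'' so the induction runs without circularity in both the increasing and the decreasing direction. There is no substantial obstacle here, only bookkeeping. I would also record in passing that relation \eqnref{28} for $m=0,1,2$ gives $f_0^3=f_1^3=f_2^3=1$, which will be needed in the subsequent computation of $WB_3'/WB_3''$.
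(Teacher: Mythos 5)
Your proposal is correct and follows essentially the same route as the paper: the paper's proof likewise uses the relations \eqnref{26} to express every $f_k$ with $k>2$ or $k<0$ in terms of $f_0,f_1,f_2$, and the relations \eqnref{29} to express every $a_l$ with $l\neq 0$ in terms of $a_0$ and the $f$'s. Your explicit solved forms of the two relations and the remark about the well-foundedness of the infinite sequence of eliminations are just a more detailed write-up of the same argument.
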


\begin{proof}
From the set of relations (\ref{26}) we can express the generators $f_k$, where $k > 2$ or $k < 0$, as words in the generators $f_0, f_1, f_2$ and analogously,
from the set of relations (\ref{29}) we can express the generators $a_l$, where $l \not= 0$, as words in the generators $a_0, f_0, f_1, f_2$.
\end{proof}

\begin{cor}\label{wb11}
 $WB_3'/WB_3''$ is isomorphic to the direct sum
$$
\mathbb{Z}_3 \oplus \mathbb{Z}_3 \oplus \mathbb{Z}_3 \oplus \mathbb{Z}.
$$
\end{cor}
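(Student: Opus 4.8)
The plan is to compute $WB_3'/WB_3''$ as the abelianization of the group presented in the Proposition, working directly with the generators $a_m,f_m$ ($m\in\mathbb Z$) and the relations \eqnref{26}--\eqnref{29}. First I would pass to the abelianized presentation: write the group additively, so that each relator becomes a linear equation in the symbols $a_m,f_m$, and then simplify the resulting presentation of an abelian group by Tietze moves, eliminating redundant generators and relations.

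Abelianizing the relators: \eqnref{28} gives $3f_m=0$; \eqnref{26} gives $f_m-2f_{m+1}+2f_{m+2}-f_{m+3}=0$; in \eqnref{27} every $a$-letter cancels against its inverse, while the $f$-letters contribute $-f_m+2f_{m+1}-2f_{m+2}+f_{m+3}=0$, so \eqnref{27} is $-1$ times \eqnref{26} and hence carries no new information abelianly; and \eqnref{29} gives $a_m+a_{m+1}=f_m+2f_{m+1}+f_{m+2}$. Using the abelianized \eqnref{29} I would eliminate every $a_m$ with $m\neq 0$: for $m\ge 0$ it expresses $a_{m+1}$ in terms of $a_m$ and the $f$'s, and for $m<0$ it expresses $a_{m}$ in terms of $a_{m+1}$ and the $f$'s, so iterating writes all $a_m$ ($m\neq0$) as $\mathbb Z$-combinations of $a_0$ and the $f_m$, and once all instances of \eqnref{29} are used no relation remains on $a_0$. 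Thus $WB_3'/WB_3''\cong\langle a_0\rangle\oplus A$, where $\langle a_0\rangle\cong\mathbb Z$ is free and $A$ is the abelian group on $\{f_m:m\in\mathbb Z\}$ subject only to $3f_m=0$ and $f_m-2f_{m+1}+2f_{m+2}-f_{m+3}=0$.

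It remains to show $A\cong\mathbb Z_3\oplus\mathbb Z_3\oplus\mathbb Z_3$. Since $3f_m=0$, $A$ is a $\mathbb Z_3$-vector space, and modulo $3$ the second relation reads $f_{m+3}=f_m+f_{m+1}-f_{m+2}$, which (the coefficient of the extreme term being a unit) can also be solved backwards as $f_m=-f_{m+1}+f_{m+2}+f_{m+3}$; hence $f_0,f_1,f_2$ generate $A$ and $\dim_{\mathbb Z_3}A\le 3$. For the reverse bound I would exhibit a surjection $A\twoheadrightarrow\mathbb Z_3^3$: take the $\mathbb Z_3^3$-valued sequence $(w_m)_{m\in\mathbb Z}$ determined by $w_0=(1,0,0)$, $w_1=(0,1,0)$, $w_2=(0,0,1)$ and the recurrence $w_{m+3}=w_m+w_{m+1}-w_{m+2}$ (extended in both directions); sending $f_m\mapsto w_m$ kills both families of relators by construction, and its image contains a basis of $\mathbb Z_3^3$, so $\dim_{\mathbb Z_3}A\ge 3$. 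Therefore $A\cong\mathbb Z_3^{3}$ and $WB_3'/WB_3''\cong\mathbb Z_3\oplus\mathbb Z_3\oplus\mathbb Z_3\oplus\mathbb Z$.

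The computation is essentially routine linear algebra; the only points that need care, rather than any real obstacle, are verifying that \eqnref{27} is genuinely redundant in the abelianization and that the elimination of the $a_m$ for $m\neq0$ via \eqnref{29} leaves $a_0$ unconstrained, so that the $\mathbb Z$ summand is present and splits off cleanly from the $f$-part.
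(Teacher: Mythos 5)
Your proof is correct and follows essentially the same route as the paper: abelianize the presentation of $WB_3'$ from the Proposition, observe that relation \eqnref{27} becomes redundant, use \eqnref{29} to eliminate all $a_m$ with $m\neq 0$, and use \eqnref{26} together with $3f_m=0$ to reduce to $f_0,f_1,f_2$. In fact you are slightly more careful than the paper, which simply asserts the resulting presentation on $a_0,f_0,f_1,f_2$; your explicit surjection onto $\mathbb{Z}_3^{3}$ via the recurrent sequence $(w_m)$ supplies the lower bound on the $\mathbb{Z}_3$-dimension that the paper leaves implicit.
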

\begin{proof}
In the quotient  $WB_3'/WB_3''$ the relations have the form
$$
f_m f_{m+1} = f_{m+2} f_{m+3},
$$
$$
f_{m}^3 = 1,
$$
$$
a_{m}  a_{m+1} = f_m f_{m+1}^{-1} f_{m+2}.
$$
In the generators  $a_0, f_0, f_1, f_2$ we have relations
$$
f_{0}^3 = f_{1}^3 =f_{2}^3 =1.
$$
This completes the proof. \end{proof}


\subsection{The commutator subgroup $WB_4'$}

In $WB_4'$ we have relations of  $VB_4'$ and the following relations:
$$
b_{m,1} a_{m+1} f_{m+2,1} b_{m,0}^{-1} = 1,
$$
$$
b_{m,0}  f_{m+2,0} a_{m+1}^{-1}  b_{m,1}^{-1} = 1,
$$
$$
f_{m,0} c_3 b_{m+1,1} g_{m+2,3} b_{m+1,0}^{-1} c_3^{-1} = 1,
$$
$$
f_{m,1} c_3 f_{m+1,0} a_{m+1} g_{m+2,3} b_{m+1,1}^{-1} c_3^{-1} = 1.
$$
Excluding the generators
$$
b_{m,0}=f_{m}^{-1}f_{m+1},\quad
b_{m,1}=f_{m} a_{m}f_{m+1}^{-1},\quad
f_{m,1}=f_{m,0}^{-1}=f_{m}^{-1}
$$
from these relations.
We get relations
$$
f_{m} a_{m}f_{m+1}^{-1} a_{m+1} f_{m+2}^{-1} f_{m+1}^{-1} f_{m}= 1,
$$
$$
f_{m}^{-1}f_{m+1}  f_{m+2} a_{m+1}^{-1} f_{m+1} a_{m}^{-1} f_{m}^{-1}  = 1,
$$
$$
f_{m} c_3 f_{m+1} a_{m+1} f_{m+2}^{-1} g_{m+2,3}  f_{m+2}^{-1} f_{m+1} c_3^{-1} = 1,
$$
$$
f_{m}^{-1} c_3 f_{m+1} a_{m+1} g_{m+2,3} f_{m+2} a_{m+1}^{-1} f_{m+1}^{-1} c_3^{-1} = 1.
$$
The second relation is inverse of the first relation. Hence, we can keep only the first relation. Rewrite it in the form
$$
a_{m}f_{m+1}^{-1} a_{m+1} = f_{m} f_{m+1} f_{m+2} .
$$
Rewrite the third and the forth relations in the form
$$
c_3 f_{m+1} a_{m+1} f_{m+2}^{-1} g_{m+2,3}  f_{m+2}^{-1} f_{m+1} c_3^{-1}f_{m}  = 1,
$$
$$
 c_3 f_{m+1} a_{m+1} g_{m+2,3} f_{m+2} a_{m+1}^{-1} f_{m+1}^{-1} c_3^{-1}f_{m}^{-1} = 1.
$$
From these relations:
$$
f_{m+2}^{-1} g_{m+2,3}  f_{m+2}^{-1} f_{m+1} c_3^{-1}f_{m}  =
 g_{m+2,3} f_{m+2} a_{m+1}^{-1} f_{m+1}^{-1} c_3^{-1}f_{m}^{-1}.
$$
Since in $VB_4'$ holds
$$
(g_{m+2,3}  f_{m+2})^3=1,\quad g_{m+2,3}^2=1,
$$
then
$$
(g_{m+2,3}  f_{m+2})^{-2}  f_{m+2}^{-1} f_{m+1} c_3^{-1}f_{m}  =
  a_{m+1}^{-1} f_{m+1}^{-1} c_3^{-1}f_{m}^{-1}
$$
and
$$
g_{m+2,3}  f_{m+2}  f_{m+2}^{-1} f_{m+1} c_3^{-1}f_{m}  =
  a_{m+1}^{-1} f_{m+1}^{-1} c_3^{-1}f_{m}^{-1}.
$$
Therefore,
$$
g_{m+2,3}   =
  a_{m+1}^{-1} f_{m+1}^{-1} c_3^{-1}f_{m} c_3 f_{m+1}^{-1}.
$$
Including this expression of  $g_{m+2,3}$ in the forth relation:
$$
f_{m}^{-1} c_3 f_{m+1} a_{m+1} a_{m+1}^{-1} f_{m+1}^{-1} c_3^{-1}f_{m}
c_3 f_{m+1}^{-1} f_{m+2} a_{m+1}^{-1} f_{m+1}^{-1} c_3^{-1} = 1.
$$
We get after cancelation
$$
a_{m+1}=f_{m+1} f_{m+2}.
$$
Including this expression of $a_{m+1}$ in the expression for
$g_{m+2,3}$, we get
$$
g_{m+2,3} =  f_{m+2}^{-1} f_{m+1}^{-2} c_3^{-1}f_{m} c_3 f_{m+1}^{-1}
$$
or
$$
g_{m+2,3} =  f_{m+2}^{-1} f_{m+1} c_3^{-1}f_{m} c_3 f_{m+1}^{-1}.
$$
Next, the relation  $a_{m}f_{m+1}^{-1} a_{m+1} = f_{m} f_{m+1} f_{m+2}$
after substitution $a_{m}=f_{m} f_{m+1}$, $a_{m+1}=f_{m+1} f_{m+2}$
becomes an identity.

Hence, the new relations in  $WB_4'$ are equal to relations
$$
g_{m+2,3} =  f_{m+2}^{-1} f_{m+1} c_3^{-1}f_{m} c_3 f_{m+1}^{-1},
$$
$$
a_{m}=f_{m} f_{m+1}.
$$

The full set of relations in $WB_4'$ has the form:
$$
f_{m} f_{m+1}^{-1} f_{m+2}= f_{m+1} f_{m+2}^{-1} f_{m+3},
$$
$$
f_{m}^{-1} f_{m+1}c_3f_{m+2}^{-1} f_{m+3}=c_3 f_{m+1}^{-1} f_{m+2} c_3,
$$
$$
a_m f_{m+1}a_{m+1} f_{m+2}^{-1}a_{m+2}=f_{m}a_m f_{m+1}^{-1}a_{m+1}f_{m+2}a_{m+2} f_{m+3}^{-1},
$$
$$
f_{m}a_m f_{m+1}^{-1}a_m^{-1}c_3 a_{m+1}f_{m+2}a_{m+2} f_{m+3}^{-1}a_{m+2}^{-1}=
    c_3 f_{m+1}a_{m+1} f_{m+2}^{-1} a_{m+1}^{-1} a_m^{-1} c_3 a_{m+1},
$$
$$
g_{m,3}^2=1,
$$
$$
f_m^{3}=1,
$$
$$
(f_m g_{m,3})^{3}=1,
$$
$$
g_{m+1,3}a_{m}^{-1}g_{m,3}=1,
$$
$$
f_{m}g_{m,3}f_{m}^{-1} f_{m+1}g_{m+1,3}f_{m+1}^{-1}=c_3^{-1},
$$
$$
f_{m}^{-1}g_{m,3}f_{m}a_m f_{m+1}^{-1}g_{m+1,3}f_{m+1}=c_3,
$$
$$
g_{m+2,3} =  f_{m+2}^{-1} f_{m+1} c_3^{-1}f_{m} c_3 f_{m+1}^{-1},
$$
$$
a_{m}=f_{m} f_{m+1}.
$$

Transform these relations, excluding
 $a_m$ and $g_{m,3}$.

1) The relation
$a_m f_{m+1}a_{m+1} f_{m+2}^{-1}a_{m+2}=f_{m}a_m f_{m+1}^{-1}a_{m+1}f_{m+2}a_{m+2} f_{m+3}^{-1}$,
after substitution
$$
a_{m}=f_{m} f_{m+1},\quad a_{m+1}=f_{m+1} f_{m+2},\quad a_{m+2}=f_{m+2} f_{m+3}
$$
has the form
$$
f_{m} f_{m+1} f_{m+1}f_{m+1} f_{m+2} f_{m+2}^{-1}f_{m+2} f_{m+3}=
$$
$$
=f_{m}f_{m} f_{m+1} f_{m+1}^{-1}f_{m+1} f_{m+2}f_{m+2}f_{m+2} f_{m+3} f_{m+3}^{-1}.
$$
Using the relation  $f_{m}^3=1$, we get
$$
f_{m} f_{m+1} =f_{m+2} f_{m+3}.
$$

2) The relation
$$
f_{m}a_m f_{m+1}^{-1}a_m^{-1}c_3 a_{m+1}f_{m+2}a_{m+2} f_{m+3}^{-1}a_{m+2}^{-1}=
    c_3 f_{m+1}a_{m+1} f_{m+2}^{-1} a_{m+1}^{-1} a_m^{-1} c_3 a_{m+1},
$$
after substitution
$$
a_{m}=f_{m} f_{m+1},\quad a_{m+1}=f_{m+1} f_{m+2},\quad a_{m+2}=f_{m+2} f_{m+3}
$$
has the form
$$
f_{m}f_{m} f_{m+1} f_{m+1}^{-1}f_{m+1}^{-1}f_{m}^{-1} c_3 f_{m+1} f_{m+2}f_{m+2}
f_{m+2} f_{m+3} f_{m+3}^{-1} f_{m+3}^{-1}f_{m+2}^{-1}=
$$
$$
  =c_3 f_{m+1}f_{m+1} f_{m+2} f_{m+2}^{-1} f_{m+2}^{-1}f_{m+1}^{-1}
  f_{m+1}^{-1}f_{m}^{-1} c_3 f_{m+1} f_{m+2},
$$
or, after cancelation and using the relation  $f_{m}^3=1$ we get
$$
f_{m}^{-1} f_{m+1}^{-1} f_{m}^{-1} c_3 f_{m+1} f_{m+3}^{-1}f_{m+2}=
 c_3 f_{m+1}^{-1} f_{m+2}^{-1} f_{m+1}f_{m}^{-1} c_3 f_{m+1}.
$$

3)  The relation $g_{m+2,3}^2=1$ after substitution
$$
g_{m+2,3} =  f_{m+2}^{-1} f_{m+1} c_3^{-1}f_{m} c_3 f_{m+1}^{-1},
$$
has the form
$$
f_{m+2}^{-1} f_{m+1} c_3^{-1}f_{m} c_3 f_{m+1}^{-1}f_{m+2}^{-1} f_{m+1} c_3^{-1}f_{m} c_3 f_{m+1}^{-1}=1.
$$

4)  The relation $(f_m g_{m,3})^{3}=1$ after substitution
$$
g_{m,3} =  f_{m}^{-1} f_{m-1} c_3^{-1}f_{m-2} c_3 f_{m-1}^{-1},
$$
has the form
$$
(f_m f_{m}^{-1} f_{m-1} c_3^{-1}f_{m-2} c_3 f_{m-1}^{-1})^{3}=1
$$
and is identity since  $f_{m}^3=1$.

5) The relation $g_{m+1,3}a_{m}^{-1}g_{m,3}=1$ after substitution
$$
g_{m+1,3} =  f_{m+1}^{-1} f_{m} c_3^{-1}f_{m-1} c_3 f_{m}^{-1}, \quad
g_{m,3} =  f_{m}^{-1} f_{m-1} c_3^{-1}f_{m-2} c_3 f_{m-1}^{-1}, \quad
a_{m}=f_{m} f_{m+1}
$$
has the form
$$
f_{m+1}^{-1} f_{m} c_3^{-1}f_{m-1} c_3 f_{m}^{-1} f_{m+1}^{-1}f_{m}^{-1}
f_{m}^{-1} f_{m-1} c_3^{-1}f_{m-2} c_3 f_{m-1}^{-1}=1.
$$
Using the relation  $f_{m}^3=1$ and changing the index  $m$ on $m+1$, we get
$$
f_{m+2}^{-1} f_{m+1} c_3^{-1}f_{m} c_3 f_{m+1}^{-1} f_{m+2}^{-1}f_{m+1}
f_{m} c_3^{-1}f_{m-1} c_3 f_{m}^{-1}=1.
$$

6) The relation $f_{m}g_{m,3}f_{m}^{-1} f_{m+1}g_{m+1,3}f_{m+1}^{-1}=c_3^{-1}$
after substitution
$$
g_{m+1,3} =  f_{m+1}^{-1} f_{m} c_3^{-1}f_{m-1} c_3 f_{m}^{-1}, \quad
g_{m,3} =  f_{m}^{-1} f_{m-1} c_3^{-1}f_{m-2} c_3 f_{m-1}^{-1}, \quad
$$
has the form
$$
f_{m}f_{m}^{-1} f_{m-1} c_3^{-1}f_{m-2} c_3 f_{m-1}^{-1}f_{m}^{-1} f_{m+1}
f_{m+1}^{-1} f_{m} c_3^{-1}f_{m-1} c_3 f_{m}^{-1}f_{m+1}^{-1}=c_3^{-1}
$$
or after cancelation
$$
f_{m-1} c_3^{-1}f_{m-2} c_3 f_{m-1}^{-1}c_3^{-1}f_{m-1} c_3 f_{m}^{-1}f_{m+1}^{-1}=c_3^{-1}.
$$

7) The relation $f_{m}^{-1}g_{m,3}f_{m}a_m f_{m+1}^{-1}g_{m+1,3}f_{m+1}=c_3$
after substitution
$$
g_{m+1,3} =  f_{m+1}^{-1} f_{m} c_3^{-1}f_{m-1} c_3 f_{m}^{-1}, \quad
g_{m,3} =  f_{m}^{-1} f_{m-1} c_3^{-1}f_{m-2} c_3 f_{m-1}^{-1}, \quad
a_{m}=f_{m} f_{m+1}
$$
has the form
$$
f_{m}^{-1}f_{m}^{-1} f_{m-1} c_3^{-1}f_{m-2} c_3 f_{m-1}^{-1}f_{m}
f_{m} f_{m+1} f_{m+1}^{-1}f_{m+1}^{-1} f_{m} c_3^{-1}f_{m-1} c_3 f_{m}^{-1}f_{m+1}=c_3
$$
or, after cancelation and using the relation $f_{m}^3=1$ we get
$$
f_{m} f_{m-1} c_3^{-1}f_{m-2} c_3 f_{m-1}^{-1}f_{m}^{-1} f_{m+1}^{-1} f_{m}
c_3^{-1}f_{m-1} c_3 f_{m}^{-1}f_{m+1}=c_3.
$$

Hence, we have proven

\begin{theorem} \label{wb3}
The group  $WB_4'$ is generated by  $c_3$, $f_m$, $m \in \mathbb{Z}$,
and is defined by the relations
$$
f_{m} f_{m+1}^{-1} f_{m+2}= f_{m+1} f_{m+2}^{-1} f_{m+3},
$$
$$
f_{m}^{-1} f_{m+1}c_3f_{m+2}^{-1} f_{m+3}=c_3 f_{m+1}^{-1} f_{m+2} c_3,
$$
$$
f_{m} f_{m+1} =f_{m+2} f_{m+3},
$$
$$
f_{m}^{-1} f_{m+1}^{-1} f_{m}^{-1} c_3 f_{m+1} f_{m+3}^{-1}f_{m+2}=
 c_3 f_{m+1}^{-1} f_{m+2}^{-1} f_{m+1}f_{m}^{-1} c_3 f_{m+1}.
$$
$$
f_{m+2}^{-1} f_{m+1} c_3^{-1}f_{m} c_3 f_{m+1}^{-1}f_{m+2}^{-1} f_{m+1} c_3^{-1}f_{m} c_3 f_{m+1}^{-1}=1,
$$
$$
f_m^{3}=1,
$$
$$
f_{m+2}^{-1} f_{m+1} c_3^{-1}f_{m} c_3 f_{m+1}^{-1} f_{m+2}^{-1}f_{m+1}
f_{m} c_3^{-1}f_{m-1} c_3 f_{m}^{-1}=1,
$$
$$
f_{m-1} c_3^{-1}f_{m-2} c_3 f_{m-1}^{-1}c_3^{-1}f_{m-1} c_3 f_{m}^{-1}f_{m+1}^{-1}=c_3^{-1},
$$
$$
f_{m} f_{m-1} c_3^{-1}f_{m-2} c_3 f_{m-1}^{-1}f_{m}^{-1} f_{m+1}^{-1} f_{m}
c_3^{-1}f_{m-1} c_3 f_{m}^{-1}f_{m+1}=c_3.
$$
\end{theorem}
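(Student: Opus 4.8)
The plan is to begin from the presentation of $WB_4'$ assembled just above the statement, namely all relations of $VB_4'$ (the $n=4$ specialization of \thmref{nsp}) together with the four families
$$
b_{m,1} a_{m+1} f_{m+2,1} b_{m,0}^{-1} = 1,\qquad b_{m,0} f_{m+2,0} a_{m+1}^{-1} b_{m,1}^{-1} = 1,
$$
$$
f_{m,0} c_3 b_{m+1,1} g_{m+2,3} b_{m+1,0}^{-1} c_3^{-1} = 1,\qquad f_{m,1} c_3 f_{m+1,0} a_{m+1} g_{m+2,3} b_{m+1,1}^{-1} c_3^{-1} = 1
$$
coming from \lemref{l5.2}, and then to shorten the generating set by a sequence of Tietze transformations, discarding redundant relations as they appear.

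First I would use the identities $f_{m,1}=f_m^{-1}$, $b_{m,0}=f_m^{-1}f_{m+1}$ and $b_{m,1}=f_m a_m f_{m+1}^{-1}$ --- all consequences of the $VB_4'$-relations --- to eliminate $f_{m,1}$, $b_{m,0}$ and $b_{m,1}$, writing $f_m=f_{m,0}$. After this substitution the second new family becomes the formal inverse of the first and is dropped, and the surviving new relations involve only $a_m$, $f_m$, $c_3$ and $g_{m+2,3}$; the first one reads $a_m f_{m+1}^{-1} a_{m+1} = f_m f_{m+1} f_{m+2}$.

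The crucial step is eliminating $g_{m+2,3}$. The (rewritten) third and fourth new relations each terminate in $c_3^{-1}f_m^{\pm1}$, so eliminating that tail between them gives
$$
f_{m+2}^{-1} g_{m+2,3} f_{m+2}^{-1} f_{m+1} c_3^{-1} f_m = g_{m+2,3} f_{m+2} a_{m+1}^{-1} f_{m+1}^{-1} c_3^{-1} f_m^{-1}.
$$
Now I would invoke the $VB_4'$-relations $g_{m+2,3}^2=1$ and $(g_{m+2,3}f_{m+2})^3=1$, which together give $(g_{m+2,3}f_{m+2})^{-2}=g_{m+2,3}f_{m+2}$; rewriting the left-hand side above in terms of $(g_{m+2,3}f_{m+2})^{-2}$ and applying this collapse leaves a single occurrence of $g_{m+2,3}$, whence
$$
g_{m+2,3} = a_{m+1}^{-1} f_{m+1}^{-1} c_3^{-1} f_m c_3 f_{m+1}^{-1}.
$$
Feeding this back into the fourth relation and cancelling forces $a_{m+1}=f_{m+1}f_{m+2}$, hence $a_m=f_m f_{m+1}$; then $f_{m+1}^3=1$ simplifies the formula to $g_{m+2,3}=f_{m+2}^{-1}f_{m+1}c_3^{-1}f_m c_3 f_{m+1}^{-1}$, and the relation $a_m f_{m+1}^{-1} a_{m+1} = f_m f_{m+1} f_{m+2}$ becomes an identity.

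Finally I would substitute $a_m=f_m f_{m+1}$ and $g_{m,3}=f_m^{-1}f_{m-1}c_3^{-1}f_{m-2}c_3 f_{m-1}^{-1}$ into each remaining relation of $VB_4'$ and reduce modulo $f_m^3=1$: the braid-type relation among the $a_m$ collapses to $f_m f_{m+1}=f_{m+2}f_{m+3}$, the relation $(f_m g_{m,3})^3=1$ becomes trivial, and $g_{m,3}^2=1$, $g_{m+1,3}a_m^{-1}g_{m,3}=1$, the two relations linking $g$'s to $c_3^{\pm1}$, and the mixed $c_3$-relation each translate into one of the displayed relations. Collecting the survivors, together with $f_m^3=1$ and the $c_3$-commutation relation, gives exactly the asserted presentation. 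The main obstacle is the bookkeeping of the $g_{m+2,3}$-elimination: one must use the order-$2$ and order-$3$ relations in precisely the right way to reduce $(g_{m+2,3}f_{m+2})^{\pm2}$ to a length-two word, and then check that the resulting formula for $g_{m+2,3}$ is compatible with both new relations simultaneously --- this is what pins down $a_{m+1}$ --- after which everything downstream is routine cancellation.
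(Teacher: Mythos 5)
Your proposal is correct and follows essentially the same route as the paper: the same Tietze eliminations of $f_{m,1}$, $b_{m,0}$, $b_{m,1}$, the same key step of combining the two $c_3$-relations and using $g_{m+2,3}^2=(g_{m+2,3}f_{m+2})^3=1$ to solve for $g_{m+2,3}$ and then for $a_{m+1}=f_{m+1}f_{m+2}$, and the same final substitution into the $VB_4'$ relations reduced modulo $f_m^3=1$.
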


\begin{cor}\label{wb2}
The group $WB_4'$ is generated by  $c_3$, $f_0$, $f_1$, $f_2$.
\end{cor}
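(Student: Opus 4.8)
The plan is to start from the finite (infinitely-generated) presentation of $WB_4'$ recorded in \thmref{wb3} and apply Tietze transformations to delete every generator $f_m$ with $m \notin \{0,1,2\}$, leaving only $c_3, f_0, f_1, f_2$. The engine for this is the relation
$$
f_m f_{m+1} = f_{m+2} f_{m+3}
$$
of \thmref{wb3}, which we rewrite in the two equivalent forms $f_{m+3} = f_{m+2}^{-1} f_m f_{m+1}$ and $f_m = f_{m+2} f_{m+3} f_{m+1}^{-1}$. The first form expresses the ``top'' generator $f_{m+3}$ as a word in the three consecutive generators $f_m, f_{m+1}, f_{m+2}$; the second expresses the ``bottom'' generator $f_m$ as a word in $f_{m+1}, f_{m+2}, f_{m+3}$.

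First I would run the forward recursion. Using $f_{m+3} = f_{m+2}^{-1} f_m f_{m+1}$ with $m = 0$ deletes $f_3$, substituting the word $f_2^{-1} f_0 f_1$; then $m = 1$ deletes $f_4$; and so on, at each step composing with the substitutions already made so that the deleted generator is replaced by a word in $f_0, f_1, f_2$ alone. A trivial induction on $k$ shows that every $f_k$ with $k \geq 3$ becomes such a word. Symmetrically, running the backward recursion $f_m = f_{m+2} f_{m+3} f_{m+1}^{-1}$ with $m = -1, -2, \ldots$ in turn deletes $f_{-1}, f_{-2}, \ldots$, again expressing each in terms of $f_0, f_1, f_2$. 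After all these Tietze deletions the surviving generating set is exactly $\{\,c_3, f_0, f_1, f_2\,\}$; the remaining relations of \thmref{wb3} are simply rewritten over these four generators (yielding, in particular, a finite presentation), and since Tietze transformations preserve the isomorphism type, this proves the corollary.

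I do not expect a genuine obstacle here; the one point that must be checked is that the chosen relation really does have the shape ``extreme generator $=$ word in its three consecutive neighbours'' on \emph{both} sides, so that the elimination can be iterated indefinitely without ever reintroducing an already-deleted generator. This is immediate from the displayed relation above. (One could run the same argument using instead the relation $f_m f_{m+1}^{-1} f_{m+2} = f_{m+1} f_{m+2}^{-1} f_{m+3}$ of \thmref{wb3}, but $f_m f_{m+1} = f_{m+2} f_{m+3}$ gives the shortest substitutions and is parallel to the reductions already used in \lemref{c1} and \corref{wb1}.)
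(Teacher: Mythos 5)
Your proposal is correct and follows essentially the same route as the paper: a Tietze elimination of all $f_m$ with $m\notin\{0,1,2\}$ via a three-term recursion relation from \thmref{wb3} (the paper happens to invoke $f_{m}f_{m+1}^{-1}f_{m+2}=f_{m+1}f_{m+2}^{-1}f_{m+3}$ rather than $f_mf_{m+1}=f_{m+2}f_{m+3}$, but both appear in that presentation and the argument is identical). No gap.
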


Indeed, using the relations
$$
f_{m} f_{m+1}^{-1} f_{m+2}= f_{m+1} f_{m+2}^{-1} f_{m+3},
$$
we can save from the generators $f_m$, $m \in \mathbb{Z}$, only the relations
$f_0$, $f_1$, $f_2$.

\begin{cor} \label{wb22}
$WB_4'/WB_4''\cong \mathbb{Z}_3$.
\end{cor}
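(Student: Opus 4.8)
The plan is to obtain $WB_4'/WB_4''$ as the abelianization of the presentation in \thmref{wb3}. I would write the group operation additively and take as generators the images of $c_3$ and of all the $f_m$, $m\in\mathbb{Z}$; by \corref{wb2} it suffices to keep $c_3,f_0,f_1,f_2$, which makes the computation finite. Each defining relation of \thmref{wb3} becomes a linear relation, and the three one needs are the following. From $f_m^{3}=1$ one gets $3f_m=0$ for every $m$. The relation $f_{m} f_{m-1} c_3^{-1}f_{m-2} c_3 f_{m-1}^{-1}f_{m}^{-1} f_{m+1}^{-1} f_{m} c_3^{-1}f_{m-1} c_3 f_{m}^{-1}f_{m+1}=c_3$ abelianizes, after every $c_3^{\pm1}$ and the obvious $f_m^{\pm1}$ cancel, to $f_{m-2}+f_{m-1}=c_3$, i.e. $f_k+f_{k+1}=c_3$ for all $k$. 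Finally the relation $f_{m-1} c_3^{-1}f_{m-2} c_3 f_{m-1}^{-1}c_3^{-1}f_{m-1} c_3 f_{m}^{-1}f_{m+1}^{-1}=c_3^{-1}$ abelianizes to $f_{m-2}+f_{m-1}-f_m-f_{m+1}+c_3=0$, whose left-hand side equals $c_3-c_3+c_3=c_3$ by the previous relation; hence $c_3=0$, and therefore $f_{k+1}=-f_k$ for every $k$. Consequently $WB_4'/WB_4''$ is cyclic, generated by the image $\bar f_0$ of $f_0$, with $3\bar f_0=0$; in particular it is a quotient of $\mathbb{Z}_3$.

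To finish I would check that no further collapse is forced, i.e. that $\bar f_0$ has order exactly $3$. Substituting $c_3=0$ and $f_{m+1}=-f_m$ into the abelianized forms of the remaining relations of \thmref{wb3} — namely $f_{m} f_{m+1}^{-1} f_{m+2}= f_{m+1} f_{m+2}^{-1} f_{m+3}$, $f_{m}^{-1} f_{m+1}c_3f_{m+2}^{-1} f_{m+3}=c_3 f_{m+1}^{-1} f_{m+2} c_3$, $f_{m} f_{m+1} =f_{m+2} f_{m+3}$, the long relation $f_{m}^{-1} f_{m+1}^{-1} f_{m}^{-1} c_3 f_{m+1} f_{m+3}^{-1}f_{m+2}= c_3 f_{m+1}^{-1} f_{m+2}^{-1} f_{m+1}f_{m}^{-1} c_3 f_{m+1}$, and the two relations $f_{m+2}^{-1} f_{m+1} c_3^{-1}f_{m} c_3 f_{m+1}^{-1}f_{m+2}^{-1} f_{m+1} c_3^{-1}f_{m} c_3 f_{m+1}^{-1}=1$ and $f_{m+2}^{-1} f_{m+1} c_3^{-1}f_{m} c_3 f_{m+1}^{-1} f_{m+2}^{-1}f_{m+1} f_{m} c_3^{-1}f_{m-1} c_3 f_{m}^{-1}=1$ — each reduces to an integer multiple of $f_m$ with coefficient divisible by $3$ (the coefficients come out as $\pm3$ or $\pm6$), hence each is a consequence of $3f_m=0$. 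Thus the abelianized presentation is equivalent to $\langle\,\bar f_0 \mid 3\bar f_0=0\,\rangle$, so $WB_4'/WB_4''\cong\mathbb{Z}_3$. If one prefers an independent certificate of non-triviality, the same arithmetic shows that $c_3\mapsto 0$, $f_m\mapsto(-1)^m$ extends to an epimorphism $WB_4'\to\mathbb{Z}_3$, which combined with the previous paragraph gives the isomorphism.

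The only genuinely delicate point is the bookkeeping in abelianizing the longer relations: there is a good deal of cancellation among the $c_3^{\pm1}$'s and among consecutive $f_m^{\pm1}$'s, and I would need to be careful that the three singled-out relations really force both $c_3=0$ \emph{and} $f_{m+1}=-f_m$, and that none of the remaining relations secretly imposes $2\bar f_0=0$ or $\bar f_0=0$. Carrying out the whole reduction with only the four generators $c_3,f_0,f_1,f_2$ from \corref{wb2} keeps everything finite and verifiable by hand, and is the approach I would use.
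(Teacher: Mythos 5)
Your proposal is correct and follows essentially the same route as the paper: abelianize the presentation of $WB_4'$ from \thmref{wb3}, extract $c_3=0$, $f_{m}+f_{m+1}=0$ and $3f_m=0$, and check the remaining relations impose nothing new. You simply carry out explicitly (and verify the non-collapse of) what the paper's one-line proof asserts.
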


Indeed, considering relations of $WB_4'$ by modulo $WB_4''$
we see that $f_mf_{m+1}=1$, $f_m^{3}=1$ and  $c_3=1$.

\subsection{The commutator subgroup $WB_n'$ for $n \geq 5$}

\begin{theorem} \label{wb31}
The group $WB_n'$, $n \geq 5$,  is generated by $n$ elements
$f_0$, $f_1$, $f_2$,  $c_3$, $\ldots$, $c_{n-1}$.
\end{theorem}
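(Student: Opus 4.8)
The plan is to carry out the same sequence of Tietze transformations that produced \thmref{wb3}, starting from the presentation of $WB_n'$ (the presentation of $VB_n'$ in \thmref{nsp} together with the six families of relations of \lemref{l5.2}), and then to dispose of the extra generators $g_{m,l}$ with $l\geq 4$ by one additional inductive step. Concretely, I would first eliminate $b_{m,1}$ and $b_{m,0}$ via the relations $f_m a_m f_{m+1}^{-1} b_{m,1}^{-1}=1$ and $f_m^{-1}f_{m+1}b_{m,0}^{-1}=1$ of $VB_n'$, exactly as in \secref{vb4}; this gives $b_{m,1}=f_m a_m f_{m+1}^{-1}$ and $b_{m,0}=f_m^{-1}f_{m+1}$.

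Next, I would substitute these expressions into the $i=1$ and $i=2$ families of \lemref{l5.2}. Since those relations involve only $c_3$ among the $c_l$, the computation is literally the one performed for $WB_4'$ in the proof of \thmref{wb3}: repeatedly cancelling and invoking $f_m^3=1$, $g_{m,3}^2=1$ and $(f_m g_{m,3})^3=1$, one obtains
$$
a_m = f_m f_{m+1}, \qquad g_{m,3}= f_m^{-1} f_{m-1} c_3^{-1} f_{m-2} c_3 f_{m-1}^{-1}.
$$
These let me eliminate every $a_m$ and every $g_{m,3}$. Then, using $f_m f_{m+1}^{-1}f_{m+2}=f_{m+1}f_{m+2}^{-1}f_{m+3}$, which expresses $f_{m+3}$ (and, solved the other way, $f_{m}$) in terms of three consecutive $f$'s, I can remove all $f_m$ with $m\notin\{0,1,2\}$.

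The only genuinely new point for $n\geq 5$ is the elimination of $g_{m,l}$ for $4\leq l\leq n-1$. For this I would use the relation $g_{m,i}c_{i+1}c_i g_{m+2,i+1}c_i^{-1}c_{i+1}^{-1}=1$ of \lemref{l5.2}; combined with $g_{m,i}^2=1$ it rearranges to
$$
g_{m+2,i+1}=c_i^{-1}c_{i+1}^{-1}\,g_{m,i}\,c_{i+1}c_i .
$$
As $m\mapsto m+2$ runs over all of $\mathbb{Z}$, this writes every $g_{p,i+1}$, $p\in\mathbb{Z}$, as a word in the $c_l$'s and the $g_{m,i}$'s. Inducting on $l$, with base case $l=3$ already settled, I can therefore eliminate $g_{m,l}$ for all $m\in\mathbb{Z}$ and all $4\leq l\leq n-1$. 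After all these eliminations the surviving generators are exactly $f_0,f_1,f_2$ and $c_3,\dots,c_{n-1}$, a total of $3+(n-3)=n$, which proves the theorem.

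The main obstacle is verifying that the $WB_4'$-style computation producing $a_m=f_mf_{m+1}$ and the closed form for $g_{m,3}$ goes through unchanged for $n\geq 5$: one must check that every step in the proof of \thmref{wb3} uses only relations that survive for larger $n$ (namely the $i=1,2$ relations of \lemref{l5.2} together with the $VB_n'$ relations $f_m^3=1$, $g_{m,3}^2=1$, $(f_m g_{m,3})^3=1$), and in particular that nothing special to $n=4$ was tacitly used. Granting this, the inductive elimination of the higher $g_{m,l}$ is a short, self-contained addition.
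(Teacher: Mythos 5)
Your proposal is correct and follows essentially the same route as the paper: the authors likewise reduce to the $WB_4'$ computation to obtain $a_m=f_mf_{m+1}$ and the closed form for $g_{m,3}$, and then use the relation $g_{m,i}c_{i+1}c_ig_{m+2,i+1}c_i^{-1}c_{i+1}^{-1}=1$ to express the $g_{m,l}$ with $l\geq 4$ in terms of $g_{m,3}$ and the $c_l$'s. Your explicit rearrangement $g_{m+2,i+1}=c_i^{-1}c_{i+1}^{-1}g_{m,i}c_{i+1}c_i$ (using $g_{m,i}^2=1$) and the induction on $l$ just make precise what the paper states in one line.
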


\begin{proof}
As we proved before,
 $VB_n'$, $n \geq 5$, is generated by elements
$c_3, \ldots, c_{n-1}$, $f_0$, $f_1$, $f_2$, $g_{0,3}, \ldots, g_{0,n-1}$.

The group $WB_n'$, $n \geq 5$,  is defined by relations of  $VB_n'$ and the relations:
$$
b_{m,1} a_{m+1} f_{m+2,1} b_{m,0}^{-1} = 1,
$$
$$
b_{m,0}  f_{m+2,0} a_{m+1}^{-1}  b_{m,1}^{-1} = 1,
$$
$$
f_{m,0} c_3 b_{m+1,1} g_{m+2,3} b_{m+1,0}^{-1} c_3^{-1} = 1,
$$
$$
f_{m,1} c_3 f_{m+1,0} a_{m+1} g_{m+2,3} b_{m+1,1}^{-1} c_3^{-1} = 1.
$$
$$
g_{m,i} c_{i+1} c_i g_{m+2,i+1} c_i^{-1} c_{i+1}^{-1} = 1,
$$
$$
g_{m,i} c_{i+1} a_m^{-1} c_i a_{m+1} g_{m+2,i+1} a_{m+1}^{-1} c_i^{-1} a_m c_{i+1}^{-1} = 1.
$$

Similar to the group $WB_4'$, the firs for relations are equivalent to the relations
$$
g_{m+2,3} =  f_{m+2}^{-1} f_{m+1} c_3^{-1}f_{m} c_3 f_{m+1}^{-1},\quad
a_{m}=f_{m} f_{m+1}.
$$

Hence, the additional relations of  $WB_n'$, $\geq 5$
have the form
$$
g_{m+2,3} =  f_{m+2}^{-1} f_{m+1} c_3^{-1}f_{m} c_3 f_{m+1}^{-1},
$$
$$
a_{m}=f_{m} f_{m+1},
$$
$$
g_{m,i} c_{i+1} c_i g_{m+2,i+1} c_i^{-1} c_{i+1}^{-1} = 1,
$$
$$
g_{m,i} c_{i+1} f_{m+1}^{-1}f_{m}^{-1} c_i f_{m+1} f_{m+2}
g_{m+2,i+1} f_{m+2}^{-1}f_{m+1}^{-1}  c_i^{-1} f_{m} f_{m+1} c_{i+1}^{-1} = 1.
$$
Using the relations
$g_{m,i} c_{i+1} c_i g_{m+2,i+1} c_i^{-1} c_{i+1}^{-1} = 1$,
we can express the generators $g_{m,i}$, $i\geq 4$, as words in the generators
$c_3,$ $\ldots$, $c_{n-1}$, $f_m$, $g_{m,3}$, $m \in \mathbb{Z} $.
Also, as in the case of the group $WB_4'$, we can express the generators  $f_m$, $g_{m,3}$, $m \in \mathbb{Z} $,
as words in the generators
$c_3$, $f_0$, $f_1$, $f_2$.
\end{proof}

\subsection{Proof of \thmref{wbth}}
\begin{proof}
Parts (1)  of \thmref{wbth} follows by combining \corref{wb1}, \corref{wb2} and  \thmref{wb3}. Part (2) and (3) follow from  \corref{wb11}, and \corref{wb22}.

For $n \geq 5$, note that $WB_n'$
is perfect as a quotient of the perfect group  $VB_n'$. This proves (4).
\end{proof}

\end{document}